\newtheorem{theorem}{Theorem}[section]
\newtheorem{lemma}[theorem]{Lemma}
\newtheorem{proposition}[theorem]{Proposition}
\newtheorem{remark}[theorem]{Remark}
\newenvironment{proof}[1][Proof]{\textbf{#1.} }{\hfill\rule{0.5em}{0.5em}}
{\catcode`\@=11\global\let\AddToReset=\@addtoreset
\AddToReset{equation}{section}

\AddToReset{theorem}{section}

\begin{document}
\title{Global  Calder\'{o}n--Zygmund theory for parabolic $p$-Laplacian system: the case $1<p\leq \frac{2n}{n+2}$ }
\author{
	{\bf Ke Chen,\thanks{E-mail address: kchen18@fudan.edu.cn, Fudan University, 220 Handan Road, Yangpu, Shanghai, 200433, China.}~~Quoc-Hung Nguyen\thanks{E-mail address: qhnguyen@amss.ac.cn, Academy of Mathematics and Systems Science, Chinese Academy of Sciences,
			Beijing 100190, PR China.}~~and~Na Zhao\thanks{E-mail address: zhaona@shufe.edu.cn, School of Mathematics, Shanghai University of Finance and Economics, 777 Guoding Road, Yangpu, Shanghai, 200433, China. }}
	}
\maketitle
\begin{abstract}
	The aim of this paper is to establish global Calder\'{o}n--Zygmund theory 
	to parabolic $p$-Laplacian system: 
	$$
	u_t -\operatorname{div}(|\nabla u|^{p-2}\nabla u) = \operatorname{div} (|F|^{p-2}F)~\text{in}~\Omega\times (0,T)\subset \mathbb{R}^{n+1},
	$$
	proving that
	$$
	F\in L^q\Rightarrow \nabla u\in L^q,
	$$
	for any $q>\max\{p,\frac{n(2-p)}{2}\}$ and $p>1$. Acerbi
	and Mingione \cite{Acerbi07} proved this estimate in the case $p>\frac{2n}{n+2}$.  In this article we settle the case $1<p\leq \frac{2n}{n+2}$. We also treat systems with discontinuous  coefficients having small BMO (bounded mean
	oscillation) norm. 
\end{abstract}
  \section{Introduction and main results}

In this paper, we consider the following inhomogeneous parabolic system:
 \begin{equation}\label{e1}
 	\left\{
 	\begin{array}
 		[c]{ll}%
 		u_t -\operatorname{div}(a(x,t)|\nabla u|^{p-2}\nabla u) = \operatorname{div} (|F|^{p-2}F)&\text{in }\Omega_T,\\
 		u=0&\text{on}\
 		\partial_{p} \Omega_T.
 		\\
 	\end{array}
 	\right.
 \end{equation}
 with a given function $F\in L^p(\Omega_T,\mathbb{R}^{N\times n})$. Here $a(x,t)$ satisfies
 \begin{equation}\label{cona}
 	c_a\leq  a(x,t)\leq C_a, \quad\quad\forall (x,t)\in \Omega_T
 \end{equation}
 for some constants $c_a,C_a>0$. And
 $\Omega_T:=\Omega\times (0,T), T>0$, is a cylindrical domain with parabolic boundary $\partial_p\Omega_T:=(\partial\Omega\times [0,T])\cup (\bar \Omega\times \{0\})$, where $\Omega\subset\mathbb{R}^n$, $n\geq 3$, is a bounded open domain with $C^2$ boundary.
 
 Recall that $u\in C^0(0,T;L^2(\Omega,\mathbb{R}^N))\cap L^p(0,T;W^{1,p}(\Omega,\mathbb{R}^N))$ with $N\geq 1$ is a weak solution to \eqref{e1} if it satisfies
 \begin{align*}
 	\int_{\Omega_T}(u\phi_t-a(x,t)|\nabla u|^{p-2}\nabla u\cdot \nabla \phi-|F|^{p-2}F\cdot \nabla \phi) dxdt=0
 \end{align*}
 for all $\phi\in C_0^1(\Omega_T)$. The  Calder\'{o}n--Zygmund estimate is a classical and interesting problem in partial differential equations. The aim of this type of estimates is to explore whether the solution $u$ is as good integrable as the inhomogeneity $F$, i.e.,
 \begin{align}\label{cz}
 	F\in L^q(\Omega_T)\Rightarrow \nabla u\in L^q(\Omega_T)
 \end{align}
 holds for any $q\geq p$ or not. It is well-known that if $|F|\in L^p(\Omega_T)$, then the equation \eqref{e1} has a unique weak solution $u$ satisfying the following Calder\'{o}n--Zygmund estimate 
 \begin{align}\label{DuLp}
 	\sup_{t\in[0,T]}\|u(\cdot,t)\|_{L^2(\Omega)}+\|\nabla u\|_{L^p(\Omega_T)}\leq C\| F\|_{L^p(\Omega_T)}.
 \end{align}
 It is natural to ask whether we can extend the result to various function spaces. There are many significant results on this topic, including both elliptic and parabolic equations. See for example \cite{Acer05,Acerbi07,Boge,Peral,Iwa,KinZ01,Ming07,Ming10,Hung2,QH1,ThanhHung} and references therein. In this paper, we will present Calder\'{o}n--Zygmund type estimates for weak solutions $u$ to \eqref{e1} in the case $$1<p\leq \frac{2n}{n+2}.$$ Our method can also be applied to obtain Calder\'{o}n--Zygmund estimates for all  $p>\frac{2n}{n+2}$ and $q\geq p$ (see Remark \ref{repl0}).
 
 We first mention some previous results in related directions.
 For the elliptic case with distributional data, we have the full result, i.e. \eqref{cz} holds for any $q\geq p$, $p>1$. The result was essentially obtained by Iwaniec \cite{Iwa} in the scalar
 case ($N=1$) and by DiBenedetto and Manfredi \cite{DiBeM93} in the vectorial case ($N>1$). Kinnunen and Zhou \cite{KinZ99} extended this to elliptic equations with VMO coefficients. Based on the techniques in \cite{Peral}, a general Calderón–Zygmund theory for elliptic equations which admit non-standard growth conditions was obtained by Acerbi
 and Mingione in \cite{Acer05}. For global results with homogeneous
 Dirichlet boundary data, we refer to \cite{KinZ01} for   $C^{1,\alpha}$ domains, and \cite{Byun08} for Reifenberg domains. 
 
 We also mention some significant results concerning elliptic $p$-Laplacian type equations with measure data. 
 Mingione \cite{Ming07} established a comparison estimate and applied it for $p\geq 2$ to obtain the regularity for gradients of solutions. Then in \cite{Ming10}, Mingione introduced the idea of using fractional maximal functions of order 1 to obtain a certain level set decay lemma that involves the Hardy-Littlewood maximal function of the gradient and the fractional maximal function of the datum. This lemma yields a local $L^q$ gradient bound in the case $p\geq 2$. The comparison estimates established in \cite{Duzamin2} extended the result to the case $2-\frac{1}{n}<p<2$. In all the above works, the comparison estimates can not be applied to the range $1<p\leq 2-\frac{1}{n}$, in which case the distributional solutions 
 may not even belong to $W^{1,1}_{loc}$. 
 More recently, based on various
 tools developed for quasilinear equations with measure data and linear or nonlinear
 potential and Calder\'{o}n--Zygmund theories, (see \cite{Byun11,Duzamin1,Ming07,Ming10,55Ph2}), Phuc and the second author \cite{QH4} (see also \cite{HungPhuc3,HungPhuc4}) dealt with the singular case $\frac{3n-2}{2n-1}<p\leq 2-\frac{1}{n}$ for the first time. The key ingredients are some new local comparison estimates. Later they extended the result to the strongly singular case $1<p\leq\frac{3n-2}{2n-1}$ in \cite{QHPhuc}, eventually fulfilled the range $p>1$. 
 
 The problem in parabolic setting is more complicated. The main obstruction is the non-homogeneous scaling behavior of the problem with respect
 to space and time in the case $p\neq 2$ (then the equation is nonlinear), in the sense that the solution multiplied by a constant
 is in general not anymore a solution, even when $F=0$. The first result in this direction was obtained
 by Misawa \cite{Misa05} assuming that $F\in L^\infty$. In \cite{Acerbi07}, Acerbi and Mingione considered the equation \eqref{e1} with $\frac{2n}{n+2}<p<\infty$, and coefficient $a(x,t)\in VMO$ satisfying \eqref{cona}. They proved that if $|F|^p\in L^q(\Omega_T) $ for $q>1$, then $|\nabla u|^p\in L^q_{loc}(\Omega_T)$. The main idea to overcome the difficulties was to
 use the intrinsic geometry  for the upper-level sets of $|\nabla u|$ (which was introduced by DiBenedetto and Friedman \cite{ DiBenedetto1,DiBenedetto2,DiBenedetto3}) together with
 a maximal function free approach (see also \cite{Acer05}). Duzaar et al.\cite{Duzaar11} employed and partially modified the ideas to obtain the result for more general nonlinear systems. In \cite{Byun11}, the global Calder\'{o}n--Zygmund theory was investigated for a more general system. Considering the nonlinearity to have a small $BMO$ norm, and $\Omega$ is flat in
 Reifenberg’s sense, they obtain that  if $|F|^p\in L^q(\Omega_T) $ for $q>1$, then $|\nabla u|^p\in L^q(\Omega_T)$. Authors in \cite{Bui17,Bui17b} extended the results to more general setting of Lorentz spaces.  
 We also infer interested readers to \cite{55DuzaMing,KM,KM2,KM3,KM4,KM5,QH1} for potential estimates for the spatial gradient
 of solutions. 
 
  However, to the best of our knowledge, there are no such Calder\'{o}n--Zygmund estimates in the case $1<p\leq\frac{2n}{n+2}$. The main strategy to prove such estimates is combining higher regularity of the solution to the homogeneous system ($F=0$) with proper comparison estimates.  The range $p>\frac{2n}{n+2}$ is always unavoidable because of the structure of the homogeneous system. Take the classical $p$-Laplacian ($a(x,t)=1$) as an example. DiBenedetto and Friedman proved in \cite{DiBenedetto2} that for any weak solution $w $ to the homogeneous system, there holds
 $$
 \nabla w \in L^p \Rightarrow \nabla w \in L^\infty, \quad \text{if}\quad p>\frac{2n}{n+2}.
 $$
 They also proved the H\"{o}lder regularity for $\nabla w$. As shown by counterexamples in \cite{DiBenedetto1}, the above is not true for $1<p\leq \frac{2n}{n+2}$. 
 Choe  \cite{Choe1} extended the result in \cite{DiBenedetto2}, he proved that $\nabla w\in L^\infty$ provided
 $w\in L^r_{loc}$ for some $r>\frac{n(2-p)}{p}$. When $p> \frac{2n}{n+2}$, the integrability  $u\in L^r_{loc}$ is implicit in the notion of weak solution. While for $p\leq \frac{2n}{n+2}$, one needs to impose the integrability condition to obtain the boundedness of the solution. The integrability condition (as well as the range $p>\frac{2n}{n+2}$) is sharp as shown by counterexamples in \cite{DiBenedetto1} (see also \cite{Choe1,Choe2,DiBenedetto2,DiBenedetto3,Duzamin2} for more discussion). 
 
 In this paper, we will consider system \eqref{e1} with $1<p\leq\frac{2n}{n+2}$ and prove that  for any $q>\frac{n(2-p)}{2}$, $\nabla u\in L^q(\Omega_T)$ if $F\in L^q(\Omega_T)$ and $[a]_{R_0}\leq \delta_0$ for some $\delta_0, R_0>0$. Here we define the norm 
 \begin{equation*}
 	[a]_{r_0}:=\sup_{t\in [0,T]}\sup_{x\in \Omega, 0<r\leq r_0}\fint_{B_r(x)}\left| a(y,t)-(a(t))_{B_{r}(x)}\right|dx,
 \end{equation*}
 where $(a(t))_{B_{r}(x)}=\frac{1}{|B_{r}(x)|}\int_{B_{r}(x)} a(y,t) dy.$
 Note that $\frac{n(2-p)}{2}\geq p$ in our case. The key point is that we establish new comparison estimates to improve the integrability of the solution to the homogeneous system (see Lemma \ref{lemw}).
 
 We introduce some notations that will be used throughout the paper. We use the notation $X\lesssim Y$, which means that there exists an absolute constant $C>0$ such that $X\leq CY$. For any $\lambda, R>0$, we define the cylinder $Q_{R}^\lambda(x,t)=B_R(x)\times (-\lambda^{p-2}R^2+t,t)$ and extend $u=0$ for any $t<0$.  For simplicity in later statement, we introduce the function 
 \begin{equation}\label{formularG}
 	G_{\gamma}(s)=s^{\gamma}+s^{\frac{1}{\gamma}},\quad\quad \gamma\in(0,1).
 \end{equation}
 We are now ready to state the main result in this paper.
 \begin{theorem}\label{thm0}
 	Let $u$ be a weak solution of \eqref{e1}.   Then there exists $R_0>0$ small depending on $\partial\Omega$ such that for any $t_0\in[0,T]$, and  any $x_0,R$ satisfying
 	$$B_{4R}(x_0)\subset \Omega, \quad\quad\text{or}\quad\quad x_0\in \partial \Omega\  \text{and}\  0<R\leq  R_0,$$
 	there exists $\mathbf{h}\in (L^{\infty}(Q_{R/2}^\lambda(x_0,t_0)))^{N\times n}$ such that  for any $0<\eta,\kappa_0<1$ and $\frac{n(2-p)}{2}<\sigma_1<n+p$, there holds
 	\begin{align*}
 		\lambda\|\mathbf{h}\|_{L^\infty(Q_{R/2}^\lambda(x_0,t_0))}\leq&
 		C G_{\gamma_0}\left(\left(\fint_{Q_{2R}^\lambda(x_0,t_0)}|\lambda\nabla u|^{\sigma_1} \right)^{\frac{1}{\sigma_1}}\right)+ C(\eta,\kappa_0)G_{\gamma_0}\left(\left(\fint_{Q_{2R}^\lambda(x_0,t_0)}|\lambda F|^{\sigma_1} \right)^{\frac{1}{\sigma_1}}\right)\\
 		&\quad\quad\quad+\eta G_{\gamma_0}\left(\left(\fint_{Q_{2R}^\lambda(x_0,t_0)}|\lambda \nabla u|^{\sigma_1+\kappa_0}\right)^\frac{1}{\sigma_1+\kappa_0}\right)\nonumber,
 	\end{align*}
 	and
 	\begin{align*}
 		&\left(\fint_{Q_{R/2}^\lambda(x_0,t_0)}|\lambda(\nabla u-\mathbf{h})|^{\sigma_1} \right)^{\frac{1}{\sigma_1}}\\
 		&\quad\quad\leq \eta G_{\gamma_0}\left(\left(\fint_{Q_{2R}^\lambda(x_0,t_0)}|\lambda\nabla u|^{\sigma_1+\kappa_0} \right)^{\frac{1}{\sigma_1+\kappa_0}}\right) + C(\eta,\kappa_0)G_{\gamma_0}\left(\left(\fint_{Q_{2R}^\lambda(x_0,t_0)}|\lambda F|^{\sigma_1} \right)^{\frac{1}{\sigma_1}}\right)
 	\end{align*}
 	for some $\gamma_0\in (0,1)$, provided $\nabla u\in L^{\sigma_1+\kappa_0}(Q_{2R}^\lambda(x_0,t_0))$, $F\in L^{\sigma_1}(Q_{2R}^\lambda(x_0,t_0))$ and $[a]_{2R}\leq \eta^{\frac{n^4}{\kappa_0^4(p-1)^4}}$. 
 \end{theorem}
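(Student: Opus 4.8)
The plan is to run a comparison argument on the intrinsic cylinder $Q_{2R}^\lambda(x_0,t_0)$, passing through two intermediate problems, and to absorb the resulting error terms using the higher-integrability machinery encoded in the function $G_{\gamma_0}$. First I would compare $u$ with the solution $v$ of the homogeneous system (drop $F$) on $Q_{2R}^\lambda$ with the same boundary/initial data as $u$; testing the difference equation with $u-v$ (suitably truncated in time via Steklov averaging) and using the monotonicity of $\xi\mapsto a|\xi|^{p-2}\xi$ gives an energy estimate for $\nabla(u-v)$ in terms of $\fint_{Q_{2R}^\lambda}|F|^p$, which after the scaling by $\lambda$ and a reverse-Hölder/self-improvement step is controlled by $C(\eta,\kappa_0)\,G_{\gamma_0}\big((\fint |\lambda F|^{\sigma_1})^{1/\sigma_1}\big)$. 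Second, I would freeze the coefficient: compare $v$ with the solution $w$ of the constant-coefficient $p$-Laplacian system $w_t-\operatorname{div}((a(t_0))_{B_{2R}}|\nabla w|^{p-2}\nabla w)=0$ on (a slightly smaller cylinder) $Q_R^\lambda$; the difference is driven by $a(x,t)-(a(t))_{B_{2R}}$, so its energy is bounded by $[a]_{2R}$ times $\fint|\lambda\nabla v|^p$, which — using the quantitative smallness hypothesis $[a]_{2R}\le \eta^{n^4/(\kappa_0^4(p-1)^4)}$ together with a self-improving higher-integrability exponent gain $\kappa_0$ — is absorbed into $\eta\,G_{\gamma_0}\big((\fint|\lambda\nabla u|^{\sigma_1+\kappa_0})^{1/(\sigma_1+\kappa_0)}\big)$.

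The third and decisive step is to invoke the gradient regularity for the homogeneous constant-coefficient system in the strongly singular regime, which is exactly the content of Lemma \ref{lemw} (the new comparison/integrability-improvement lemma announced in the introduction): since here $\sigma_1>\frac{n(2-p)}{2}$, i.e. $u$ is assumed integrable above the critical Choe exponent after the intrinsic scaling, $w$ enjoys $\nabla w\in L^\infty_{loc}$ with a quantitative bound $\lambda\|\nabla w\|_{L^\infty(Q_{R/2}^\lambda)}\lesssim G_{\gamma_0}\big((\fint_{Q_R^\lambda}|\lambda\nabla w|^{\sigma_1})^{1/\sigma_1}\big)$ for a suitable $\gamma_0\in(0,1)$ (the two-sided power structure of $G_{\gamma_0}$ is the footprint of the non-homogeneous scaling $\xi\mapsto \lambda\xi$). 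I then set $\mathbf h=\nabla w$. The first asserted inequality follows by the triangle inequality $\lambda\|\mathbf h\|_\infty\le\lambda\|\nabla w\|_\infty$ combined with the $L^{\sigma_1}$-comparison bounds $\|\lambda\nabla(u-v)\|_{L^{\sigma_1}}$, $\|\lambda\nabla(v-w)\|_{L^{\sigma_1}}$ (from steps one and two, after moving from the $L^p$ energy estimates up to $L^{\sigma_1}$ by Gehring's lemma / intrinsic higher integrability), feeding $(\fint|\lambda\nabla w|^{\sigma_1})^{1/\sigma_1}\le (\fint|\lambda\nabla u|^{\sigma_1})^{1/\sigma_1}+\text{(errors)}$ into the monotone, subadditive-up-to-constants function $G_{\gamma_0}$. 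The second asserted inequality is $\|\lambda(\nabla u-\mathbf h)\|_{L^{\sigma_1}}\le \|\lambda\nabla(u-v)\|_{L^{\sigma_1}}+\|\lambda\nabla(v-w)\|_{L^{\sigma_1}}$, where the $F$-term collects into $C(\eta,\kappa_0)G_{\gamma_0}(\cdots F\cdots)$ and the coefficient-term into $\eta\,G_{\gamma_0}\big((\fint|\lambda\nabla u|^{\sigma_1+\kappa_0})^{1/(\sigma_1+\kappa_0)}\big)$.

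The main obstacle I anticipate is step three together with the bookkeeping of scaling: in the singular range $1<p\le\frac{2n}{n+2}$ the equation is not scale-invariant, so all comparison estimates must be carried out on intrinsic cylinders $Q^\lambda_R$ and every quantity must be tracked in the rescaled variable $\lambda\nabla u$; controlling the Choe-type $L^\infty$ bound requires the a priori assumption $\nabla u\in L^{\sigma_1+\kappa_0}$ (to have room for the reverse-Hölder self-improvement that produces the exponent $\sigma_1$ from $p$, and to justify the interpolation that yields the $\eta$–$C(\eta,\kappa_0)$ splitting), and one must verify that the admissible exponent window $\frac{n(2-p)}{2}<\sigma_1<n+p$ is preserved under all these manipulations. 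A secondary technical point is the boundary case $x_0\in\partial\Omega$: there the comparison solutions $v,w$ must be built with zero lateral data and the $C^2$-regularity of $\partial\Omega$ (flattening, or the smallness radius $R_0$) is used to transfer the interior $L^\infty$-gradient estimate up to the boundary, which is where the dependence of $R_0$ on $\partial\Omega$ enters. Apart from these, the remaining estimates — the Caccioppoli/energy inequalities, Gehring's lemma, and the algebraic manipulation of $G_{\gamma_0}$ — are routine.
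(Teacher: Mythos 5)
Your high-level strategy (compare $u$ with a solution $w$ of a frozen-coefficient homogeneous problem, invoke singular $p$-Laplacian regularity for $w$, set $\mathbf h=\nabla w$) is the right skeleton, and you correctly identify the scaling and boundary bookkeeping issues. But there is a genuine gap at the decisive step. You propose to pass from the $L^p$ energy comparison of $\nabla(u-w)$ to an $L^{\sigma_1}$ gradient comparison ``by Gehring's lemma / intrinsic higher integrability,'' and then to control $\big(\fint|\lambda\nabla w|^{\sigma_1}\big)^{1/\sigma_1}$ by $\big(\fint|\lambda\nabla u|^{\sigma_1}\big)^{1/\sigma_1}$ plus errors, before feeding this into the Moser-type $L^\infty$ bound (Lemma~\ref{z}). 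This will not close: Gehring only upgrades $p$ to $p+\varepsilon$ for a small $\varepsilon$ that is nowhere near the gap to $\sigma_1>\tfrac{n(2-p)}{2}\ge p$, and in the range $1<p\le\tfrac{2n}{n+2}$ there is no gradient comparison estimate in $L^{\sigma_1}$ available — establishing one would essentially require solving the problem one is trying to solve. Your argument silently needs $\|\lambda\nabla(u-w)\|_{L^{\sigma_1}}$ controlled by $\|F\|_{L^{\sigma_1}}$ and a small multiple of $\|\nabla u\|_{L^{\sigma_1+\kappa_0}}$, and no such bound is established or is standard.

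The paper sidesteps this by never comparing the \emph{gradients} above the $L^p$ level. Instead, Lemma~\ref{u-w}(2) compares the \emph{functions} $u-w$ in a mixed $L^\sigma_x L^2_t$ norm (a test-function argument with $|u-w|^\vartheta(u-w)$ and interpolation with the energy), and Lemma~\ref{lemw} is a new Caccioppoli-type estimate that bounds $\fint_{Q_R^\lambda}(\lambda|\nabla w|)^{\beta_1}$ by the quantity $A$ — a norm of the \emph{solution} $w-w_{2R}$, not of $\nabla w$ — together with the mere $L^p$ norm of $\nabla w$. Only then does Lemma~\ref{z} (Moser iteration) promote $L^{\beta_1}$ to $L^\infty$. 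The quantity $A$ is finally controlled via the new function-level comparison estimate plus Poincar\'e--Sobolev applied to $u$. This detour through norms of $w$ rather than $\nabla w$ is precisely the technical novelty of the paper, and it is not a routine Caccioppoli/Gehring argument; your write-up treats it as routine and in effect presupposes an $L^{\sigma_1}$ gradient comparison that does not exist in this regime. Your two-stage comparison $u\to v\to w$ (dropping $F$, then freezing $a$) versus the paper's single step to $w$ (both errors absorbed into $\mathcal G$) is a harmless cosmetic difference.
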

\begin{remark}
	Note that for $1<p\leq \frac{2n}{n+2}$, we can not replace the $L^{\sigma_1}$ norm of $\nabla u$ and $F$ in the right hand side by the $L^p$ norm, which finally leads to the range $q>\frac{n(2-p)}{2}$ in our final result.
\end{remark}
\begin{remark}\label{reth0}
	 If $p>\frac{2n}{n+2}$, then $\frac{n(2-p)}{2}<p$. Theorem \ref{thm0} holds for $\sigma_1=p$, $\kappa_0=0$ (see Remark \ref{repr1}).
\end{remark}
 Theorem \ref{thm0} implies $L^q$ bound for gradient of solution $u$. 
 \begin{theorem}\label{thm1} Let $q>\frac{n(2-p)}{2}$.
 	Let $u$ be a weak solution of \eqref{e1} with $F\in L^{q}(\Omega_T)$. We can find $\delta_0=\delta_0(n,p,q,\partial\Omega)\in(0,1)$ such that if $[a]_{R_0}\leq \delta_0$ for some $R_0>0$, we have the following estimate:
 	\begin{align*}
 		\int_{\Omega_T}|\nabla u|^{q} \leq C\int_{\Omega_T}|F|^{q}+C\left(\int_{\Omega_T}|F|^p\right)^{\frac{2-p+q}{2}},
 	\end{align*}
 	where $C=C(n,p,q,\Omega,T)\geq 1$ is a constant.
 \end{theorem}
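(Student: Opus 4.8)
The strategy is the standard Calderón--Zygmund machinery adapted to the intrinsic parabolic geometry, bootstrapping from the comparison estimates of Theorem \ref{thm0}. Fix $q>\frac{n(2-p)}{2}$ and pick $\sigma_1$ with $\frac{n(2-p)}{2}<\sigma_1<q$ and $\kappa_0>0$ small so that $\sigma_1+\kappa_0<q$ as well; set $\delta_0=\eta^{n^4/(\kappa_0^4(p-1)^4)}$ where $\eta$ will be chosen small at the end. The first step is to upgrade the a priori energy bound \eqref{DuLp}: since $q>p$, interpolation plus \eqref{DuLp} shows $\nabla u\in L^{\sigma_1+\kappa_0}$ is \emph{not} automatic, so one must instead work with the distribution function of $M(|\nabla u|^{\sigma_1})$, truncating at a large level $\lambda_0$ chosen in terms of $\|F\|_{L^p}$ so that the "tail" beyond $\lambda_0$ is where the intrinsic cylinders live; this is the usual device (cf.\ \cite{Acerbi07,Byun11}) to handle the nonhomogeneous scaling.

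The core step is a good-$\lambda$ / level-set inequality. For $\lambda\gg\lambda_0$ consider the intrinsic super-level set $E(\lambda)=\{(x,t): |\nabla u|>\lambda\}$ and perform a Vitali-type covering of it by intrinsic cylinders $Q_{R/2}^\lambda(x_0,t_0)$ on which the intrinsic relation $\fint_{Q_{2R}^\lambda}|\lambda\nabla u|^{\sigma_1}\sim\lambda^{\sigma_1}\cdot(\text{const})$ holds — here one must distinguish interior cylinders ($B_{4R}\subset\Omega$) from boundary cylinders ($x_0\in\partial\Omega$, $R\le R_0$), which is exactly the dichotomy built into Theorem \ref{thm0}. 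On each such cylinder apply Theorem \ref{thm0} to produce $\mathbf h\in L^\infty$ with $\lambda\|\mathbf h\|_\infty\lesssim \lambda$ and the smallness of $\fint|\lambda(\nabla u-\mathbf h)|^{\sigma_1}$ controlled by $\eta\,G_{\gamma_0}$ of a slightly higher exponent of $\nabla u$ plus $C(\eta,\kappa_0)G_{\gamma_0}$ of $F$. Because $\mathbf h$ is bounded by a multiple of $\lambda$, the set where $|\nabla u|>A\lambda$ inside the cylinder is contained in $\{|\nabla u-\mathbf h|\gtrsim\lambda\}$, whose measure is estimated by Chebyshev from the comparison estimate. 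Summing over the covering yields
\begin{equation*}
	|E(A\lambda)\cap\Omega_T|\;\le\; \varepsilon(\eta)\,|E(c\lambda)\cap\Omega_T| \;+\; C(\eta)\,\bigl|\{|F|>c'\lambda\}\cap\Omega_T\bigr| \;+\;(\text{terms from the }\sigma_1+\kappa_0\ \text{exponent}),
\end{equation*}
where $\varepsilon(\eta)\to0$ as $\eta\to0$. The presence of the $G_{\gamma_0}$ nonlinearity (from the singular scaling) means the level-set inequality is not homogeneous of degree one in $\lambda$, so one cannot directly integrate $\lambda^{q-1}$ against it; instead one iterates the inequality finitely many times, or equivalently works with the rescaled quantity so that $G_{\gamma_0}(\lambda)$ behaves like a power, producing the two terms $\int|F|^q$ and $\bigl(\int|F|^p\bigr)^{(2-p+q)/2}$ in the conclusion — the second exponent $\frac{2-p+q}{2}$ is precisely what comes out of the $s^{1/\gamma_0}$ branch of $G_{\gamma_0}$ combined with $\lambda_0\sim\|F\|_{L^p}^{\text{something}}$.

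The last step is to remove the qualitative hypothesis $\nabla u\in L^{\sigma_1+\kappa_0}_{loc}$ that was needed to even invoke Theorem \ref{thm0}. This is done by a self-improving approximation: first prove the estimate for the truncated/mollified data $F_k=\min(|F|,k)F/|F|$, for which $\nabla u_k\in L^\infty_{loc}$ by the higher regularity discussed in the introduction (Choe's theorem, valid since $u_k\in L^r_{loc}$ for large $r$), obtain the bound uniformly in $k$, and pass to the limit using \eqref{DuLp} and the uniqueness of weak solutions. The main obstacle, I expect, is precisely the non-scale-invariance: one must choose the truncation level $\lambda_0$, the Vitali radii, and the iteration count in a mutually consistent way so that the intrinsic cylinders genuinely satisfy $-\lambda^{p-2}R^2+t_0>0$ (they fit inside $(0,T)$) and so that the geometric constants do not blow up — this is where the explicit tracking of $\delta_0=\delta_0(n,p,q,\partial\Omega)$ and the quantitative dependence $[a]_{R_0}\le\eta^{n^4/(\kappa_0^4(p-1)^4)}$ from Theorem \ref{thm0} must be carried through carefully, together with a covering argument on $\partial\Omega$ (using the $C^2$ regularity, hence the smallness $R_0$) to globalize from the local/boundary estimates to all of $\Omega_T$.
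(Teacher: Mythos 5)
Your high-level blueprint (good-$\lambda$ inequality derived from Theorem \ref{thm0}, level-set integration, then an approximation argument to remove the qualitative $L^{\sigma_1+\kappa_0}$ hypothesis) matches the paper's proof, but several of the specific mechanisms you invoke are not what the paper does and would not obviously work as stated. The most significant issue is your treatment of the $G_{\gamma_0}$ nonlinearity: the paper neither iterates the good-$\lambda$ inequality nor makes $G_{\gamma_0}$ behave like a power. Instead, Lemma \ref{cover} is formulated entirely in terms of the normalized $\lambda$-maximal functions $\lambda(\mathbf{M}^\lambda(|\nabla u|^{\sigma_1}))^{1/\sigma_1}$, $\lambda(\mathbf{M}^\lambda(|F|^{\sigma_1}))^{1/\sigma_1}$, and $\lambda(\mathbf{M}^\lambda(|\nabla u|^{\sigma_1+\kappa_0}))^{1/(\sigma_1+\kappa_0)}$, and the contradiction hypothesis in the covering argument forces each of these to be $\leq 1$, $\leq \delta$, or $\leq\varepsilon^{-1}$ at a nearby point; thus $G_{\gamma_0}$ is always evaluated at bounded arguments and contributes only constants $1$, $\delta^{\gamma_0}$, $\varepsilon^{-1/\gamma_0}$. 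The restriction $\lambda({\bf M}^\lambda(|\nabla u|^{\sigma_1+\kappa_0}))^{1/(\sigma_1+\kappa_0)}\leq\varepsilon^{-1}$ is the device $U_{\lambda,\varepsilon}$ that makes the extra exponent $\sigma_1+\kappa_0$ from Theorem \ref{thm0} manageable, and this requires $q>\sigma_1+\kappa_0$; you flag this as ``terms from the $\sigma_1+\kappa_0$ exponent'' but the concrete fix is worth naming since it dictates the order in which $\eta,\delta,\varepsilon$ are fixed ($\eta$ chosen so $\eta\varepsilon^{-1/\gamma_0}$ is small, then $\delta$, then $\delta_1=\eta^{n^4/(\kappa_0^4(p-1)^4)}$).

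Second, you misattribute the source of the term $\bigl(\int|F|^p\bigr)^{(2-p+q)/2}$. It does not come from the $s^{1/\gamma_0}$ branch of $G_{\gamma_0}$. The good-$\lambda$ lemma only holds for $\lambda\leq\lambda_0$, and $\lambda_0$ is determined by $\|\nabla u\|_{L^{\sigma_1}(\Omega_T)}$ (not $\|F\|_{L^p}$) via the measure condition needed for the Krylov--Safonov covering lemma (Lemma \ref{lem-eo}). Setting $\alpha_0=\lambda_0^{-1}$, the integral $\int_{\Omega_T}|\nabla u|^q$ splits at $\alpha_0$; the low part $I_1$ (which lies outside the reach of the good-$\lambda$ lemma) is controlled by $\alpha_0^{q-p}\int|\nabla u|^p$, and only after interpolating $\|\nabla u\|_{L^{\sigma_1}}$ between $\|\nabla u\|_{L^p}$ and $\|\nabla u\|_{L^q}$, applying Young's inequality, and invoking \eqref{DuLp} does one obtain the exponent $(2-p+q)/2$ in terms of $\|F\|_{L^p}$. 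Finally, the approximation in Step 2 is by mollification of both $a$ and $F$ in space (not truncation of $|F|$), with $\nabla u_\kappa\in L^\infty$ deduced from classical regularity theory for the smooth-coefficient problem, followed by uniqueness in the energy class and Fatou; the reference to Choe's theorem is not needed and would not directly apply, since the relevant a priori integrability must be handled through the smooth approximation rather than through an $L^r$ assumption on $u$ itself.
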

 \begin{remark}
 	Note that for $q>\frac{n(2-p)}{2}$, there holds $	\frac{p(2-p+q)}{2}<q$. The above is the first  homogeneous result, in the sense that  $u=0$ if the data $F=0$. 
 \end{remark}
\begin{remark}\label{repl0}
	If $p>\frac{2n}{n+2}$, then Theorem \ref{thm1} holds for any $q\geq p$ (see Remark \ref{repl}).
\end{remark}
 Now, we give the sketch of proof. We first obtain a new comparison estimate on intrinsic parabolic cylinders $Q_{2R}^\lambda$, see Lemma \ref{u-w} and Lemma \ref{u-wbdry} for interior and boundary cases, respectively. These estimates work well in the case $1<p\leq \frac{2n}{n+2}$ and help us to improve the regularity of the solution to the homogeneous system. Moreover, the exponent of the norm of $F$ and $\nabla u$ can be close to $\frac{n(2-p)}{2}$ (see Remark \ref{mixcomparison}), which helps us to get sharp estimates in Proposition \ref{prop1} and Proposition \ref{prop22}. After that, we define the intrinsic $\lambda$-maximal function $\mathbf{M}^\lambda f$ in \eqref{Mlamb}. With new comparison estimates in hand, using the idea in \cite{QH4,QHPhuc}, we apply Theorem \ref{thm0} to obtain the ``good-$\lambda$" inequality, which implies the final result.

 This paper is organized as follows. In Section 2, we prove Theorem \ref{thm0} by deriving the comparison estimates and improving the regularity of solutions to the homogeneous system, in both interior and boundary cases. In Section 3, we apply Theorem \ref{thm0} to prove the ``good-$\lambda$" inequality (Lemma \ref{cover}) and finish the proof of Theorem \ref{thm1}.

\section{Proof of Theorem \ref{thm0}}
\subsection{Interior estimates}
For each ball $B_{2R}(x_0)\subset\subset\Omega$, one considers the unique solution
\begin{equation*}
	w\in L^p((-\lambda^{p-2}(2R)^2+t_0,t_0), W_{0}^{1,p}(B_{2R}(x_0)))+u
\end{equation*}
to the following equation
\begin{equation}\label{e2}
	\left\{
	\begin{array}
		{ll}%
		w_t -\operatorname{div}((a(t))_{B_{2R}(x_0)}|\nabla w|^{p-2}\nabla w) =0&\text{in }\ Q_{2R}^\lambda(x_0,t_0),\\
		w=u&\text{on}\
		\partial_{p} Q_{2R}^\lambda(x_0,t_0).
		\\
	\end{array}
	\right.
\end{equation}
Here and thoughout the paper, we define \[(a(t))_{B_{2R}(x_0)}=\frac{1}{|B_{2R}(x_0)|}\int_{B_{2R}(x_0)} a(x,t) dx.\]
For simplicity, we will always assume $(x_0,t_0)=(0,0)$ in our proof, and denote 
$$Q_{R}^\lambda=Q_{R}^\lambda(0,0),\quad\quad B_R=B_R(0),$$
for any $\lambda,R>0$. 
The following lemma gives an estimate for $\nabla u-\nabla w$ in $Q^\lambda_{2R}(x_0,t_0)$.
\begin{lemma}\label{u-w}
	Let $w$ be a weak solution of \eqref{e2}. For any $\vartheta>\frac{{2n}}{p}-(2+n)$ and any $\eta, \kappa_0\in(0,1)$, 
	let 
	\begin{align*}
		\sigma = p+\vartheta +\frac{p(\vartheta +2)}{n}>\frac{n(2-p)}{p},\ \sigma_1 = p+\frac{n\vartheta }{n+\vartheta +2}>\frac{n(2-p)}{2}.
	\end{align*}
	If  $\nabla u\in L^{\sigma_1+\kappa_0}(Q_{2R}^\lambda(x_0,t_0))$, and  $F\in L^{\sigma_1}(Q_{2R}^\lambda(x_0,t_0))$, and
	\begin{equation}\label{coa}
		[a]_{2R}\leq \eta^{\frac{n^4}{\kappa_0^4(p-1)^4}} ,
	\end{equation}  then we have\\
	(1) 
		\begin{equation}\label{lem1-case3}
		\begin{split}
			& \left(\fint_{Q_{2R}^\lambda(x_0,t_0)} |\lambda \nabla(u-w)|^p\right)^{\frac{1}{p}}
			\leq C(\eta)\left(\fint_{Q_{2R}^\lambda(x_0,t_0)}  |\lambda  F|^{p}\right)^\frac{1}{p}+\eta\left(\fint_{Q_{2R}^\lambda(x_0,t_0)}|\lambda \nabla u|^{\sigma_1}\right)^\frac{1}{\sigma_1}.
		\end{split}
	\end{equation}	
	(2)
	\begin{equation}\label{lem1-case1}
	\begin{split}
		& \left[\fint_{t_0-\lambda^{p-2}
			(2R)^2}^{t_0}\left(\fint_{B_{2R}(x_0)}
		\left(\frac{\lambda}{R}|u-w|\right)^{\sigma}dx\right)^\frac{2}{\sigma}dt\right]
		^\frac{1}{2}\\
		&\quad\quad\leq
		C(\eta,\kappa_0)\left(\fint_{Q_{2R}^\lambda(x_0,t_0)}|\lambda F|^{\sigma_1}\right)^{\frac{n+p}{\sigma_1(n+2)}}+
		\eta\left(\fint_{Q_{2R}^\lambda(x_0,t_0)}|\lambda \nabla u|^{\sigma_1+\kappa_0}\right)^{\frac{n+p}{(\sigma_1+\kappa_0)(n+2)}}.
	\end{split}
\end{equation}
\end{lemma}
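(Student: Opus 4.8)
The plan is to test the difference equation for $u-w$ against a suitable multiple of $u-w$ itself (a Steklov-averaged version, to make the time derivative rigorous), exploiting the monotonicity of the vector field $\xi\mapsto |\xi|^{p-2}\xi$. Writing $(a(t))_{B_{2R}}$ for the frozen coefficient, $u-w$ solves, weakly on $Q_{2R}^\lambda$,
\[
(u-w)_t-\operatorname{div}\big((a(t))_{B_{2R}}(|\nabla u|^{p-2}\nabla u-|\nabla w|^{p-2}\nabla w)\big)=\operatorname{div}\big(|F|^{p-2}F\big)+\operatorname{div}\big((a(x,t)-(a(t))_{B_{2R}})|\nabla u|^{p-2}\nabla u\big),
\]
with $u-w=0$ on $\partial_p Q_{2R}^\lambda$. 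Testing with $u-w$, the parabolic term gives $\sup_t \tfrac12\int_{B_{2R}}|u-w|^2$, and the elliptic term produces, in the singular range $1<p<2$, the quantity $\int (|\nabla u|+|\nabla w|)^{p-2}|\nabla(u-w)|^2$ after the standard monotonicity inequality. The right-hand side is handled by Young's inequality: the $F$ contribution is absorbed into $C(\eta)\fint|\lambda F|^p$ plus a small multiple of $\fint|\lambda\nabla(u-w)|^p$, while the coefficient-oscillation term is estimated by Hölder using $[a]_{2R}$ small together with the higher integrability $\nabla u\in L^{\sigma_1+\kappa_0}$ — this is where the hypothesis \eqref{coa} with its peculiar exponent $n^4/(\kappa_0^4(p-1)^4)$ enters, to beat the loss coming from self-improving (Gehring) estimates for $\nabla w$ and from interpolating $|\nabla u|^{p-1}\in L^{(\sigma_1+\kappa_0)/(p-1)}$ against $L^{\sigma_1}$. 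To turn $\int(|\nabla u|+|\nabla w|)^{p-2}|\nabla(u-w)|^2$ back into $\fint|\lambda\nabla(u-w)|^p$ one uses Hölder with exponents $2/p$ and $2/(2-p)$, paying a factor $\big(\fint(|\nabla u|+|\nabla w|)^p\big)^{(2-p)/p}$; the energy estimate for $w$ bounds $\fint|\nabla w|^p\lesssim\fint|\nabla u|^p+\fint|F|^p$, so this factor is controlled by $\fint|\lambda\nabla u|^{\sigma_1}$ and $\fint|\lambda F|^{\sigma_1}$ (after Jensen, since $\sigma_1\ge p$), and after a further Young's inequality with a carefully chosen split one arrives at \eqref{lem1-case3}. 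The scaling is kept consistent throughout by the factor $\lambda$ and the intrinsic cylinder $Q_{2R}^\lambda$, which is precisely designed so that $\lambda^{p-2}(2R)^2$ is the correct time-length and all terms are homogeneous of the same degree under the anisotropic $p$-parabolic scaling.

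For part (2), the plan is to upgrade the energy bound on $u-w$ to a mixed space–time Lebesgue bound via a parabolic Gagliardo–Nirenberg–Sobolev inequality. One combines $\sup_t\int_{B_{2R}}|u-w|^2$ with $\int_{Q_{2R}^\lambda}|\nabla(u-w)|^p$ (both already produced in part (1)) to interpolate: the standard parabolic embedding gives, for $u-w$ vanishing on the lateral boundary, control of $\|u-w\|$ in $L^{\sigma}_x L^{2}_t$-type norms with $\sigma=p+\vartheta+p(\vartheta+2)/n$, the Sobolev-conjugate-type exponent matched to the energy pair $(2,p)$ by the choice of $\vartheta$. The left-hand side of \eqref{lem1-case1} is exactly this norm with the scaling-correct weight $\lambda/R$. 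Plugging in the bounds from part (1) — but now only the $L^{\sigma_1}$ pieces, raised to the power $(n+p)/(\sigma_1(n+2))$ dictated by the interpolation weights — yields \eqref{lem1-case1}; the appearance of $\sigma_1$ (rather than $p$) on the right is forced because in the strongly singular range $p\le 2n/(n+2)$ the factor $\big(\fint(|\nabla u|+|\nabla w|)^p\big)^{(2-p)/p}$ from the monotonicity Hölder step cannot be dropped and must be absorbed into an $L^{\sigma_1}$ norm of $\nabla u$, which is the whole point of the lemma and the reason the final theorem has the threshold $q>n(2-p)/2$ rather than $q\ge p$.

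The main obstacle I anticipate is the bookkeeping in the singular monotonicity step: keeping the weight $(|\nabla u|+|\nabla w|)^{p-2}$ under control while extracting a genuine $L^p$ norm of $\nabla(u-w)$, without letting the auxiliary factor $\big(\fint(|\nabla u|+|\nabla w|)^p\big)^{(2-p)/p}$ blow up the estimate. The remedy is to allow that factor to be measured in the larger exponent $\sigma_1$ (legitimate by Jensen since $\sigma_1\ge p$), and to use the Gehring-type higher integrability of $\nabla w$ — here one must verify that the reverse-Hölder gain for $w$, applied on the intrinsic cylinder with the frozen coefficient, pushes $\nabla w$ into $L^{p+\varepsilon}$ with $\varepsilon$ depending only on $n,p$, and that $\varepsilon$ combined with the smallness $[a]_{2R}\le\eta^{n^4/(\kappa_0^4(p-1)^4)}$ suffices to absorb the coefficient term; tracking this dependence is what produces the explicit (admittedly non-sharp) exponent in \eqref{coa}. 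A secondary technical point is justifying the use of $u-w$ as a test function despite only $C^0_tL^2_x\cap L^p_tW^{1,p}_x$ regularity — this is standard via Steklov averaging and a limiting argument, and I would relegate it to a remark.
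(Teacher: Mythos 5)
Your outline for part (1) is essentially the paper's argument: test with $u-w$, use monotonicity of $\xi\mapsto|\xi|^{p-2}\xi$, and convert the weighted energy $\int(|\nabla u|+|\nabla w|)^{p-2}|\nabla(u-w)|^2$ into an $L^p$ bound via H\"older (the paper uses the pointwise inequality $|\nabla\mathbf f|^p\le g+g^{p/2}\min\{|\nabla u|,|\nabla w|\}^{p(1-p/2)}$ and then H\"older, which is the same calculation). One small inaccuracy: the paper does not invoke any Gehring or reverse-H\"older improvement of $\nabla w$ here; the coefficient-oscillation term is handled directly by H\"older against the assumed $L^{\sigma_1}$ integrability of $\nabla u$, with $[a]_{2R}$ entering linearly.

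For part (2) there is a genuine gap. You propose to combine $\sup_t\int_{B_{2R}}|u-w|^2$ with $\int_{Q^\lambda_{2R}}|\nabla(u-w)|^p$ ``both already produced in part (1)'' and then read off $\|u-w\|_{L^\sigma_xL^2_t}$ with $\sigma=p+\vartheta+\frac{p(\vartheta+2)}{n}>\frac{n(2-p)}{p}$ by ``the choice of $\vartheta$''. But $\vartheta$ is not a free interpolation parameter here: the parabolic Sobolev embedding fed with the $(2,p)$-energy pair gives $u-w\in L^{\frac{p(n+2)}{n}}(Q^\lambda_{2R})$ (and the corresponding $L^r_xL^2_t$ with $r=\frac{4n}{4n-pn-2p}$), and one checks that for $1<p\le\frac{2n}{n+2}$ both of these exponents lie \emph{at or below} the threshold $\frac{n(2-p)}{p}$. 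So the $(2,p)$-energy pair alone cannot produce the $L^\sigma$ bound you need, precisely because we are in the singular range. The missing idea — and the central mechanism of the lemma — is to test the difference equation not with $u-w$ but with $|u-w|^\vartheta(u-w)$ for a genuine $\vartheta>\frac{2n}{p}-(n+2)$ (using the positive definiteness of $Id+\vartheta\frac{\mathbf f\otimes\mathbf f}{|\mathbf f|^2}$). This yields the higher-order energy pair $\sup_t\int|u-w|^{\vartheta+2}$ and $\int|u-w|^\vartheta|\nabla(u-w)|^p$, to which a Sobolev-plus-interpolation chain applies and gives $\int_{Q^\lambda_{2R}}|u-w|^\sigma\lesssim\left(\sup_t\int|u-w|^{\vartheta+2}\right)^{p/n}\int|u-w|^\vartheta|\nabla(u-w)|^p$, with $\sigma$ as stated; since $\sigma\ge 2$ in the singular range, H\"older then gives the $L^\sigma_xL^2_t$ form. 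Absorbing the right-hand side $E_1$, $E_2$ terms (involving the datum, the coefficient oscillation, and the extra $|\nabla u|^{p-1}$ coming from the monotonicity defect) against $\|u-w\|_{L^\sigma}$ is then a careful but elementary H\"older–Young bookkeeping, and this is where the exponent $\sigma_1=\frac{n\vartheta}{n+\vartheta+2}+p$ and the smallness condition \eqref{coa} enter. Without this weighted test function, the argument you sketch simply does not reach the exponent $\sigma>\frac{n(2-p)}{p}$.
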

\begin{proof} For simplicity, let $(x_0,t_0)=(0,0)$.  Set $\mathbf{f}=u-w$. 
	The equation of $\mathbf{f}$ reads as
	\begin{equation}\label{equ-w}
		\mathbf{f}_t-\operatorname{div} ( (a(t))_{B_{2R}}(|\nabla u|^{p-2}\nabla u-|\nabla w|^{p-2}\nabla w))=\operatorname{div} (	\mathcal{G}),
	\end{equation}
	where 
	\begin{equation*}
		\mathcal{G}=|F|^{p-2}F+(a(t)-(a(t))_{B_{2R}})|\nabla u|^{p-2}\nabla u.
	\end{equation*}
We first prove \eqref{lem1-case3}. 	Note that
$$
\big( |\nabla u|^{p-2}\nabla u-|\nabla w|^{p-2}\nabla w\big): \nabla\mathbf{f}\gtrsim |\nabla \mathbf{f}|^2(|\nabla u|+|\nabla w|)^{p-2}.
$$
Denote
\begin{equation*}
	g=|\nabla \mathbf{f}|^{2}(|\nabla u|+|\nabla w|)^{p-2}.
\end{equation*}
Using $\mathbf{f}$ as test function in \eqref{equ-w}, one has
\begin{align*}
	\frac{1}{2}\frac{d}{dt} \int_{B_{2R}} |\mathbf{f}|^2 +c_a\int_{B_{2R}} g  \leq C \int_{B_{2R}} |	\mathcal{G}| |\nabla\mathbf{f}|.	\end{align*}
Integrating in time and using H\"older's inequality we obtain
\begin{align*}
	&\fint_{Q_{2R}^\lambda} g \lesssim \fint_{Q_{2R}^\lambda}  |	\mathcal{G}| |\nabla \mathbf{f}|\lesssim \left(\fint_{Q_{2R}^\lambda}  |	\mathcal{G}|^\frac{p}{p-1}\right)^\frac{p-1}{p}
	\left(\fint_{Q_{2R}^\lambda}  |\nabla \mathbf{f}|^{p}\right)^\frac{1}{p}.
\end{align*}
By \eqref{G} and  H\"older's inequality we have 
\begin{align*}
	\left(\fint_{Q_{2R}^\lambda}  |	\mathcal{G}|^\frac{p}{p-1}\right)^\frac{p-1}{p}
	&\lesssim \left(\fint_{Q_{2R}^\lambda}  |F|^{p}\right)^\frac{p-1}{p}+\left(\fint_{Q_{2R}^\lambda} |\nabla u|^{p}|a(t)-(a(t))_{B_{2R}}|^{\frac{p}{p-1}}\right)^\frac{p-1}{p}\\
	&\lesssim \left(\fint_{Q_{2R}^\lambda}  |F|^{p}\right)^\frac{p-1}{p}+C_a^{1-\frac{
			(\sigma_1-p)(p-1)}{\sigma_1p}}[a]_{2R}^\frac{
		(\sigma_1-p)(p-1)}{\sigma_1p}\left(\fint_{Q_{2R}^\lambda}|\nabla u|^{\sigma_1}\right)^\frac{p-1}{\sigma_1}\\
	&\lesssim  \left(\fint_{Q_{2R}^\lambda}  |F|^{p}\right)^\frac{p-1}{p}+\eta\left(\fint_{Q_{2R}^\lambda}|\nabla u|^{\sigma_1}\right)^\frac{p-1}{\sigma_1}.
\end{align*}
	Observe that
\begin{equation*}
	|\nabla \mathbf{f}|^p\leq g+g^{p/2}\min\{|\nabla u|,|\nabla w|\}^{p(1-p/2)}.
\end{equation*}
By H\"older's inequality
\begin{align*}
	\fint_{Q_{2R}^\lambda} |\nabla\mathbf{f}|^p\leq& \fint_{Q_{2R}^\lambda}g+\left(\fint_{Q_{2R}^\lambda} g \right)^\frac{p}{2}\left(\fint_{Q_{2R}^\lambda}|\nabla u|^p \right)^\frac{2-p}{2} \\
	\lesssim &\left\{\left(\fint_{Q_{2R}^\lambda}  |F|^{p}\right)^\frac{p-1}{p}+\eta\left(\fint_{Q_{2R}^\lambda}|\nabla u|^{\sigma_1}\right)^\frac{p-1}{\sigma_1} \right\}
	\left(\fint_{Q_{2R}^\lambda}  |\nabla \mathbf{f}|^{p}\right)^\frac{1}{p}\\
	&+\left\{\left(\fint_{Q_{2R}^\lambda}  |F|^{p}\right)^\frac{p-1}{2}+\eta^\frac{p}{2}\left(\fint_{Q_{2R}^\lambda}|\nabla u|^{\sigma_1}\right)^\frac{p(p-1)}{2\sigma_1} \right\}
	\left(\fint_{Q_{2R}^\lambda}  |\nabla \mathbf{f}|^{p}\right)^\frac{1}{2}\left(\fint_{Q_{2R}^\lambda}|\nabla u|^p \right)^\frac{2-p}{2}.
\end{align*}
Applying Young's inequality, we have \eqref{lem1-case3}.\vspace{0.3cm}\\
Then we prove \eqref{lem1-case1}. 
	Using $|\mathbf{f}|^{\vartheta}\mathbf{f}$, $\vartheta>0$ as test function in \eqref{equ-w} and doing integration by parts yields
	\begin{equation}\label{intx}
		\begin{split}
			&\frac{d}{dt}\int_{B_{2R}} |\mathbf{f}|^{\vartheta+2} +\int_{B_{2R}}  (|\nabla u|^{p-2} \nabla u-|\nabla w|^{p-2}\nabla w ):\nabla (|\mathbf{f}|^\vartheta\mathbf{f})
			\lesssim  \int_{B_{2R}} |\mathbf{f}|^\vartheta | \nabla\mathbf{f}| |	\mathcal{G}|.
		\end{split}
	\end{equation}
Note that for any matrices $M_1, M_2\in\mathbb{R}^{N\times n}$, one has
\begin{align*}
	||M_1|^{p-2}M_1-|M_1-M_2|^{p-2}(M_1-M_2)|\lesssim |M_2|^{p-1}.
\end{align*}
Moreover, since $Id+\vartheta\frac{\mathbf{f}\otimes \mathbf{f}}{|\mathbf{f}|^2}$ is positive definite for $\vartheta>-1$, we have
\begin{align*}
	|\nabla \mathbf{f}|^{p-2}\nabla \mathbf{f} : \nabla\left( |\mathbf{f}|^{\vartheta}\mathbf{f}\right)=\sum_{i=1}^n|\nabla \mathbf{f}|^{p-2}|\mathbf{f}|^\vartheta (\partial_i \mathbf{f})^T\left(Id+\vartheta\frac{\mathbf{f}\otimes \mathbf{f}}{|\mathbf{f}|^2}\right)\partial_i \mathbf{f}\gtrsim |\mathbf{f}|^\vartheta |\nabla \mathbf{f}|^{p}.
\end{align*}
Thus,
\begin{align*}
	&(|\nabla u|^{p-2} \nabla u-|\nabla w|^{p-2}\nabla w ): \nabla\left( |\mathbf{f}|^{\vartheta}\mathbf{f}\right)
	\\&	= |\nabla \mathbf{f}|^{p-2}\nabla \mathbf{f} : \nabla\left( |\mathbf{f}|^{\vartheta}\mathbf{f}\right)+ (|\nabla u|^{p-2} \nabla u-|\nabla w|^{p-2}\nabla w -|\nabla \mathbf{f}|^{p-2}\nabla \mathbf{f}): \nabla\left( |\mathbf{f}|^{\vartheta}\mathbf{f}\right)\\&
	\geq C^{-1} |\mathbf{f}|^\vartheta |\nabla \mathbf{f}|^{p}-C|\nabla u|^{p-1}|\nabla \mathbf{f}||\mathbf{f}|^\vartheta.
\end{align*}
Then \eqref{intx} leads to 
	\begin{equation*}
	\begin{split}
		&\frac{d}{dt}\int_{B_{2R}} |\mathbf{f}|^{\vartheta+2} +\int_{B_{2R}}   |\mathbf{f}|^\vartheta |\nabla \mathbf{f}|^{p}
		\lesssim  \int_{B_{2R}} |\mathbf{f}|^\vartheta | \nabla\mathbf{f}| |	\mathcal{G}|+\int_{B_{2R}} |\mathbf{f}|^\vartheta | \nabla\mathbf{f}| |\nabla u|^{p-1}.
	\end{split}
\end{equation*}
Integrate in time we obtain
	\begin{equation}\label{esu-w}
		\sup_{-\lambda^{p-2}(2R)^2\leq t\leq 0} \int_{B_{2R}} |\mathbf{f}|^{\vartheta+2}dx+\int_{Q_{2R}^\lambda} |\mathbf{f}|^\vartheta |\nabla \mathbf{f}|^{p}
		\lesssim \int_{Q_{2R}^\lambda} |\mathbf{f}|^\vartheta |\nabla \mathbf{f}| |	\mathcal{G}|+\int_{Q_{2R}^\lambda} |\mathbf{f}|^\vartheta |\nabla \mathbf{f}| |\nabla u|^{p-1}.
	\end{equation}
	Denote
\begin{equation*}
	\begin{split}
	&	E_1=\int_{Q_{2R}^\lambda} |\mathbf{f}|^\vartheta |\nabla \mathbf{f}| |	\mathcal{G}|,\\
	&	 E_3=\sup_{-\lambda^{p-2}(2R)^2\leq t\leq 0} \int_{B_{2R}} |\mathbf{f}|^{\vartheta+2}dx,  
	\end{split}\quad\quad\quad
	\begin{split}
 & E_2=\int_{Q_{2R}^\lambda} |\mathbf{f}|^\vartheta |\nabla \mathbf{f}| |\nabla u|^{p-1},\\
	  &E_4=	\int_{Q_{2R}^\lambda} |\mathbf{f}|^\vartheta 	|\nabla \mathbf{f}|^p.
\end{split}
\end{equation*}
	It follows from \eqref{esu-w} that
	\begin{equation}\label{E123}
		E_3+ E_4\lesssim E_1+E_2.
	\end{equation}
	Next we estimate the right hand side terms $E_1$ and $E_2$. Using H\"older's inequality with exponents   $s'$, $p$, $\frac{sp}{p-s}$ satisfying
	\begin{equation*}
		s=\frac{p(n+\vartheta+2)+\vartheta}{(n+\vartheta+2)+\vartheta},\ \quad\frac{1}{s}+\frac{1}{s'}=1,
	\end{equation*}
	one gets
	\begin{equation*}
		\begin{split}
			E_1
			\leq& \left(\int_{Q_{2R}^\lambda}  |	\mathcal{G}|^{s'}  \right)^{\frac{1}{s'}}
			\left(\int_{Q_{2R}^\lambda}  |\mathbf{f}|^{\vartheta} |\nabla \mathbf{f}|^{p}  \right)^{\frac{1}{p}}\left(\int_{Q_{2R}^\lambda}  |\mathbf{f}|^{\frac{\vartheta(p-1)}{p}\cdot\frac{sp}{p-s}} \right)^{\frac{p-s}{sp}}\\
			\leq &\left(\int_{Q_{2R}^\lambda}  |	\mathcal{G}|^{s'}  \right)^{\frac{1}{s'}}E_4^\frac{1}{p}
			\|\mathbf{f}\|_{L^{\sigma} }^{\frac{\vartheta(p-1)}{p}},
		\end{split}
	\end{equation*}
	where we used the fact that 
	\begin{align*}
	\frac{\vartheta s(p-1)}{p-s}=\frac{p(\vartheta+2)}{n}+p+\vartheta=\sigma.
	\end{align*}
	Note that 
	\begin{align}\label{G}
		|	\mathcal{G}|\leq |F|^{p-1}+|\nabla u|^{p-1}|a(t)-(a(t))_{B_{2R}}|.
	\end{align}
	By H\"{o}lder's inequality  we obtain for any $\kappa_0>0$
	\begin{align*}
		\left(\int_{Q_{2R}^\lambda}  |	\mathcal{G}|^{s'}  \right)^{\frac{1}{s'}}
		&\leq \|F\|^{p-1}_{L^{\sigma_1} }+\left(\int_{Q_{2R}^\lambda} |\nabla u|^{\sigma_1}|a(t)-(a(t))_{B_{2R}}|^{s'}\right)^\frac{1}{s'}\\
		&\lesssim \|F\|^{p-1}_{L^{\sigma_1} }+C_a^{1-\frac{\kappa_0}{(\sigma_1+\kappa_0)s'}}\left(\int_{Q_{2R}^\lambda}|a(t)-(a(t))_{B_{2R}}|\right)^\frac{\kappa_0}{(\sigma_1+\kappa_0)s'}\left(\int_{Q_{2R}^\lambda}|\nabla u|^{\sigma_1+\kappa_0}\right)^\frac{\sigma_1}{(\sigma_1+\kappa_0)s'},
	\end{align*}
where we used the fact that
\begin{align*}
		s'(p-1)=p+\frac{n\vartheta}{n+\vartheta+2}=\sigma_1.
\end{align*}
	Note that 
	$$
	\left(\int_{Q_{2R}^\lambda}|a(x,t)-(a(t))_{B_{2R}}|\right)^\frac{\kappa_0}{(\sigma_1+\kappa_0)s'}\leq ([a]_{2R}|Q_{2R}^\lambda|)^\frac{\kappa_0(p-1)}{\sigma_1(\sigma_1+\kappa_0)}\overset{\eqref{coa}}\leq \eta^2|Q_{2R}^\lambda|^\frac{\kappa_0(p-1)}{\sigma_1(\sigma_1+\kappa_0)}.
	$$
	Hence 
	\begin{equation}\label{rhs}
		\begin{split}
			E_1
			\lesssim &\left(\|F\|^{p-1}_{L^{\sigma_1} }+\eta^2|Q_{2R}^\lambda|^\frac{\kappa_0(p-1)}{\sigma_1(\sigma_1+\kappa_0)}\|\nabla  u\|_{L^{\sigma_1+\kappa_0}}^{p-1}\right)E_4^\frac{1}{p}
			\|\mathbf{f}\|_{L^{\sigma} }^{\frac{\vartheta(p-1)}{p}}.
		\end{split}
	\end{equation}
	Now we estimate $\|\mathbf{f}\|_{L^{\sigma} (Q_{2R}^\lambda)}$ in \eqref{rhs}. The interpolation inequality shows that
	\begin{align}\label{Int}
		\int_{B_{2R}}|\mathbf{f}|^\sigma 
		\lesssim &\left(\int_{B_{2R}}|\mathbf{f}|^{\vartheta+2} \right)^{\frac{p}{n}}
		\left(\int_{B_{2R}}|\mathbf{f}|^{\frac{n(p+\vartheta)}{n-p}} \right)^{\frac{n-p}{n}}.
	\end{align}
	Thanks to the Sobolev embedding inequality,
	we have
	\begin{align*}
		\left(\int_{B_{2R}}|\mathbf{f}|^\frac{n(p+\vartheta)}{n-p} \right)^{\frac{n-p}{n}}
		\lesssim&\left(\int_{B_{2R}}\left||\mathbf{f}|^{\frac{\vartheta}{p}}\mathbf{f}\right|^\frac{np}{n-p} \right)^{\frac{n-p}{n}}
		\lesssim\int_{B_{2R}}\left|\nabla(|\mathbf{f}|^{\frac{\vartheta}{p}}\mathbf{f})\right|^p 
		=c\int_{B_{2R}} |\mathbf{f}|^\vartheta|\nabla \mathbf{f}|^p.
	\end{align*}
	Hence
	\begin{align}\label{Sob}
		\int_{-\lambda^{p-2} (2R)^2}^{0}\left(\int_{B_{2R}}|\mathbf{f}|^\frac{n(p+\vartheta)}{n-p} dx\right)^{\frac{n-p}{n}}dt
		\lesssim E_4.
	\end{align}
	Integrating over $t$ in \eqref{Int} and using \eqref{Sob} one has
	\begin{equation}\label{Lb}
		\begin{split}
			\int_{Q_{2R}^\lambda} |\mathbf{f}|^{\sigma}
			\lesssim&\sup_{t\in [-\lambda^{p-2}(2R)^2,0]}\left(\int_{B_{2R}}|\mathbf{f}|^{\vartheta+2} dx\right)^{\frac{p}{n}}
			\int_{-\lambda^{p-2} (2R)^2}^{0}\left(\int_{B_{2R}}|\mathbf{f}|^{\frac{n(p+\vartheta)}{n-p}} dx\right)^{\frac{n-p}{n}}dt\lesssim  E_3^{\frac{p}{n}}E_4.
		\end{split}
	\end{equation}
	Combining \eqref{rhs} and \eqref{Lb}, we obtain
	\begin{equation}\label{es-E1}
		\begin{split}
			E_1\lesssim &\left(\|F\|^{p-1}_{L^{\sigma_1} }+\eta^2|Q_{2R}^\lambda|^\frac{\kappa_0(p-1)}{\sigma_1(\sigma_1+\kappa_0)}\|\nabla  u\|_{L^{\sigma_1+\kappa_0}}^{p-1}\right)E_3^\frac{\vartheta(p-1)}{n\sigma} E_4^{\frac{1}{p}+\frac{\vartheta(p-1)}{p\sigma}}.
		\end{split}
	\end{equation}
Finally, we estimate $E_2$. It is easy to check that 
$$
\frac{p-1}{\sigma_1}+\frac{1}{p}+\frac{\vartheta(p-1)}{\sigma p}=1.
$$
Moreover, we can take $\epsilon_0$ small enough such that there exists $0<\kappa_0'<\kappa_0$ satisfying 
$$
\frac{p-1}{\sigma_1+\kappa_0'}+\frac{1}{p}+\frac{\vartheta(p-1+\epsilon_0)}{\sigma p}=1.
$$
 Applying H\"older's inequality with exponents  $\frac{\sigma_1+\kappa_0'}{p-1}$, $\frac{p}{1-\epsilon_0}$, $\frac{p}{\epsilon_0}$, $\frac{\sigma p}{\vartheta(p-1+\epsilon_0)}$, one has
\begin{align*}
	E_2\lesssim& \left(\int_{Q_{2R}^\lambda}  |	\nabla u|^{\sigma_1+\kappa_0'}  \right)^{\frac{p-1}{\sigma_1+\kappa_0'}}
	\left(\int_{Q_{2R}^\lambda}  |\mathbf{f}|^{\vartheta} |\nabla \mathbf{f}|^{p}  \right)^{\frac{1-\epsilon_0}{p}}\left(\int_{Q_{2R}^\lambda}|\nabla \mathbf{f}|^p\right)^\frac{\epsilon_0}{p}\left(\int_{Q_{2R}^\lambda}  |\mathbf{f}|^\sigma \right)^{\frac{\vartheta(p-1+\epsilon_0)}{\sigma p}}\\
	\lesssim &\|\nabla u\|_{L^{\sigma_1+\kappa_0'}}^{p-1}E_4^\frac{1-\epsilon_0}{p}\|\nabla \mathbf{f}\|_{L^p}^{\epsilon_0}
	\|\mathbf{f}\|_{L^{\sigma} }^{\frac{\vartheta(p-1+\epsilon_0)}{p}}.
\end{align*}
By  \eqref{lem1-case3} and \eqref{Lb} , we have 
\begin{align*}
	E_2\lesssim& \|\nabla u\|_{L^{\sigma_1+\kappa_0'}}^{p-1}\left(C(\eta)\|F\|_{L^p}^{\epsilon_0}+\eta^2|Q^\lambda_{2R}|^{\frac{\epsilon_0(\sigma_1-p)}{\sigma_1p}} \|\nabla u\|_{L^{\sigma_1}}^{\epsilon_0}\right)
	E_3^{\frac{\vartheta(p-1+\epsilon_0)}{n\sigma}}E_4^{\frac{\vartheta(p-1+\epsilon_0)}{p\sigma}+\frac{1-\epsilon_0}{p}}.
\end{align*}
Combining this with \eqref{E123} and \eqref{es-E1}, we get
	\begin{equation*}
		\begin{split}
			E_3+E_4\lesssim &\left(\|F\|^{p-1}_{L^{\sigma_1} }+\eta^2|Q_{2R}^\lambda|^\frac{\kappa_0(p-1)}{\sigma_1(\sigma_1+\kappa_0)}\|\nabla  u\|_{L^{\sigma_1+\kappa_0}}^{p-1}\right)E_3^\frac{\vartheta(p-1)}{n\sigma} E_4^{\frac{1}{p}+\frac{\vartheta(p-1)}{p\sigma}}\\
			&\quad+\|\nabla u\|_{L^{\sigma_1+\kappa_0'}}^{p-1}\left(C(\eta)\|F\|_{L^p}^{\epsilon_0}+\eta^2 |Q^\lambda_{2R}|^{\frac{\epsilon_0(\sigma_1-p)}{\sigma_1p}}\|\nabla u\|_{L^{\sigma_1}}^{\epsilon_0}\right)
		E_3^{\frac{\vartheta(p-1+\epsilon_0)}{n\sigma}}E_4^{\frac{\vartheta(p-1+\epsilon_0)}{p\sigma}+\frac{1-\epsilon_0}{p}}.
		\end{split}
	\end{equation*}
By	H\"{o}lder's inequality	and Young's inequality, we obtain
	\begin{equation*}
		\begin{split}
		E_3+E_4
			\lesssim &\|F\|_{L^{\sigma_1} }^\frac{\sigma n}{n+2}+C(\eta)\|F\|_{L^p}^\frac{\sigma n}{n+2}+\eta|Q^\lambda_{2R}|^\frac{\kappa_0\sigma n}{\sigma_1(\sigma_1+\kappa_0)(n+2)}\|\nabla  u\|_{L^{\sigma_1+\kappa_0}}^\frac{\sigma n}{n+2}\\
			&+\eta|Q^\lambda_{2R}|^{\frac{\sigma n\epsilon_0(\sigma_1-p)}{\sigma_1p(n+2)(p-1+\epsilon_0)}}\|\nabla u\|_{L^{\sigma_1+\kappa_0'}}^\frac{(p-1)\sigma n}{(n+2)(p-1+\epsilon_0)}\|\nabla u\|_{L^{\sigma_1}}^\frac{\epsilon_0\sigma n}{(n+2)(p-1+\epsilon_0)}\\
			\lesssim &\|F\|_{L^{\sigma_1} }^\frac{\sigma n}{n+2}+C(\eta)\|F\|_{L^p}^\frac{\sigma n}{n+2}+\eta|Q^\lambda_{2R}|^\frac{\kappa_0\sigma n}{\sigma_1(\sigma_1+\kappa_0)(n+2)}\|\nabla  u\|_{L^{\sigma_1+\kappa_0}}^\frac{\sigma n}{n+2}.
		\end{split}
	\end{equation*}
	Combining this with \eqref{Lb}, we obtain
	\begin{equation}\nonumber
		\begin{split} \left(\fint_{Q_{2R}^\lambda}\left(\frac{\lambda}{R}|\mathbf{f}|\right)^{\sigma }\right)^{\frac{1}{\sigma }}\lesssim
			&C(\eta,\kappa_0)\left(\fint_{Q_{2R}^\lambda}|\lambda F|^{\sigma_1}\right)^{\frac{n+p}{\sigma_1(n+2)}}+
			\eta\left(\fint_{Q_{2R}^\lambda}|\lambda \nabla u|^{\sigma_1+\kappa_0}\right)^{\frac{n+p}{(\sigma_1+\kappa_0)(n+2)}}.
		\end{split}
	\end{equation}
	Note that $\vartheta>\frac{{2n}}{p}-(2+n)$ and $1<p\leq \frac{2n}{n+2}$ imply that
	\begin{align*}
		\sigma  = p+\vartheta+\frac{p(\vartheta+2)}{n}>\frac{n(2-p)}{p}\geq 2.
	\end{align*}
	We can use H\"older's inequality to get \eqref{lem1-case1}. This completes the proof.
	\vspace{0.5cm}

\end{proof}
\begin{remark}\label{mixcomparison}
	In the above lemma, we see that $\sigma\to\frac{n(2-p)}{p}$ and $\sigma_1\to\frac{n(2-p)}{2}$ as $\vartheta\to\frac{{2n}}{p}-(2+n)$.
\end{remark}
\begin{remark} Consider $w$ as the unique weak solution to the following system
	\begin{equation*}
		\left\{
		\begin{array}
			{ll}%
			w_t -\operatorname{div}(a(x,t)|\nabla w|^{p-2}\nabla w) =0&\text{in }\ \Omega_T,\\
			w=0&\text{on}\
			\partial_{p} \Omega_T.
			\\
		\end{array}
		\right.
	\end{equation*}
	Then $w\equiv 0$ in $\Omega_T$. Following the proof of Lemma \ref{u-w}, one has for any $\vartheta>\frac{{2n}}{p}-(2+n)$
	\begin{equation*}
		\begin{split}
			& \left(\fint_{\Omega_T}
		|u|^{\sigma}\right)^\frac{1}{\sigma}\leq
			C\left(\fint_{\Omega_T}| F|^{\sigma_1}\right)^{\frac{n+p}{\sigma_1(n+2)}},
		\end{split}
	\end{equation*}
	where
	\begin{align*}
		\sigma = p+\vartheta+\frac{p(\vartheta+2)}{n}>\frac{n(2-p)}{p},\ \quad\sigma_1 = p+\frac{n\vartheta}{n+\vartheta+2}>\frac{n(2-p)}{2}.
	\end{align*}
\end{remark}
\begin{remark}
	Note that if $p>\frac{2n}{n+2}$, then the $L^p$ comparison estimate (\eqref{lem1-case3} in Lemma \ref{u-w}) is enough to obtain the final result.
\end{remark}

\begin{lemma}\label{lemw}
	Let $w$ be a weak solution of \eqref{e2}.
	Then for any $\beta_1>\frac{n(2-p)}{2}$, one has
	\begin{align}\label{Dwbeta}
		\fint_{Q^\lambda_{R}(x_0,t_0)}(\lambda |\nabla w|)^{\beta_1}
		\lesssim
		A
		^\frac{2\beta_1}{ p}+A+G_{\frac{2p-2}{p}}\left(\fint_{Q^\lambda_{2R}(x_0,t_0)}(\lambda|\nabla w|)^{p}\right).
	\end{align}
Here the function $G_{\frac{2p-2}{p}}$ is defined in \eqref{formularG} and
	\begin{equation}\label{def-A}
		A=\left[\fint_{t_0-\lambda^{p-2}
			(2R)^2}^{t_0}\left(\fint_{B_{2R}(x_0)}
		\left(\frac{\lambda}{R}|w-w_{2R}|\right)^{q_0}dx\right)^\frac{\sigma_2}{q_0}dt\right]
		^\frac{1}{\sigma_2}
	\end{equation}
	with $q_0=\frac{2\beta_1}{p}>\frac{n(2-p)}{p}$, $\sigma_2=\min\{\frac{2\beta_1(n+p)}{2\beta_1+(n+2)p},2\}$ and $w_{2R}=\fint_{B_{2R}(x_0)}wdx$.
\end{lemma}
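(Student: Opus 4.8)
The plan is to reduce, via the intrinsic rescaling attached to $\lambda$, to an interior gradient bound for the \emph{homogeneous} singular system on a fixed cylinder, and then to run the DiBenedetto--Choe scheme (cf.\ \cite{DiBenedetto2,Choe1}) with quantitative control of the constants. The key observation is that the hypothesis $\beta_1>\frac{n(2-p)}{2}$, equivalently $q_0=\frac{2\beta_1}{p}>\frac{n(2-p)}{p}$, is \emph{exactly} Choe's integrability threshold for the solution itself; and the failure of scale invariance of the $p$-Laplacian for $p\ne 2$ is precisely what forces the function $G_{\frac{2p-2}{p}}$ onto the right-hand side of \eqref{Dwbeta}.

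First I would normalize. With $(x_0,t_0)=(0,0)$, set $v(y,s)=\frac{\lambda}{2R}(w(2Ry,\lambda^{p-2}(2R)^2 s)-c)$ for a constant $c$ (for which \eqref{e2} is invariant). Then $v$ solves the same homogeneous system on $Q_1:=B_1\times(-1,0)$ with a coefficient depending only on $s$ and still satisfying \eqref{cona}, one has $\nabla_y v=\lambda\nabla_x w$, and the cylinders $Q^\lambda_R$, $Q^\lambda_{2R}$ become $B_{1/2}\times(-1/4,0)$ and $Q_1$. Choosing $c=\fint_{Q^\lambda_{2R}}w$ and comparing with the time-slice means of $w$, the quantity $A$ in \eqref{def-A} controls $\|v\|_{L^{\sigma_2}_s L^{q_0}_y(Q_1)}=:\|v\|_\star$ up to an additive term of order $(\fint_{Q_1}|\nabla v|^p)^{1/p}$, which is harmless since that energy already appears on the right of \eqref{Dwbeta}. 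Thus it suffices to bound $\fint_{B_{1/2}\times(-1/4,0)}|\nabla v|^{\beta_1}$ by an explicit function of $\|v\|_\star$ and $M:=\fint_{Q_1}|\nabla v|^p$.

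The heart of the matter is a two-step regularity estimate for $v$. \emph{Step A (boundedness of $v$):} a Moser-type iteration using the test functions $|v-c|^{\vartheta}(v-c)$, $\vartheta\nearrow\infty$, in \eqref{e2} --- precisely the device from the proof of Lemma \ref{u-w}, combined with the parabolic Sobolev interpolation \eqref{Int}--\eqref{Lb} --- yields $\|v-c\|_{L^\infty(B_{3/4}\times(-9/16,0))}\le\Phi(\|v\|_\star)$ with an explicit non-homogeneous $\Phi$; because of the mismatch between the $|v-c|^{\vartheta+2}$ term (from the time derivative) and the $|v-c|^{\vartheta+p}$ term (from the spatial gradient), the relative gain per step degenerates and the iteration converges \emph{precisely} when the starting exponent $q_0$ exceeds $\frac{n(2-p)}{p}$; this same analysis fixes the admissible mixed-norm pair $(q_0,\sigma_2)$, the value $\sigma_2=\min\{\frac{2\beta_1(n+p)}{2\beta_1+(n+2)p},2\}$ being forced by the $L^\infty_s L^2_y$ part of the energy (compare the derivation of \eqref{lem1-case1}). \emph{Step B (boundedness of $\nabla v$):} once $v$ is bounded, differentiating \eqref{e2} gives the gradient Caccioppoli inequality $\sup_s\int\zeta^2|\nabla v|^2+\int\int\zeta^2|\nabla(|\nabla v|^{p/2})|^2\lesssim\int\int|\nabla\zeta|^2|\nabla v|^p+\int\int\zeta|\zeta_s||\nabla v|^2$; the last, obstructing, term cannot be absorbed by $M$ alone in the strongly singular range, but it is controlled on cylinders intrinsic to the level $k$ of $|\nabla v|$ --- there $|\nabla v|^2\lesssim k^{2-p}|\nabla v|^p$ and the factor $k^{2-p}$ is swallowed by the intrinsic time-length $\sim k^{p-2}\rho^2$ --- and DiBenedetto's singular-case De Giorgi iteration then yields $\|\nabla v\|_{L^\infty(B_{1/2}\times(-1/4,0))}\le\Psi(\|v-c\|_{L^\infty(B_{3/4}\times(-9/16,0))},M)$ with $\Psi$ explicit. (Both steps may alternatively be quoted from \cite{Choe1,DiBenedetto2} for the $p$-Laplacian system, the work being to track the dependence of the constants.)

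Finally I would assemble and unwind. Combining the two steps, $\fint_{B_{1/2}\times(-1/4,0)}|\nabla v|^{\beta_1}\le(\sup|\nabla v|)^{\beta_1}\lesssim\Psi(\Phi(\|v\|_\star),M)^{\beta_1}$. By the non-homogeneity, the $M$-dependence of the right side enters with two exponents according to whether $M\ge 1$ or $M\le 1$ (the cylinder being ``super-'' or ``sub-intrinsic''), both lying in $[\frac{2p-2}{p},\frac{p}{2p-2}]$, so each such contribution is dominated by $G_{\frac{2p-2}{p}}(M)$ through its quasi-homogeneity $G_\gamma(cs)\le\max\{c^\gamma,c^{1/\gamma}\}\,G_\gamma(s)$; the $\|v\|_\star$-dependence, after raising to the power $\beta_1$ and using $\frac{2\beta_1}{p}=q_0$, is $\lesssim\|v\|_\star^{q_0}+\|v\|_\star\lesssim A^{2\beta_1/p}+A$, and a Young-type splitting reduces the mixed products $\|v\|_\star^{a}M^{b}$ to these two families; this is \eqref{Dwbeta}. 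The main obstacle is Step B: running the intrinsic-scaling gradient De Giorgi iteration for the \emph{system} throughout $1<p\le\frac{2n}{n+2}$ with an \emph{effective} bound on $\Psi$, and then --- the delicate bookkeeping --- verifying that, once the rescaling is undone, every power of $M$ that appears falls in $[\frac{2p-2}{p},\frac{p}{2p-2}]$ so as to be absorbed by the single function $G_{\frac{2p-2}{p}}$, while the terms involving $\|v\|_\star$ collapse to exactly $A^{2\beta_1/p}+A$.
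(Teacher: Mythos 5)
Your plan is a genuine alternative to the paper's proof, but it takes a much longer road and leaves the decisive exponent bookkeeping unverified. The paper does \emph{not} pass through any $L^\infty$ bounds in Lemma \ref{lemw}: neither $\|w\|_{L^\infty}$ nor $\|\nabla w\|_{L^\infty}$ is used. Instead, after approximating $w$ by the smooth solution $v$ of the regularized equation \eqref{eq-v}, the argument consists of (i) a single integration by parts, $\int(|\nabla v|^2+\varepsilon^2)^{\mu/2}|\nabla v|^2\psi^{k'} = -\int(v-v_{R_1})\operatorname{div}((|\nabla v|^2+\varepsilon^2)^{\mu/2}\nabla v\,\psi^{k'})$, which is how $v-v_{R_1}$ (and ultimately the quantity $A$) enters; (ii) a Caccioppoli estimate obtained by differentiating the equation and testing with $v_{x_l}(|\nabla v|^2+\varepsilon^2)^{\mu/2}\psi^{k+1}$, which absorbs the $\nabla^2 v$ term; and (iii) the one-step exponent bootstrap
\[
\fint_{Q^\lambda_{R_2}}(\lambda|\nabla w|)^{\mu+2}\lesssim \tilde A_0^{2(\mu+2)/p}+\tilde A_0+\fint_{Q^\lambda_{R_1}}(\lambda|\nabla w|)^{\mu+p},
\]
iterated a \emph{finite} number $N_0=[\frac{\beta_1-p}{2-p}]$ of times with shrinking radii. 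This is structurally simple, never needs $L^\infty$ information, and produces the right-hand side of \eqref{Dwbeta} directly (the $G_{\frac{2p-2}{p}}$ term is born at the bottom step via H\"older applied to $\fint|\nabla w|^{\tau-2+p}$). The $L^\infty$ gradient bound you invoke in your Step B is proved \emph{separately} in Lemma \ref{z} of the paper, and crucially it is run as a Moser iteration starting from $\|\nabla w\|_{L^{\beta_1}}$ with $\beta_1>\frac{n(2-p)}{2}$, not from $\|\nabla w\|_{L^p}$ plus $\|w\|_{L^\infty}$ via Choe's level-set iteration, precisely because the exponents $\nu_0,\nu_1$ coming out of that iteration (which go to $0$ or $\infty$ as $\beta_1\downarrow\frac{n(2-p)}{2}$) must then be compensated by the $\beta_1$-dependent powers of $A$ produced by Lemma \ref{lemw}.

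This is also where I see a real gap in your proposal. When you raise your candidate $L^\infty$ bound to the power $\beta_1$, you claim that all the $M$-exponents land in $[\frac{2p-2}{p},\frac{p}{2p-2}]$. But in Choe's scheme, $\|\nabla v\|_{L^\infty}\lesssim \|v\|_{L^\infty}^{a}M^{b}+\ldots$ has exponents $a,b$ depending only on $n,p$; raising to the power $\beta_1$ multiplies them by $\beta_1$, and $b\beta_1$ grows without bound as $\beta_1\to\infty$, leaving you outside the window that $G_{\frac{2p-2}{p}}$ can absorb. The paper's two-lemma split resolves this automatically: Lemma \ref{lemw} carries $\beta_1$-dependent powers of $A$ (namely $A^{2\beta_1/p}$ and $A$) but only $\beta_1$-\emph{independent} powers of $M$ inside $G_{\frac{2p-2}{p}}$, and Lemma \ref{z}'s exponents $\nu_0/\beta_1,\nu_1/\beta_1$ knock the $\beta_1$'s back down before the final combination in Proposition \ref{prop1}. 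If you want to prove Lemma \ref{lemw} by your route, you would need an effective version of Choe's theorem in which the $M$-exponent is $O(1/\beta_1)$, which is not what the cited intrinsic De Giorgi iteration gives. In short: correct in spirit but the arithmetic you flag as ``delicate'' is the actual obstruction, and the paper's finite bootstrap sidesteps it.
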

\begin{proof} For simplicity, we assume that $(x_0,t_0)=(0,0)$.
	Let $v$ be the solution of
	\begin{equation}\label{eq-v}
		v_t -\operatorname{div}\left((a(t))_{B_{2R}}(|\nabla v|^2+\varepsilon^2)^{\frac{p-2}{2}}\nabla v\right)= 0 \quad \text{in}~ Q_{2R}^\lambda
	\end{equation}
and $v=w$ on $\partial_{p}Q_{2R}^\lambda$. By the classical regularity theory,  $v$ is smooth for any $\varepsilon>0$. Moreover, $v\to w$ in $C^1_{loc}(Q_{2R}^\lambda)$ as $\varepsilon\rightarrow 0$. 
	Let $R\leq R_2<R_1\leq 3R/2$ and $R_1-R_2\geq cR$ for some $c>0$. Define $\psi$ be the standard cutoff function such that $\psi=1$ in $Q^\lambda_{R_2}$, $\psi=0$ in a neighborhood of parabolic boundary of $Q^\lambda_{R_1}$, and
	\[0\leq \psi\leq 1,\ |\psi_t|\lesssim \frac{1}{\lambda^{p-2}(R_1-R_2)^2},\ |\nabla\psi|\lesssim \frac{1}{R_1-R_2}.\]
	Denote $\theta_0=\frac{2\sigma_2}{2+\sigma_2}\in(0,1]$. 	Let $k>\frac{1}{\theta_0}+1$ be an integer which will be decided later. For any $p-2< \mu$, one has
\begin{equation}\label{theta0}
	\begin{split}
			&\int_{Q^{\lambda}_{R_1}}
		(|\nabla v|^2+\varepsilon^2)^{\frac{\mu}{2}}|\nabla v|^2\psi^{k} \\&\lesssim  \int_{-\lambda^{p-2}R_1^2}^{0} \left(\int _{B_{R_1}} (|\nabla v|^2+\varepsilon^2)^{\frac{\mu}{2}}|\nabla v|^2\psi^{k+1}dx\right)^{1-\theta_0}  \left(\int_{B_{R_1}} (|\nabla v|^2+\varepsilon^2)^{\frac{\mu}{2}}|\nabla v|^2\psi^{k'} dx\right)^{\theta_0} dt\\
		&\lesssim \left(\sup _{-\lambda^{p-2}R_1^2 \leq t \leq 0}\int_{B_{R_1}}(|\nabla v|^2+\varepsilon^2)^{\frac{\mu+2}{2}} \psi^{k+1} d x\right)^{1-\theta_0}\int_{-\lambda^{p-2}R_1^2}^{0}\left(\int_{B_{R_1}} (|\nabla v|^2+\varepsilon^2)^{\frac{\mu}{2}}|\nabla v|^2\psi^{k'} dx\right)^{\theta_0} dt,
	\end{split}
\end{equation}
where $k'=k+1-\frac{1}{\theta_0}$. 
  Take $q_1= \frac{2(\mu+2)}{p}$, using integration by parts and applying H\"older's inequality we obtain
\begin{equation}\label{eqm}
	\begin{split}
		&\int_{B_{R_1}}(|\nabla v|^2+\varepsilon^2)^{\frac{\mu}{2}}|\nabla v|^{2} \psi^{k'} d x=-\int_{B_{R_1}} (v-v_{R_1}) \operatorname{div}\left((|\nabla v|^2+\varepsilon^2)^{\frac{\mu}{2}} \nabla v \psi^{k'}\right) d x \\
		&\qquad\qquad\lesssim \left(\int_{B_{R_1}}|v-v_{R_1}|^{q_1}dx\right)^\frac{1}{q_1}\left(\int_{B_{R_1}}\left((|\nabla v|^2+\varepsilon^2)^{\frac{\mu}{2}}\left|\nabla^{2} v\right| \psi^{k'}\right)^{q_1'} d x\right)^\frac{1}{q_1'}\\
		&\qquad\qquad\quad+\left(\int_{B_{R_1}}|v-v_{R_1}|^{q_1}dx\right)^\frac{1}{q_1} \left(\int_{B_{R_1}}\left((|\nabla v|^2+\varepsilon^2)^{\frac{\mu+1}{2}}|\nabla \psi| \psi^{k'-1}\right)^{q_1'} d x\right)^\frac{1}{q_1'},
	\end{split}
\end{equation}
where $q_1'$ satisfies $\frac{1}{q_1}+\frac{1}{q_1'}=1$. Take $k$ large enough such that $$\min\left\{\left(\frac{k+1}{2}-\frac{1}{\theta_0}\right)\frac{2(\mu+2)}{\mu+2-p},\left(k-\frac{1}{\theta_0}\right)\frac{\mu+2}{\mu+1}\right\}\geq k+1.$$ By H\"older's inequality we obtain
\begin{equation}\label{term1}
	\begin{split}
		&\left(\int_{B_{R_1}}\left((|\nabla v|^2+\varepsilon^2)^{\frac{\mu}{2}}\left|\nabla^{2} v\right| \psi^{k'}\right)^{q_1'} d x\right)^\frac{1}{q_1'}\\
		&= \left(\int_{B_{R_1}}\left[\left((|\nabla v|^2+\varepsilon^2)^{\frac{\mu+p-2}{4}}\left|\nabla^{2} v\right| \psi^\frac{k+1}{2}\right)\left((|\nabla v|^2+\varepsilon^2)^\frac{\mu-p+2}{4} \psi^{\frac{k+1}{2}-\frac{1}{\theta_0} }\right)\right]^{q_1'} d x\right)^\frac{1}{q_1'}\\
		&\lesssim \left(\int_{B_{R_1}}(|\nabla v|^2+\varepsilon^2)^{\frac{\mu+p-2}{2}}\left|\nabla^{2} v\right|^{2} \psi^{k+1} d x\right)^{\frac{1}{2}}\left(\int_{B_{R_1}}(|\nabla v|^2+\varepsilon^2)^\frac{\mu+2}{2} \psi^{k+1} d x\right)^{\frac{\mu+2-p}{2(\mu+2)}},
	\end{split}
\end{equation}
and
\begin{equation}\label{term2}
	\begin{split}
		\left(\int_{B_{R_1}}\left((|\nabla v|^2+\varepsilon^2)^{\frac{\mu+1}{2}}|\nabla \psi| \psi^{k'-1}\right)^{q_1'} d x\right)^\frac{1}{q_1'}&\lesssim R^{\frac{n}{\mu+2}-\frac{n}{q_1}-1}\left(\int_{B_{R_1}}(|\nabla v|^2+\varepsilon^2)^\frac{\mu+2}{2}\psi^{(k'-1)\frac{\mu+2}{\mu+1}} d x\right)^{\frac{\mu+1}{\mu+2}}\\
		&\lesssim R^{\frac{n}{\mu+2}-\frac{n}{q_1}-1}\left(\int_{B_{R_1}}(|\nabla v|^2+\varepsilon^2)^\frac{\mu+2}{2}\psi^{k+1} d x\right)^{\frac{\mu+1}{\mu+2}}.
	\end{split}
\end{equation}
	Substitute \eqref{term1}, \eqref{term2} in \eqref{eqm} and integrate in time,
by H\"{o}lder's inequality we obtain
\begin{align*}
	&\int_{-\lambda^{p-2}R_1^2}^{0}\left(\int_{B_{R_1}} (|\nabla v|^2+\varepsilon^2)^{\frac{\mu}{2}}|\nabla v|^2\psi^{k'} dx\right)^{\theta_0} dt\\
	&\lesssim \left(\int_{-\lambda^{p-2}R_1^2}^{0} \left(\int_{B_{R_1}}|v-v_{R_1}|^{q_1}dx\right)^\frac{\sigma_2}{q_1}dt\right)^{\frac{\theta_0}{\sigma_2}}\left(\sup_{ -\lambda^{p-2}R_1^2\leq t\leq 0}\int_{B_{R_1}}(|\nabla v|^2+\varepsilon^2)^\frac{\mu+2}{2} \psi^{k+1} d x\right)^{\frac{(\mu+2-p)\theta_0}{2(\mu+2)}}\\ &
	\qquad\qquad\times\left(\int_{Q^{\lambda}_{R_1}}(|\nabla v|^2+\varepsilon^2)^{\frac{\mu+p-2}{2}}\left|\nabla^{2} v\right|^{2} \psi^{k+1} \right)^{\frac{\theta_0}{2}} \\
	&\qquad+\left(\int_{-\lambda^{p-2}R_1^2}^{0} \left(\int_{B_{R_1}}|v-v_{R_1}|^{q_1}dx\right)^\frac{\sigma_2}{q_1}dt\right)^{\frac{\theta_0}{\sigma_2}} \left(\sup_{ -\lambda^{p-2}R_1^2\leq t\leq 0}\int_{B_{R_1}}(|\nabla v|^2+\varepsilon^2)^{\frac{\mu+2}{2}}\psi^{k+1} d x\right)^{\frac{(\mu+1)\theta_0}{\mu+2}}\\
	&\qquad\qquad\times R^{({\frac{n}{\mu+2}-\frac{n}{q_1}-1})\theta_0}
	(\lambda^{p-2}R^2)^{\frac{\theta_0}{2}}.
\end{align*}
Combining the above estimate with \eqref{theta0}, one has
	\begin{equation}\label{estv}
		\begin{split}
		&\int_{Q^{\lambda}_{R_1}}(|\nabla v|^2+\varepsilon^2)^{\frac{\mu}{2}}|\nabla v|^2\psi^{k} \\
	&\lesssim \left(\int_{-\lambda^{p-2}R_1^2}^{0} \left(\int_{B_{R_1}}|v-v_{R_1}|^{q_1}dx\right)^\frac{\sigma_2}{q_1}dt\right)^{\frac{\theta_0}{\sigma_2}}\left(\int_{Q^{\lambda}_{R_1}}(|\nabla v|^2+\varepsilon^2)^{\frac{\mu+p-2}{2}}\left|\nabla^{2} v\right|^{2} \psi^{k+1} \right)^{\frac{\theta_0}{2}}\\ &
	\qquad\qquad\qquad\qquad\times \left(\sup_{ -\lambda^{p-2}R_1^2\leq t\leq 0}\int_{B_{R_1}}(|\nabla v|^2+\varepsilon^2)^\frac{\mu+2}{2} \psi^{k+1} d x\right)^{\frac{(\mu+2-p)\theta_0}{2(\mu+2)}+1-\theta_0}\\
	&\qquad+\frac{(\lambda^{p-2}R^2)^{\frac{\theta_0}{2}}}{R^{(1-\frac{n}{\mu+2}+\frac{n}{q_1})\theta_0}}\left(\int_{-\lambda^{p-2}R_1^2}^{0} \left(\int_{B_{R_1}}|v-v_{R_1}|^{q_1}dx\right)^\frac{\sigma_2}{q_1}dt\right)^{\frac{\theta_0}{\sigma_2}} \\
	&\qquad\qquad\qquad\qquad\times\left(\sup_{ -\lambda^{p-2}R_1^2\leq t\leq 0}\int_{B_{R_1}}(|\nabla v|^2+\varepsilon^2)^{\frac{\mu+2}{2}}\psi^{k+1} d x\right)^{\frac{(\mu+1)\theta_0}{\mu+2}+1-\theta_0}.
	\end{split}
\end{equation}
	By differentiating \eqref{eq-v} with respect to $x_l$ we have
	\begin{align}\label{eq-vxl}
		(v_{x_l})_t-\left((a(t))_{B_{2R}} b_{ij}(|\nabla v|^2+\varepsilon^2)^{\frac{p-2}{2}}v_{x_lx_j}\right)_{x_i}=0, \quad\text{where}\quad b_{ij}=\delta_{ij}+(p-2)\frac{v_{x_i}v_{x_j}}{|\nabla v|^2+\varepsilon^2}.
	\end{align}
	Applying $v_{x_l}(|\nabla v|^2+\varepsilon^2)^{\frac\mu{2}} \psi^{k+1}$ as a test function, we obtain
	\begin{align*}
		&\sup _{-\lambda^{p-2}R_1^2 \leq t \leq 0} \int_{B_{R_1}}(|\nabla v|^2+\varepsilon^2)^{\frac{\mu+2}{2}} \psi^{k+1} d x+\int_{Q^{\lambda}_{R_1}}(|\nabla v|^2+\varepsilon^2)^{\frac{\mu+p-2}{2}}\left|\nabla^{2} v\right|^{2} \psi^{k+1}  \\
		&\quad\quad\quad\quad\quad\quad\lesssim \frac{1}{\lambda^{p-2}R^2} \int_{Q^{\lambda}_{R_1}}(|\nabla v|^2+\varepsilon^2)^{\frac{\mu+2}{2}} \psi^{k} +\frac{1}{R^{2}} \int_{Q^{\lambda}_{R_1}}(|\nabla v|^2+\varepsilon^2)^{\frac{\mu+p}{2}} \psi^{k-1} .
	\end{align*}
Substituting the above in \eqref{estv} one has
	\begin{align*}
		&\int_{Q^{\lambda}_{R_1}}(|\nabla v|^2+\varepsilon^2)^{\frac{\mu}{2}}|\nabla v|^2\psi^{k}\\& \lesssim \left(\int_{-\lambda^{p-2}R_1^2}^{0} \left(\int_{B_{R_1}}|v-v_{R_1}|^{q_1}dx\right)^\frac{\sigma_2}{q_1}dt\right)^{\frac{\theta_0}{\sigma_2}} \\&\quad\quad\times\left\{\left(\frac{1}{\lambda^{p-2}R^2}\int_{Q^{\lambda}_{R_1}}(|\nabla v|^2+\varepsilon^2)^{\frac{\mu+2}{2}} \psi^{k}+\frac{1}{R^2}\int_{Q^{\lambda}_{R_1}}(|\nabla v|^2+\varepsilon^2)^{\frac{\mu+p}{2}} \psi^{k-1}\right)^{1-\frac{p\theta_0}{2(\mu+2)}}\right.\\
		&\quad\quad\quad+\left. 	\frac{(\lambda^{p-2}R^2)^{\frac{\theta_0}{2}}}{R^{(1-\frac{n}{\mu+2}+\frac{n}{q_1})\theta_0}}\left(\frac{1}{\lambda^{p-2}R^2}\int_{Q^{\lambda}_{R_1}}(|\nabla v|^2+\varepsilon^2)^{\frac{\mu+2}{2}} \psi^{k}+\frac{1}{R^2}\int_{Q^{\lambda}_{R_1}}(|\nabla v|^2+\varepsilon^2)^{\frac{\mu+p}{2}} \psi^{k-1}\right)^{1-\frac{\theta_0}{\mu+2}}\right\}.
	\end{align*}
Let $\varepsilon\rightarrow 0$, then $v\to w$ in $C^1(Q_{3R/2}^\lambda)$. Note that $|v-v_{R_1}|\lesssim |w-w_{R_1}|$ as $\varepsilon\to 0$. The above estimate leads to 
\begin{align*}
		\int_{Q^{\lambda}_{R_1}}|\nabla w|^{\mu+2}\psi^{k}\lesssim& \left(\int_{-\lambda^{p-2}R_1^2}^{0} \left(\int_{B_{R_1}}|w-w_{R_1}|^{q_1}dx\right)^\frac{\sigma_2}{q_1}dt\right)^{\frac{\theta_0}{\sigma_2}} \\&\quad\times\left\{\left(\frac{1}{\lambda^{p-2}R^2}\int_{Q^{\lambda}_{R_1}}|\nabla w|^{\mu+2} \psi^{k}+\frac{1}{R^2}\int_{Q^{\lambda}_{R_1}}|\nabla w|^{\mu+p} \psi^{k-1}\right)^{1-\frac{p\theta_0}{2(\mu+2)}}\right.\\
	&\quad\quad+\left. \frac{(\lambda^{p-2}R^2)^{\frac{\theta_0}{2}}}{R^{(1-\frac{n}{\mu+2}+\frac{n}{q_1})\theta_0}}	\left(\frac{1}{\lambda^{p-2}R^2}\int_{Q^{\lambda}_{R_1}}|\nabla w|^{\mu+2} \psi^{k}+\frac{1}{R^2}\int_{Q^{\lambda}_{R_1}}|\nabla w|^{\mu+p} \psi^{k-1}\right)^{1-\frac{\theta_0}{\mu+2}}\right\}.
\end{align*}
	By Young's inequality we obtain
	\begin{equation*}
		\begin{split}
			\fint_{Q^\lambda_{R_2}}|\lambda\nabla w|^{\mu+2}
			&\lesssim  \sup_{p_1\in\{p,2\}}\left[\fint_{-\lambda^{p-2}R_1^2}^0\left(\fint_{B_{R_1}}\left(\frac{\lambda}{R_1}|w-w_{R_1}|\right)^{q_1}dx\right)^\frac{\sigma_2}{q_1}dt\right]^\frac{2(\mu+2)}{p_1\sigma_2}+\fint_{Q^\lambda_{R_1}}|\lambda\nabla w|^{\mu+p}.
		\end{split}
	\end{equation*}
	Using the fact that for any  $q_1\leq q_0$
	\begin{align*}
		\left(\fint_{B_{R_1}}
		|w-w_{R_1}|^{q_1}dx\right)^\frac{1}{q_1}\leq C	\left(\fint_{B_{3R/2}}
		|w-w_{3R/2}|^{q_0}dx\right)^\frac{1}{q_0},
	\end{align*}
	we obtain that for any $\mu$ satisfies $\frac{2(\mu+2)}{p}\leq q_0$
	\begin{align}\label{es-wmu}
		\fint_{Q^\lambda_{R_2}}(\lambda|\nabla w|)^{\mu+2}
		\lesssim
		\tilde{A}_0
		^\frac{2(\mu+2)}{ p}+\tilde{A}_0+\fint_{Q^\lambda_{R_1}}(\lambda|\nabla w|)^{\mu+p},
	\end{align}
where 
\begin{equation*}
	\tilde{A}_0=\left[\fint_{-\lambda^{p-2}
		(3R/2)^2}^0\left(\fint_{B_{3R/2}}
	\left(\frac{\lambda}{R}|w-w_{3R/2}|\right)^{q_0}dx\right)^\frac{\sigma_2}{q_0}dt\right]
	^\frac{1}{\sigma_2}.
\end{equation*}
Moreover, if $0\leq\mu\leq \mu_0$, \eqref{es-wmu} leads  to 
	\begin{align}\label{es-wmu1}
	\fint_{Q^\lambda_{R_2}}(\lambda|\nabla w|)^{\mu+2}
	\lesssim
	\tilde{A}_0
	^\frac{2(\mu_0+2)}{ p}+\tilde{A}_0+\fint_{Q^\lambda_{R_1}}(\lambda|\nabla w|)^{\mu+p}.
\end{align}
	Let $\beta_1>\frac{n(2-p)}{2}$, and fix
	\begin{align*}
	q_0=\frac{2\beta_1}{p}>\frac{n(2-p)}{p}.
\end{align*}
	We will bound $\|\nabla w\|_{L^{\beta_1}(Q_{R}^\lambda)}$ by $\|\nabla w\|_{L^p(Q_{3R/2}^\lambda)}$ using iteration. 
	Define $\tau=\beta_1-\left[\frac{\beta_1-p}{2-p}\right](2-p)$. Note that $p\leq\tau< 2$ and $\tau=\beta_1$ if $\beta_1<2$.
	 We will first bound $\|\nabla w\|_{L^\tau(Q_{5R/4}^\lambda)}$ by $\|\nabla w\|_{L^p(Q_{3R/2}^\lambda)}$, and then bound $\|\nabla w\|_{L^{\beta_1}(Q_{R}^\lambda)}$ by $\|\nabla w\|_{L^\tau(Q_{5R/4}^\lambda)}$.
	 By H\"{o}lder's inequality we have 
	\begin{align}\label{step0}
			\fint_{Q^\lambda_{5R/4}}(\lambda|\nabla w|)^{\tau}
		\lesssim
		\tilde{A}_0
		^\frac{2\tau}{ p}+\tilde{A}_0+\left(\fint_{Q^\lambda_{2R}}(\lambda|\nabla w|)^{\tau-2+p}\right)\lesssim 	\tilde{A}_0
		^\frac{2\beta_1}{ p}+\tilde{A}_0+\left(\fint_{Q^\lambda_{2R}}(\lambda|\nabla w|)^{p}\right)^\frac{\tau-2+p}{p},
	\end{align} 
where we also used \eqref{es-wmu} with $\mu=\tau-2$.
If $\beta_1<2$, then \eqref{Dwbeta} follows from the above estimate with $\tau=\beta_1$. It remains to consider $\beta_1\geq 2$. 
	Let 
	$N_0=\left[\frac{\beta_1-p}{2-p}\right]$, and fix $	\mu_0=\beta_1-2$ in \eqref{es-wmu1}. 
	Apply \eqref{es-wmu1} with
	\[\mu=\mu_j=\tau-2+(2-p)j,\quad R_1=R_{1j}=\frac{5R}{4}-\frac{R}{4N_0}(j-1),\ R_2=R_{0j}=\frac{5R}{4}-\frac{R}{4N_0}j,\]
	for $1\leq j\leq N_0$. We have $R_1-R_2=R/(4N_0)$ and $\mu_{N_0}+2= \beta_1$. After using \eqref{es-wmu1} $N_0$ steps we obtain 
	\begin{align*}
		\fint_{Q^\lambda_{R}}(\lambda |\nabla w|)^{\beta_1}
		\lesssim
		\tilde{A}_0
		^\frac{2\beta_1}{ p}+\tilde{A}_0+\fint_{Q^\lambda_{5R/4}}\left(\lambda|\nabla w|\right)^{\tau}\lesssim 	A
		^\frac{2\beta_1}{ p}+A+\fint_{Q^\lambda_{5R/4}}\left(\lambda|\nabla w|\right)^{\tau}.
	\end{align*}
Combining this with \eqref{step0} we obtain the result.
\end{proof}
  \begin{lemma} \label{z}
 	Let  $w$ be a weak solution of \eqref{e2}. Then for any  $\beta_{1}>\frac{n(2-p)}{2}$, there exist constants $\nu_0, \nu_1>0$ such that
 	\begin{equation}\label{ZA}
 		\lambda\|\nabla w\|_{L^\infty(Q_{R/2}^\lambda(x_0,t_0))}\leq C\left(\fint_{Q_{R}^\lambda(x_0,t_0)}|\lambda\nabla w|^{\beta_{1}}\right)^{\frac{\nu_0}{\beta_{1}}}+C\left(\fint_{Q_{R}^\lambda(x_0,t_0)}|\lambda\nabla w|^{\beta_{1}}\right)^{\frac{\nu_1}{\beta_{1}}},
 	\end{equation}
 provided $\nabla w\in L^{\beta_1}(Q_{R}^\lambda(x_0,t_0))$. Here $C$ is a constant independent of $\beta_1$.
 \end{lemma}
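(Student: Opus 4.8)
The plan is to prove Lemma \ref{z} by a standard Moser-type iteration on the system \eqref{eq-vxl} satisfied by the derivatives $v_{x_l}$ of the regularized solution $v$, followed by letting $\varepsilon\to 0$. First I would fix $\varepsilon>0$ and work with the smooth solution $v$ of \eqref{eq-v}, for which $|\nabla v|$ is bounded and the second derivatives are controlled. Testing \eqref{eq-vxl} with $v_{x_l}(|\nabla v|^2+\varepsilon^2)^{\frac{\mu}{2}}\psi^{k+1}$ as already done in the proof of Lemma \ref{lemw} gives the energy estimate controlling $\sup_t\int (|\nabla v|^2+\varepsilon^2)^{\frac{\mu+2}{2}}\psi^{k+1}$ and $\int_{Q}(|\nabla v|^2+\varepsilon^2)^{\frac{\mu+p-2}{2}}|\nabla^2 v|^2\psi^{k+1}$ by $\frac{1}{\lambda^{p-2}R^2}\int(|\nabla v|^2+\varepsilon^2)^{\frac{\mu+2}{2}}\psi^k+\frac{1}{R^2}\int(|\nabla v|^2+\varepsilon^2)^{\frac{\mu+p}{2}}\psi^{k-1}$. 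Combining this energy bound with the parabolic Sobolev (Gagliardo--Nirenberg) inequality applied to $(|\nabla v|^2+\varepsilon^2)^{\frac{\mu+2}{4}}\psi^{(k+1)/2}$ produces a reverse-H\"older / self-improving inequality that raises the integrability exponent from $\mu+2$ on a larger cylinder to $\chi(\mu+2)$ on a smaller one, where $\chi=1+\frac{2}{n}>1$ is the parabolic Sobolev gain.

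Next I would set up the iteration: choose a decreasing sequence of radii $R_j\downarrow R/2$ with $R_j\in[R/2,R]$, and a sequence of exponents $\mu_j+2=\beta_1\chi^j$ (appropriately normalized so that the scaling in $\lambda$ and $R$ is respected — this is why everything is written in terms of $\lambda|\nabla w|$). Iterating the reverse-H\"older inequality over $j$, summing the resulting geometric series in $j$ (the constants $C^j$ and the $R_j-R_{j+1}$ factors telescope because $\chi>1$), and passing $j\to\infty$ yields
\[
\lambda\|\nabla v\|_{L^\infty(Q_{R/2}^\lambda)}\lesssim \left(\fint_{Q_R^\lambda}(\lambda|\nabla v|+\lambda\varepsilon)^{\beta_1}\right)^{\frac{\nu_0}{\beta_1}}+\left(\fint_{Q_R^\lambda}(\lambda|\nabla v|+\lambda\varepsilon)^{\beta_1}\right)^{\frac{\nu_1}{\beta_1}},
\]
with two distinct exponents $\nu_0,\nu_1$ arising because the right-hand side of the basic energy inequality has the two different homogeneities $\mu+2$ and $\mu+p$ (i.e.\ $p_1\in\{p,2\}$), exactly as in \eqref{es-wmu}. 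Finally, letting $\varepsilon\to 0$ and using that $v\to w$ in $C^1_{loc}(Q_{3R/2}^\lambda)$ (so in particular uniformly on $\overline{Q_{R/2}^\lambda}$, and in $L^{\beta_1}(Q_R^\lambda)$) gives \eqref{ZA}; the constant $C$ depends only on $n$ and $p$ and not on $\beta_1$ because the product $\prod_j C^{1/\chi^j}$ converges to a finite bound uniformly in $\beta_1$.

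The main obstacle is the bookkeeping of exponents and scaling in the iteration: one must check that at each step the exponent $\mu_j+p$ appearing on the right (from the term with $|\nabla v|^{\mu+p}$) stays below the exponent $\mu_{j-1}+2=\chi^{j-1}\beta_1$ already controlled at the previous stage, so that H\"older's inequality closes the loop — this is the parabolic analogue of the $\mu+2$ versus $\mu+p$ juggling already seen in Lemma \ref{lemw}, and it is precisely where the restriction $\beta_1>\frac{n(2-p)}{2}$ (equivalently $q_0>\frac{n(2-p)}{p}$) is needed to make the starting exponent large enough. A secondary point requiring care is that the constant in the Gagliardo--Nirenberg step and the number $k$ of powers of $\psi$ must be chosen uniformly along the iteration so that $C$ does not blow up; since the excess exponent grows geometrically, the accumulated constant is a convergent product and the final $C$ is independent of $\beta_1$, as claimed.
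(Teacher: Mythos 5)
Your overall strategy coincides with the paper's: test the differentiated equation \eqref{eq-vxl} for the regularized solution $v$ with $v_{x_l}(|\nabla v|^2+\varepsilon^2)^{\mu/2}\psi^{m}$, combine the resulting energy bound with a parabolic Gagliardo--Nirenberg inequality to get a reverse-H\"older estimate, iterate in the Moser fashion, split into two cases to get the two exponents $\nu_0,\nu_1$, and then let $\varepsilon\to 0$. The paper indeed proceeds exactly this way, invoking the computation in Choe's work to reach the reverse-H\"older inequality \eqref{1.2}.

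There is, however, a concrete gap in your exponent bookkeeping, and it is precisely the point where the hypothesis $\beta_1>\frac{n(2-p)}{2}$ enters. You propose the exponent sequence $\mu_j+2=\beta_1\chi^j$ with $\chi=1+\frac{2}{n}$, i.e.\ pure geometric growth. But the reverse-H\"older inequality raises the exponent from $\mu+2$ (and $\mu+p$) to $\theta\mu+p+\frac{4}{n}$ with $\theta=\frac{n+2}{n}$. Writing $\beta=\mu+2$, a direct computation gives $\theta\mu+p+\frac{4}{n}=\theta\beta+p-2$, so the correct recursion is the \emph{affine} map $\beta_{k+1}=\theta\beta_k+(p-2)$, not $\beta_{k+1}=\theta\beta_k$. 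Since $p<2$ this map has a repelling fixed point at $\beta^*=\frac{2-p}{\theta-1}=\frac{n(2-p)}{2}$: the sequence $\beta_k$ increases to infinity if and only if $\beta_1>\frac{n(2-p)}{2}$, and otherwise it decreases or stagnates, so the iteration never reaches $L^\infty$. This is the real role of the restriction $\beta_1>\frac{n(2-p)}{2}$; it is not, as you suggest, about matching $\mu_j+p$ to $\mu_{j-1}+2$ (which is handled by an interior H\"older on the fixed cylinder and imposes no constraint). Your heuristic that the two exponents $\nu_0,\nu_1$ come from the two homogeneities $\mu+2$ and $\mu+p$ on the right-hand side is the right intuition; the paper implements it by splitting according to whether $B(1)\le 1$ or $B(1)>1$ and tracking which of $B(k)^{\beta_k}$ or $B(k)^{\beta_k+p-2}$ dominates. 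If you correct the recursion and make this case split explicit, your argument aligns with the paper's.
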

 \begin{proof} For simplicity, let $(x_0,t_0)=(0,0)$. For any $\varepsilon>0$, let $v$ be the solution of \eqref{eq-v}, then  $v\to w$ in $C^1_{loc}(Q_{2R}^\lambda)$ as $\varepsilon\rightarrow 0$. Let $R/2\leq R_2<R_1\leq R$ and $\psi$ be the one in Lemma \ref{lemw}.
 	Take $v_{x_l}(|\nabla v|^2+\varepsilon^2)^{\frac\mu{2}} \psi^{2}$ as a test function to \eqref{eq-vxl} and integrate in time we obtain
 	\begin{align*}
 		&\sup _{-\lambda^{p-2}R_1^2 \leq t \leq 0} \int_{B_{R_1}}(|\nabla v|^2+\varepsilon^2)^{\frac{\mu+2}{2}} \psi^{2} d x+\int_{Q^{\lambda}_{R_1}}\left|\nabla \left((|\nabla v|^2+\varepsilon^2)^{\frac{\mu+p}{4}}\psi\right)\right|^2  \\
 		&\quad\quad\quad\quad\quad\quad\leq \frac{C}{\lambda^{p-2}R^2} \int_{Q^{\lambda}_{R_1}}(|\nabla v|^2+\varepsilon^2)^{\frac{\mu+2}{2}} \psi +\frac{C}{R^{2}} \int_{Q^{\lambda}_{R_1}}(|\nabla v|^2+\varepsilon^2)^{\frac{\mu+p}{2}}.
 	\end{align*}
 Denote $\theta=\frac{n+2}{n}$,	we mimic the proof  of \cite[Theorem 4]{Choe1} in order to obtain that for any $\mu>p-2$
 	\begin{align*}
 		\fint_{Q_{R_2}^\lambda} (|\nabla v|^2+\varepsilon^2)^\frac{\theta \mu + p+\frac{4}{n}}{2}
 		\leq  \frac{C\lambda^{\frac{2(p-2)}{n}}}{(1-R_2/R_1)^{2\theta}}  \left( \lambda^{2-p} \fint_{Q_{R_1}^\lambda} (|\nabla v|^2+\varepsilon^2)^\frac{\mu+2}{2}    +  \fint_{Q_{R_1}^\lambda}  (|\nabla v|^2+\varepsilon^2)^\frac{\mu+p}{2}    \right)^\theta
 	\end{align*}
 	for any $R_2<R_1<8R_2$. Letting $\varepsilon\to 0$ gives
 	\begin{align}\label{1.2}
 		\fint_{Q_{R_2}^\lambda} |\nabla w|^{\theta \mu + p+\frac{4}{n}}
 		\leq  \frac{C\lambda^{\frac{2(p-2)}{n}}}{(1-R_2/R_1)^{2\theta}}  \left( \lambda^{2-p} \fint_{Q_{R_1}^\lambda} |\nabla w|^{\mu+2}   +  \fint_{Q_{R_1}^\lambda}  |\nabla w|^{\mu+p}    \right)^\theta.
 	\end{align}
 	Put  $V=|\nabla w|$.
 	Thus, for $\widetilde{V}=\lambda V$ we have
 	\begin{align*}
 		\fint_{Q_{R_2}^\lambda} \widetilde{V}^{\theta \mu + p+\frac{4}{n}}
 		\leq  \frac{C}{(1-R_2/R_1)^{2\theta}}  \left(  \fint_{Q_{R_1}^\lambda}  \widetilde{V}^{\mu+2}   +  \fint_{Q_{R_1}^\lambda}  \widetilde{V}^{\mu+p}   \right)^\theta, ~~\forall \mu>p-2.
 	\end{align*}
 	For any  $k\geq 1$, let us set $$r_k=(1+2^{-k+1})\frac{R}{2},~~
 	\beta_{k+1}=\theta\beta_{k}+p-2,~~ \beta_1>\frac{n(2-p)}{2}.
 	$$
 	Note that $\beta_1>\frac{n(2-p)}{2}$, hence the sequence $\{\beta_k\}_{k\geq 1}$ is  increasing. In fact
 	\begin{equation*}
 		\beta_{k}=\frac{n(2-p)}{2}+\left(\frac{n+2}{n}\right)^{k}\left(\beta_1-\frac{n(2-p)}{2}\right)>0.
 	\end{equation*}
 	Apply \eqref{1.2} with $\mu=\beta_k-2$,  $R_1=r_k$, $R_2=r_{k+1}$, for $k\geq 1$.  Then, we obtain
 	\begin{align*}
 		\fint_{Q_{r_{k+1}}^\lambda}\widetilde{V}^{\beta_{k+1}}\leq C2^{2\theta (k+1)} \left(\fint_{Q_{r_{k}}^\lambda}\widetilde{V}^{\beta_{k}}+\left[\fint_{Q_{r_k}^\lambda}\widetilde{V}^{\beta_{k}}\right]^{\frac{\beta_{k}+p-2}{\beta_{k}}}\right)^{\theta}.
 	\end{align*}
 	Therefore, there exists a constant $C=C(n)\geq 2$, such that
 	\begin{equation}\label{1.3}
 		B(k+1)\leq C^{\frac{k}{\beta_{k+1}}} \left(B(k)^{\beta_{k}}+B(k)^{\beta_{k}+p-2}\right)^{\frac{\theta}{ \beta_{k+1} }},
 	\end{equation}
 	with
 	$B(k)=\left(\fint_{Q_{r_{k}}^\lambda}\tilde{V}^{\beta_{k}}\right)^{\frac{1}{\beta_{k}}}$. In the following, we discuss the cases $B(1)\leq 1$ and $B(1)> 1$.
 	\\
 	
 	If  $B(1)\leq 1$,  then
 	we show  that
 	\begin{equation}\label{Z1}
 		B(k+1)\leq C^{b_{k+1}} B(1)^{\prod_{i=1}^{k}\frac{\theta(\beta_{i}+p-2)}{\beta_{i+1}}}  ,
 	\end{equation}
 	where
 	$$b_{k+1}=\frac{k}{\beta_{k+1}}+\frac{\theta}{\beta_{k+1}}+\left(1+\frac{\theta}{\beta_{k+1}}\right)b_{k},~~ b_1=0  .$$
 	We prove \eqref{Z1} by induction.
 	Indeed, it follows from \eqref{1.3}  and the assumptions $B(1)\leq 1$, $  p<2$  that
 	\begin{equation*}
 		B(2)\leq C^{\frac{1}{\beta_{2}}} \left(2B(1)^{\beta_{1}+p-2}\right)^{\frac{\theta}{\beta_{2}}}\leq C^{\frac{1}{\beta_{2}}+\frac{\theta}{\beta_{2}}}B(1)^{\frac{\theta(\beta_{1}+p-2)}{\beta_{2}}}.
 	\end{equation*}
 Hence \eqref{Z1} is true for $k=2$.  Assume that \eqref{Z1} holds for $k=m$.  Since $B(m+1)\leq C^{b_{m+1}}$,  and $\frac{\theta(\beta_{m+1}+p-2)}{\beta_{m+2}}\leq 1$,  we have from \eqref{1.3} that
 	\begin{align*}
 		B(m+2)&\leq C^{\frac{m+1}{\beta_{m+2}}} \left(B(m+1)^{\beta_{m+1}}+B(m+1)^{\beta_{m+1}+p-2}\right)^{\frac{\theta}{\beta_{m+2}}}
 		\\& \leq C^{\frac{m+1}{\beta_{m+2}}} \left(C^{1+b_{m+1}} B(m+1)^{\beta_{m+1}+p-2}\right)^{\frac{\theta}{\beta_{m+2}}}\\& =C^{\frac{m+1}{\beta_{m+2}}+\frac{\theta}{\beta_{m+2}}+\frac{\theta b_{m+1}}{\beta_{m+2}}} B(m+1)^{\frac{\theta(\beta_{m+1}+p-2)}{\beta_{m+2}}} \\&\leq  C^{b_{m+2}} B(1)^{\prod_{i=1}^{m+1}\frac{\theta(\beta_{i}+p-2)}{\beta_{i+1}}} .
 	\end{align*} This implies that \eqref{Z1} also holds for $k=m+1$. Hence \eqref{Z1} holds for any $k\geq 2$.
 	
 	By passing to the limit as $k\rightarrow \infty$ in \eqref{Z1}, we get
 	\begin{equation}\label{1.4a}
 		\|\widetilde{V}\|_{L^\infty(Q_{R/2}^\lambda)}=B(\infty)\leq C^{\lim\limits_{k\to \infty}b_{k}} B(1)^{\prod_{i=1}^{\infty}\frac{\theta(\beta_{i}+p-2)}{\beta_{i+1}}} = C'B(1)^{\nu_0} .
 	\end{equation}
 	Here $\nu_0=\prod_{i=1}^{\infty}\frac{\theta\frac{(n-2)(2-p)}{2}+\theta^{i+1}\left(\beta_{1}-\frac{n(2-p)}{2}\right)}{\frac{n(2-p)}{2}+\theta^{i+1}\left(\beta_{1}-\frac{n(2-p)}{2}\right)}$ and  $C'$ is independent of $\beta_1$.

 	If   $B(1)> 1$, and   our argument above  can be adapted to obtain
 	\begin{equation*}
 		B(k+1)\leq C^{\tilde\mu_{k+1}} B(1)^{\prod_{i=1}^{k}\frac{\theta\beta_{i}}{\beta_{i+1}}},
 	\end{equation*}
 	where
 	$$\tilde\mu_{k+1}=\frac{k}{\beta_{k+1}}+\frac{\theta}{\beta_{k+1}}+\frac{\theta}{\beta_{k+1}}\tilde\mu_{k}, ~~\tilde\mu_1=0 .$$
 Then there is a constant $C>0$ such that
 	\begin{equation}\label{1.4}
 		\|\widetilde{V}\|_{L^\infty(Q_{R/2}^\lambda)}\leq  C B(1)^{\nu_1},
 	\end{equation}
 	where
 	$
 	\nu_1=\prod_{i=1}^{\infty}\frac{\frac{\theta n(2-p)}{2}+\theta^{i+1}\left(\beta_{1}-\frac{n(2-p)}{2}\right)}{\frac{n(2-p)}{2}+\theta^{i+1}\left(\beta_{1}-\frac{n(2-p)}{2}\right)}.
 	$
 	A combination of \eqref{1.4a} and \eqref{1.4} implies \eqref{ZA}.
 \end{proof}
 \begin{remark}
	We have
	\begin{align*}
		\exp\left\{-2(2-p)\left(\beta_1-\frac{n(2-p)}{2}\right)^{-1}\right\} \leq \nu_0\leq\nu_1\leq \exp\left\{\frac{n(2-p)}{2}\left(\beta_1-\frac{n(2-p)}{2}\right)^{-1}\right\}.
	\end{align*}
We also note that $p=2$ iff $\nu_0=\nu_1=1$.
\end{remark}
\begin{proposition}\label{prop1}
	Let $u$ be a weak solution of \eqref{e1}. Then for any $\eta\in(0,1), \kappa_0\in (0,1)$, there exists $\mathbf{h}\in  L^\infty(Q_{R/2}^\lambda(x_0,t_0))$  and $\frac{n(2-p)}{2}<\sigma_1<n+p$ such that
	\begin{align}\label{Linftyofw}
		\lambda\|\mathbf{h}\|_{L^\infty(Q_{R/2}^\lambda(x_0,t_0))}\leq&
		C G_{\gamma_0}\left(\left(\fint_{Q_{2R}^\lambda(x_0,t_0)}|\lambda\nabla u|^{\sigma_1} \right)^{\frac{1}{\sigma_1}}\right)+ C(\eta,\kappa_0)G_{\gamma_0}\left(\left(\fint_{Q_{2R}^\lambda(x_0,t_0)}|\lambda F|^{\sigma_1} \right)^{\frac{1}{\sigma_1}}\right)\\
		&\quad\quad+\eta G_{\gamma_0}\left(\left(\fint_{Q_{2R}^\lambda(x_0,t_0)}|\lambda\nabla u|^{\sigma_1+\kappa_0} \right)^{\frac{1}{\sigma_1+\kappa_0}}\right),\nonumber
	\end{align}
	and
	\begin{align*}
		\left(\fint_{Q_{R/2}^\lambda(x_0,t_0)}|\lambda(\nabla u-\mathbf{h})|^{\sigma_1} \right)^{\frac{1}{\sigma_1}}&\leq \eta G_{\gamma_0}\left(\left(\fint_{Q_{2R}^\lambda(x_0,t_0)}|\lambda\nabla u|^{\sigma_1+\kappa_0} \right)^{\frac{1}{\sigma_1+\kappa_0}}\right)\\&\qquad + C(\eta,\kappa_0)G_{\gamma_0}\left(\left(\fint_{Q_{2R}^\lambda(x_0,t_0)}|\lambda F|^{\sigma_1} \right)^{\frac{1}{\sigma_1}}\right),
	\end{align*}
with $G_{\gamma_0}$ defined in \eqref{formularG}
	for some $\gamma_0\in (0,1)$, provided $[a]_{2R}\leq \eta^{\frac{n^4}{\kappa_0^4(p-1)^4}} $, $\nabla u\in L^{\sigma_1+\kappa_0}(Q_{2R}^\lambda(x_0,t_0))$ and $F\in L^{\sigma_1}(Q_{2R}^\lambda(x_0,t_0))$.
\end{proposition}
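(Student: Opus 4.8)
The natural choice is $\mathbf h:=\nabla w$, with $w$ the solution of the homogeneous comparison problem \eqref{e2} on $Q_{2R}^\lambda(x_0,t_0)$. I would first fix $\vartheta>\tfrac{2n}{p}-(2+n)$ so that $\sigma=p+\vartheta+\tfrac{p(\vartheta+2)}{n}$ and $\sigma_1=p+\tfrac{n\vartheta}{n+\vartheta+2}$ of Lemma~\ref{u-w} take admissible values, choosing $\vartheta$ so that $\sigma_1$ sits close to $n+p$ (this makes the quantitative bookkeeping below uniform in $\kappa_0$); note the restriction $1<p\le\tfrac{2n}{n+2}$ forces $\sigma_1\ge\tfrac{n(2-p)}{2}\ge p$, which is exactly why $L^{\sigma_1}$- rather than $L^p$-norms of $\nabla u$ and $F$ will appear. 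Two arithmetic facts are then recorded: $q_0:=\tfrac{2\sigma_1}{p}<\sigma$ (true for every $\vartheta$ above the threshold precisely because $p<2$, since the gap $\sigma-\tfrac{2\sigma_1}{p}$ vanishes at the threshold with strictly positive, increasing $\vartheta$-derivative), and $\sigma_2=\min\{\tfrac{2\sigma_1(n+p)}{2\sigma_1+(n+2)p},2\}<\sigma_1$. By Lemma~\ref{lemw} with $\beta_1=\sigma_1$ (whose right-hand side is finite once the oscillation term $A$ is, which follows from \eqref{lem1-case1} and the hypotheses on $\nabla u,F$) together with Lemma~\ref{z}, $\nabla w\in L^\infty(Q_{R/2}^\lambda)$, so $\mathbf h$ is admissible; it remains to prove the two estimates, for which I write $\Phi:=(\fint_{Q_{2R}^\lambda}|\lambda\nabla u|^{\sigma_1})^{1/\sigma_1}$, $\Psi:=(\fint_{Q_{2R}^\lambda}|\lambda F|^{\sigma_1})^{1/\sigma_1}$ and $\Xi:=(\fint_{Q_{2R}^\lambda}|\lambda\nabla u|^{\sigma_1+\kappa_0})^{1/(\sigma_1+\kappa_0)}$.

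For the $L^\infty$ bound I chain the lemmas. Lemma~\ref{z} with $\beta_1=\sigma_1$ gives $\lambda\|\nabla w\|_{L^\infty(Q_{R/2}^\lambda)}\lesssim P^{\nu_0}+P^{\nu_1}\lesssim G_{\gamma_0}(P)$ for any $\gamma_0\le\min\{\nu_0,1/\nu_1\}$ (recall $0<\nu_0<1<\nu_1$), where $P:=(\fint_{Q_R^\lambda}|\lambda\nabla w|^{\sigma_1})^{1/\sigma_1}$. Lemma~\ref{lemw} bounds $P^{\sigma_1}$ by $A^{2\sigma_1/p}+A+G_{\frac{2p-2}{p}}(\fint_{Q_{2R}^\lambda}|\lambda\nabla w|^p)$. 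The $L^p$-term is controlled by the triangle inequality, \eqref{lem1-case3} and Hölder (as $p\le\sigma_1$): $(\fint_{Q_{2R}^\lambda}|\lambda\nabla w|^p)^{1/p}\lesssim\Phi+C(\eta)\Psi$. For $A$ I split $w-w_{2R}=(w-u)+(u-u_{2R})+(u_{2R}-w_{2R})$: the first and third pieces are absorbed by the mixed-norm estimate \eqref{lem1-case1}, after nesting the $(q_0,\sigma_2)$-norm into the $(\sigma,2)$-norm using $q_0<\sigma$ and $\sigma_2\le2$; the middle piece is treated by Sobolev--Poincar\'e in space and H\"older in time, legitimately since $q_0^\ast=\tfrac{nq_0}{n+q_0}<\sigma_1$ (automatic as $\sigma_1>\tfrac{n(2-p)}{2}$) and $\sigma_2<\sigma_1$, giving a contribution $\lesssim\Phi$. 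Thus $A\lesssim C(\eta,\kappa_0)\Psi^{\frac{n+p}{n+2}}+\eta\,\Xi^{\frac{n+p}{n+2}}+\Phi$. Substituting back, everything that appears is a power $s^\alpha$ of one of $\Phi,\Psi,\Xi$ with $\alpha$ in a bounded interval depending only on $n,p,\sigma_1$; since $s^\alpha\le G_{\gamma_0}(s)$ whenever $\gamma_0\le\alpha\le\gamma_0^{-1}$, after enlarging the finite set that defines $\gamma_0$ so that it also dominates all these algebraic exponents, one obtains the first estimate, with the $\Xi$-coefficient equal to $\eta$ once Lemma~\ref{u-w} is invoked with a fixed power $\eta^M$ ($M=M(n,p,\sigma_1)\ge1$) of $\eta$ in place of $\eta$.

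For the closeness estimate, $\nabla u-\mathbf h=\nabla(u-w)$, which \eqref{lem1-case3} controls in $L^p(Q_{2R}^\lambda)$. To pass to $L^{\sigma_1}(Q_{R/2}^\lambda)$ I interpolate, $\|\nabla(u-w)\|_{L^{\sigma_1}}\le\|\nabla(u-w)\|_{L^p}^{1-\tau}\|\nabla(u-w)\|_{L^{\sigma_1+\kappa_0}}^{\tau}$ with $\tfrac1{\sigma_1}=\tfrac{1-\tau}{p}+\tfrac{\tau}{\sigma_1+\kappa_0}$, estimate $(\fint_{Q_{R/2}^\lambda}|\lambda\nabla(u-w)|^{\sigma_1+\kappa_0})^{1/(\sigma_1+\kappa_0)}\lesssim\Xi+\lambda\|\nabla w\|_{L^\infty(Q_{R/2}^\lambda)}$ (the last term already bounded by the first part), insert the $L^p$-comparison bound from \eqref{lem1-case3}, and close with Young's inequality; using $\Phi\le\Xi$, $G_{\gamma_0}(\Phi)\le G_{\gamma_0}(\Xi)$ and $(\fint|\lambda F|^p)^{1/p}\le\Psi$ one pushes every surviving $\nabla u$-term onto $G_{\gamma_0}(\Xi)$ with coefficient $\le\eta$ and collects the data into $C(\eta,\kappa_0)G_{\gamma_0}(\Psi)$. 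Choosing $\sigma_1$ near $n+p$ keeps $\tau$ bounded away from $1/2$ uniformly in $\kappa_0$, which is what makes the Young constants behave.

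The step I expect to be the main obstacle is the quantitative bookkeeping that glues these pieces: one must run Lemmas~\ref{u-w}, \ref{lemw}, \ref{z} with an internal smallness parameter that is a fixed power $\eta^M$ of the target $\eta$, verify that the resulting requirement on $[a]_{2R}$ is still implied by the (deliberately very generous) hypothesis $[a]_{2R}\le\eta^{n^4/(\kappa_0^4(p-1)^4)}$, and check that a single $\gamma_0\in(0,1)$ --- built from $\nu_0,\nu_1$ and the finitely many algebraic exponents produced above --- simultaneously controls every term through $s^\alpha\le G_{\gamma_0}(s)$. The genuinely new analytic content lives entirely in Lemmas~\ref{u-w} and \ref{lemw}; here the only non-trivial compatibility is $q_0=\tfrac{2\sigma_1}{p}<\sigma$, which is precisely the singular-range mechanism letting the $L^\sigma$-oscillation estimate \eqref{lem1-case1} feed the higher-integrability estimate \eqref{Dwbeta}.
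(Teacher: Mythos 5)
Your proposal is correct and follows essentially the same route as the paper: take $\mathbf h=\nabla w$ with $w$ solving \eqref{e2}, chain Lemmas \ref{lemw} and \ref{z} for the $L^\infty$ bound, split $w-w_{2R}$ using \eqref{lem1-case1} for the $u-w$ pieces and Poincar\'e--Sobolev for the $u-u_{2R}$ piece to control $A$, use \eqref{lem1-case3} plus H\"older for the $L^p$-level of $\nabla w$, and interpolate $L^{\sigma_1}$ between $L^p$ and $L^{\sigma_1+\kappa_0}$ for the closeness estimate. The only cosmetic deviation is the parameterization in Lemma \ref{lemw}: you take $\beta_1=\sigma_1$ so that $q_0=2\sigma_1/p<\sigma$, while the paper picks $\beta_1$ so that $q_0=\sigma$ exactly (which makes the H\"older step in $x$ trivial); both are admissible, and your side remark about pushing $\sigma_1$ near $n+p$ to make constants ``uniform in $\kappa_0$'' is unnecessary since $C(\eta,\kappa_0)$ is allowed to depend on $\kappa_0$.
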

\begin{proof}
	For simplicity, we assume that $(x_0,t_0)=(0,0)$. Let $\mathbf{h}=\nabla w$ where $w$ is the weak solution to \eqref{e2}. Then it follows from \eqref{ZA} in Lemma \ref{z} and \eqref{Dwbeta} in Lemma \ref{lemw} that
	\begin{align}
		\lambda\|\mathbf{h}\|_{L^\infty(Q_{R/2}^\lambda)}&\lesssim \left(\fint_{Q_{R}^\lambda}|\lambda\nabla w|^{\beta_{1}}\right)^{\frac{\nu_0}{\beta_{1}}}+\left(\fint_{Q_{R}^\lambda}|\lambda\nabla w|^{\beta_{1}}\right)^{\frac{\nu_1}{\beta_{1}}}\nonumber\\
		&\lesssim A^{\frac{\nu_0}{\beta_1}}+A^{\frac{2\nu_1}{p}}+
		\left(G_{\frac{2p-2}{p}}\left(\fint_{Q^\lambda_{2R}}|\lambda\nabla w|^{p}\right)\right)^{\frac{\nu_0}{\beta_1}}+\left(G_{\frac{2p-2}{p}}\left(\fint_{Q^\lambda_{2R}}|\lambda\nabla w|^{p}\right)\right)^{\frac{\nu_1}{\beta_1}}\label{DwLinfty},
	\end{align}
	where $\beta_{1}>\frac{n(2-p)}{2}$ and $\nu_0$, $\nu_1$ are defined in Lemma \ref{z} and $A$ is defined by \eqref{def-A}.\\
	By \eqref{lem1-case1} in Lemma \ref{u-w}, for $q_0=\frac{2\beta_1}{p}>\frac{n(2-p)}{p}$, $A$ can be estimated by
	\begin{align*}
		A
		\leq &C\left[\fint_{-\lambda^{p-2}
			(2R)^2}^0\left(\fint_{B_{2R}}
		\left(\frac{\lambda}{R}|u-w|\right)^{q_0}dx\right)^\frac{\sigma_2}{q_0}dt\right]
		^\frac{1}{\sigma_2}\\
		&\quad\quad\quad\quad+C\left[\fint_{-\lambda^{p-2}
			(2R)^2}^0\left(\fint_{B_{2R}}
		\left(\frac{\lambda}{R}|u-u_{2R}|\right)^{q_0}dx\right)^\frac{\sigma_2}{q_0}dt\right]
		^\frac{1}{\sigma_2}\\
		\leq &	C(\eta,\kappa_0)\left(\fint_{Q_{2R}^\lambda}|\lambda F|^{\sigma_1}\right)^{\frac{n+p}{\sigma_1(n+2)}}+
		\eta^\frac{\beta_1}{\nu_0}\left(\fint_{Q_{2R}^\lambda}|\lambda \nabla u|^{\sigma_1+\kappa_0}\right)^{\frac{n+p}{(\sigma_1+\kappa_0)(n+2)}}\\
		&\quad\quad\quad\quad+C\left[\fint_{-\lambda^{p-2}
			(2R)^2}^0\left(\fint_{B_{2R}}
		\left(\frac{\lambda}{R}|u-u_{2R}|\right)^{q_0}dx\right)
		^\frac{\sigma_2}{q_0}dt\right]^\frac{1}{\sigma_2},
	\end{align*}
	where
	\[\sigma_1=\frac{q_0(n+p)}{q_0+n+2}> \frac{n(2-p)}{2},\quad\quad\quad \sigma_2=\min\{\sigma_1,2\}.\]
	By Poincar\'e--Sobolev's inequality (see \cite{DiBenedetto1}),
	\begin{align*}
		\left[\fint_{-\lambda^{p-2}
			(2R)^2}^0\left(\fint_{B_{2R}}
		\left(\frac{\lambda}{R}|u-u_{2R}|\right)^{q_0}dx\right)
		^\frac{\sigma_2}{q_0}dt\right]^\frac{1}{\sigma_2}&\lesssim \left[\fint_{-\lambda^{p-2}
			(2R)^2}^0\left(\fint_{B_{2R}}
		|\lambda\nabla u|^{\frac{nq_0}{n+q_0}}dx\right)
		^\frac{\sigma_2(n+q_0)}{nq_0}dt\right]^\frac{1}{\sigma_2}\\
		&\lesssim \left(\fint_{Q^\lambda_{2R}}
		|\lambda\nabla u|^{\sigma_1}dxdt\right)^\frac{1}{\sigma_1},
	\end{align*}
	where we also used the fact that $\frac{nq_0}{n+q_0}\leq \sigma_1$ and $\sigma_2\leq \sigma_1$.
	Hence
	\begin{equation}\label{es-A}
		A\leq	C(\eta,\kappa_0)\left(\fint_{Q_{2R}^\lambda}|\lambda F|^{\sigma_1}\right)^{\frac{n+p}{\sigma_1(n+2)}}+
		\eta^\frac{\beta_1}{\nu_0}\left(\fint_{Q_{2R}^\lambda}|\lambda \nabla u|^{\sigma_1+\kappa_0}\right)^{\frac{n+p}{(\sigma_1+\kappa_0)(n+2)}}+C\left(\fint_{Q_{2R}^\lambda}|\lambda \nabla u|^{\sigma_1}\right)
		^{\frac{1}{\sigma_1}}.
	\end{equation}
	Meanwhile, applying \eqref{lem1-case3} one has
	\begin{align}\label{es-Yw}
		\left(\fint_{Q_{2R}^\lambda} |\lambda\nabla w|^{p}\right)^{\frac{1}{p}}&\leq C\left(\fint_{Q_{2R}^\lambda} |\lambda\nabla (u-w)|^{p}\right)^{\frac{1}{p}}+C\left(\fint_{Q_{2R}^\lambda} |\lambda\nabla u|^{p}\right)^{\frac{1}{p}}\\
		&\leq C(\eta)\fint_{Q_{2R}^\lambda}  |\lambda F|^{p}+\eta^\frac{\beta_1}{\nu_0}\left(\fint_{Q_{2R}^\lambda}|\lambda\nabla u|^{\sigma_1}\right)^\frac{p}{\sigma_1}+C\left(\fint_{Q_{2R}^\lambda}  |\lambda \nabla u|^{p}\right)^\frac{1}{p}.\nonumber
	\end{align}
	Plugging \eqref{es-A} and \eqref{es-Yw} in \eqref{DwLinfty} gives \eqref{Linftyofw}.

	Next, using interpolation inequality and Young's inequality, one has
	\begin{align*}
		\left(\fint_{Q_{R/2}^\lambda}|\lambda(\nabla u-\mathbf{h})|^{\sigma_1} \right)^{\frac{1}{\sigma_1}}
		&\leq C(\eta,\kappa_0)\left(\fint_{Q_{R/2}^\lambda}|\lambda\nabla(u-w)|^{p} \right)^{\frac{1}{p}}+\eta^2\left(\fint_{Q_{R/2}^\lambda}|\lambda\nabla u|^{\sigma_1+\kappa_0} \right)^{\frac{1}{\sigma_1+\kappa_0}}\\&\quad\quad\quad\quad+\eta^2\left(\fint_{Q_{R/2}^\lambda}|\lambda\nabla w|^{\sigma_1+\kappa_0} \right)^{\frac{1}{\sigma_1+\kappa_0}}.
	\end{align*}
	By \eqref{lem1-case3}, \eqref{Linftyofw} and H\"{o}lder's inequality, we have
	\begin{align*}
		&\left(\fint_{Q_{R/2}^\lambda}|\lambda(\nabla u-\mathbf{h})|^{\sigma_1} \right)^{\frac{1}{\sigma_1}}\\
		&\leq C(\eta,\kappa_0) \left(\fint_{Q_{R/2}^\lambda}  |\lambda F|^{p}\right)^\frac{1}{p} +2\eta^2\left(\fint_{Q_{R/2}^\lambda}|\lambda\nabla u|^{\sigma_1+\kappa_0} \right)^{\frac{1}{\sigma_1+\kappa_0}}+\eta^2 \lambda\|\mathbf{h}\|_{L^\infty(Q_{R/2}^\lambda)}   \\
		&\leq C(\eta,\kappa_0)G_{\gamma_0}\left(\left(\fint_{Q_{2R}^\lambda}|\lambda F|^{\sigma_1} \right)^{\frac{1}{\sigma_1}}\right)+\eta G_{\gamma_0}\left(\left(\fint_{Q_{2R}^\lambda}|\lambda\nabla u|^{\sigma_1+\kappa_0} \right)^{\frac{1}{\sigma_1+\kappa_0}}\right).
	\end{align*}
	Then we finish the proof.
\end{proof}
\begin{remark}\label{repr1}
	Note that if $p>\frac{2n}{n+2}$, we do not need Lemma \ref{lemw}. Applying Lemma \ref{z} with $\beta_1=p$, we obtain the $L^\infty$ estimate of $\nabla w$. Then Proposition \ref{prop1} holds with $\sigma_1=p$ and $\kappa_0=0$.
\end{remark}
\subsection{Boundary estimates}

In this subsection, we consider the corresponding estimates near the boundary. Let $\Omega$ be a bounded domain in $\mathbb{R}^n$ with $C^2$ boundary and let $x_0\in \partial \Omega$. For $x\in\mathbb{R}^n$, we write $x=(x',x_n)$ where $x'\in \mathbb{R}^{n-1}$ and $x_n\in \mathbb{R}$. Since $\partial\Omega$ is $ C^2$, there exist $\bar R>0$ and a $C^2$ function $\mathfrak{g}:\mathbb{R}^{n-1}\to \mathbb{R}$ such that
\[\Omega \cap B_{\bar R}(x_0)=\{x=(x',x_n)\in B_{\bar R}(x_0): x_n>\mathfrak{g}(x')\},\]
and
\begin{equation}\label{estg}
	|\nabla\mathfrak{g}(x')|\lesssim |x'-x_0'|,\quad 	|\nabla^2\mathfrak{g}(x')|\lesssim 1,
\end{equation}
for any $|x'-x_0'|<\bar R$. With $u$ being a weak solution to \eqref{e1}, we consider the unique solution
\begin{equation*}
	w\in L^p((-\lambda^{p-2}(4 R)^2+t_0,t_0), W_{0}^{1,p}(\Omega_{4 R}(x_0)))+u
\end{equation*}
to the following equation
\begin{equation}\label{eqwbdry}
	\left\{ \begin{array}{ll}
		w_t -\operatorname{div}((a(t))_{\Omega_{4 R}(x_0)}|\nabla w|^{p-2}\nabla w)= 0\ & \text{in}\ (-\lambda^{p-2}(4R)^2+t_0,t_0)\times \Omega_{4R}(x_0), \\
		w = u \ & \text{on}\ (-\lambda^{p-2} (4R)^2+t_0,t_0)\times \partial \Omega_{4 R}(x_0), \\
	\end{array} \right.
\end{equation}
where we define $\Omega_{4 R}(x_0)=\Omega\cap B_{4 R}(x_0)$, and $0<R< R_0$ with $R_0$ will be fixed later in our proof. We have the following boundary counterpart of Lemma \ref{lemw}.
\begin{lemma}\label{lemBdry1}
  Let $w$ be a weak solution to equation \eqref{eqwbdry}. There exists $R_0>0$ small depending on $\partial \Omega$ such that for any $(x_0,t_0)\in \partial\Omega\times [0,T]$, any $0<R< R_0$,
   and any $\beta_1>\frac{n(2-p)}{2}$, we have
 \begin{align}\label{es-dwbebdry}
	\fint_{{{Q}}^\lambda_{R/2}(x_0,t_0)}(\lambda |\nabla  {w}|)^{\beta_1}\
	\leq C
	{A}_1
	^\frac{2\beta_1}{ p}+C	{A}_1+CG_{\frac{2p-2}{p}}\left(\fint_{{Q}^\lambda_{4R}(x_0,t_0)}\left(\lambda|\nabla{w}|\right)^{p}\right),
	\end{align}
where
\begin{equation}\label{def-A1}
{A}_1=\left[\fint_{t_0-\lambda^{p-2}
		(4R)^2}^{t_0}\left(\fint_{B_{4R}(x_0)\cap \Omega} \left(\frac{\lambda}{R}|{w}|\right)^{q_0}dx\right)^\frac{\sigma_2}{q_0}dt\right]
	^\frac{1}{\sigma_2},
\end{equation}
with $q_0=\frac{2\beta_1}{p}>\frac{n(2-p)}{p}$ and $\sigma_2=\min\{\frac{2\beta_1(n+p)}{2\beta_1+(n+2)p},2\}$. And the function $G_{\frac{2p-2}{p}}$ is defined in \eqref{formularG}.
\end{lemma}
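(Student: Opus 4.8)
The plan is to mirror the proof of Lemma~\ref{lemw}, running the Choe-type iteration behind Lemma~\ref{z} up to a flattened portion of $\partial\Omega$; the only genuinely new feature is that the homogeneous Dirichlet datum on $\partial\Omega$ lets us perform the Poincar\'e--Sobolev step \emph{without} subtracting mean values, which is exactly why the quantity $A_1$ in \eqref{def-A1} involves $|w|$ rather than $|w-w_{4R}|$. Concretely, near $x_0$ I would flatten the boundary: using the graph representation $\Omega\cap B_{\bar R}(x_0)=\{x_n>\mathfrak g(x')\}$ and \eqref{estg}, the map $\Phi(x)=(x'-x_0',\,x_n-\mathfrak g(x'))$ is a $C^2$ diffeomorphism with $\det D\Phi\equiv1$ that sends $\Omega_{4R}(x_0)$ onto a region containing a half-ball $B_{cR}^+:=B_{cR}\cap\{x_n>0\}$ and flattens $\partial\Omega\cap B_{4R}(x_0)$ onto $\{x_n=0\}$. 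Pulling back and writing the transformed function again as $w$, equation \eqref{eqwbdry} becomes a quasilinear system of the same $p$-structure (with no Jacobian weight, since $\det D\Phi\equiv1$)
\[
w_t-\operatorname{div}\big((a(t))_{\Omega_{4R}(x_0)}\,\langle\mathbf A(x)\nabla w,\nabla w\rangle^{\frac{p-2}{2}}\mathbf A(x)\nabla w\big)=0 ,
\]
where $\mathbf A(x)=(D\Phi\,D\Phi^{T})(x)$ is symmetric, uniformly elliptic, $C^1$, and satisfies $\mathbf A(x_0)=\mathrm{Id}$, $|\mathbf A(x)-\mathrm{Id}|\lesssim|x'-x_0'|\lesssim R_0$, $|\nabla\mathbf A|\lesssim1$ by \eqref{estg}. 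Since $u=0$ on $\partial\Omega$ and $w=u$ on $\partial\Omega_{4R}(x_0)$, the pulled-back $w$ vanishes on the flat piece $\{x_n=0\}\cap B_{cR}$.

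Next I would repeat the computation of Lemma~\ref{lemw} on the half-ball. Introduce the non-degenerate approximation $v$ solving the analogue of \eqref{eq-v} with coefficient matrix $\mathbf A$ and datum $v=w$ on the parabolic boundary; by the boundary $C^{1,\alpha}$ regularity theory for such systems, $v$ is smooth inside, $C^1$ up to $\{x_n=0\}$, and $v\to w$ in $C^1_{loc}$ up to the flat boundary as $\varepsilon\to0$. Because $v\equiv0$ on $\{x_n=0\}$, every tangential derivative $v_{x_l}$ ($1\le l\le n-1$) also vanishes there, so $v_{x_l}(|\nabla v|^2+\varepsilon^2)^{\mu/2}\psi^{k+1}$ is an admissible test function for the $x_l$-differentiated system with a cutoff $\psi$ supported in the \emph{full} ball $B_{R_1}$, which need not vanish on the flat boundary part; the missing second derivative $v_{x_nx_n}$ is recovered algebraically from the equation — its coefficient $b_{nn}$ is bounded below — in terms of the tangential ones, so the Caccioppoli-type quantities $\int(|\nabla v|^2+\varepsilon^2)^{\frac{\mu+p-2}{2}}|\nabla^2v|^2\psi^{k+1}$ and $\sup_t\int(|\nabla v|^2+\varepsilon^2)^{\frac{\mu+2}{2}}\psi^{k+1}$ are controlled exactly as in the interior case. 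The extra contributions produced by $|\mathbf A-\mathrm{Id}|\lesssim R_0$ and by $\nabla\mathbf A$ are of lower order; choosing $R_0$ small they are absorbed.

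Then comes the interpolation/Sobolev step. In Lemma~\ref{lemw} these inequalities were applied to $v-v_{R_1}$; here, since $v$ has zero trace on $\{x_n=0\}$, both the Sobolev embedding $\|v\|_{L^{np/(n-p)}(B_{R_1}^+)}\lesssim\|\nabla v\|_{L^p(B_{R_1}^+)}$ and the parabolic Poincar\'e--Sobolev estimate $[\fint_{-\lambda^{p-2}(4R)^2}^{0}(\fint_{B_{4R}^+}(\tfrac{\lambda}{R}|v|)^{q_0})^{\sigma_2/q_0}dt]^{1/\sigma_2}\lesssim(\fint_{Q^\lambda_{4R}}|\lambda\nabla v|^{\sigma_1})^{1/\sigma_1}$ hold \emph{without} any mean value, which is precisely the reason $A_1$ in \eqref{def-A1} is written with $|w|$. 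With this replacement the whole chain of Lemma~\ref{lemw} — the reverse-H\"older bootstrap in $\mu$, Young's inequality, and the finite iteration along cylinders shrinking from $Q^\lambda_{4R}(x_0,t_0)$ to $Q^\lambda_{R/2}(x_0,t_0)$ — carries over verbatim, and transferring back through $\Phi^{-1}$ yields \eqref{es-dwbebdry} with $A_1$ and $G_{\frac{2p-2}{p}}$ as in \eqref{Dwbeta}.

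I expect the main obstacle to be the boundary bookkeeping rather than any new inequality: one must verify that the regularized solutions are $C^1$ up to the flat boundary with bounds stable as $\varepsilon\to0$, that tangential derivatives are legitimate test functions while the normal one is eliminated through the equation, and — since flattening a merely $C^2$ boundary yields only $\mathbf A\in C^1$ with $\nabla\mathbf A\in L^\infty$ — that the perturbative terms are genuinely absorbed by shrinking $R_0$. This smallness of $R_0$, coming through \eqref{estg}, is where the $C^2$-regularity of $\partial\Omega$ enters.
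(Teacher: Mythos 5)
Your proposal takes a genuinely different route from the paper's. After the same $C^2$-flattening of the boundary, the paper does \emph{not} attempt to run the Caccioppoli argument on the half-ball: instead it performs an \emph{odd extension} of the flattened solution $\widetilde w$ across $\{y_n=0\}$, obtaining a function $\overline w$ solving a full-ball equation $\overline w_t-\operatorname{div}_y((a(t))_{\Omega_{4R}}\mathbf A_0(y,\nabla_y\overline w))=0$ in $Q^\lambda_{3R}$, and then reruns, almost verbatim, the interior argument of Lemma~\ref{lemw} on that extended system, returning to the half-ball at the very end by restriction. The only new ingredients compared to the interior lemma are the perturbative terms $|\partial_\xi\mathbf A(y,\xi)-\partial_\xi\mathbf A(0,\xi)|\lesssim|y|(|\xi|^2+\varepsilon^2)^{(p-2)/2}$ and $|\partial_y\mathbf A|\lesssim(|\xi|^2+\varepsilon^2)^{(p-1)/2}$ coming from \eqref{estg}, which are absorbed exactly as you anticipate by taking $R_0$ small; and, as you correctly observe, the zero trace is what allows $A_1$ to be written with $|w|$ rather than $|w-w_{4R}|$. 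So the structure and the role of $R_0$ and of the $C^2$ regularity are exactly as you predicted — the difference is entirely in how the flat boundary is handled.

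The gap in your half-ball route is the sentence claiming that ``the missing second derivative $v_{x_nx_n}$ is recovered algebraically from the equation $\dots$ in terms of the tangential ones.'' The pointwise relation supplied by the equation reads, schematically,
\begin{equation*}
b_{nn}\,(|\nabla v|^2+\varepsilon^2)^{\frac{p-2}{2}}\,v_{x_nx_n}
=\frac{v_t}{(a(t))}-\sum_{(i,j)\ne(n,n)}b_{ij}(|\nabla v|^2+\varepsilon^2)^{\frac{p-2}{2}}v_{x_ix_j},
\end{equation*}
so it expresses $v_{x_nx_n}$ in terms of the tangential/mixed second derivatives \emph{and} $v_t$; the ellipticity lower bound on $b_{nn}$ does not remove $v_t$. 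To close the Caccioppoli inequality $\int(|\nabla v|^2+\varepsilon^2)^{\frac{\mu+p-2}{2}}|\nabla^2 v|^2\psi^{k+1}$ you would therefore need the matching weighted time-derivative bound $\int(|\nabla v|^2+\varepsilon^2)^{\frac{\mu-p+2}{2}}|v_t|^2\psi^{k+1}$, which is not produced by testing the $x_l$-differentiated equations for $l\le n-1$, and whose derivation (testing the undifferentiated equation with $v_t(|\nabla v|^2+\varepsilon^2)^{(\mu-p+2)/2}\psi$ and integrating by parts) itself reintroduces $|\nabla^2 v|^2$-terms containing $v_{x_nx_n}$, so the argument threatens to become circular. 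This is precisely the complication the paper sidesteps with the odd reflection: on the extended full ball \emph{all} directions are tangential, so the interior Caccioppoli with $v_{x_l}(|\nabla v|^2+\varepsilon^2)^{\mu/2}\psi^{k+1}$ is admissible for every $l$ and the full Hessian is controlled at once, with no appeal to the equation for the normal--normal derivative. Your route can in principle be completed (this is done in some boundary-regularity literature for the $p$-Laplacian), but it requires a separate weighted estimate on $v_t$ that your proposal does not supply; it is not a bookkeeping issue.
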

\begin{proof}
Without loss of generality, we assume that $x_0=0\in\partial\Omega$ and $t_0=0$.
Now let $\mathcal{T}:\mathbb{R}^n\to\mathbb{R}^n$ be a $C^2$-diffeomorphism defined by
\[y=\mathcal{T}(x)=\mathcal{T}((x',x_n))=(x',x_n-\mathfrak{g}(x'))\]
and also $\mathcal{T}(0)=0$. Set
\begin{equation*}
  x=\mathcal{T}^{-1}(y).
\end{equation*}
Next, we can choose $M$ large enough such that for any $0<R<R_0:=\bar R/M$, there holds 
\begin{equation}\label{subset}
B_{R/2}\cap \Omega\subset	\mathcal{T}^{-1}(B^+_{R})\subset	\mathcal{T}^{-1}(B^+_{3R})\subset B_{4R}\cap \Omega \subset B_{\bar R}\cap \Omega,
\end{equation}
where $B^+_{3R}=B_{3R}\cap \mathbb{R}^n_+$. 
Define \[\widetilde{w}(t,y)=w(t,
\mathcal{T}^{-1}(y))\]
for all $y\in B^+_{3R}$.
After a change of variables on \eqref{eqwbdry}, we see that  $\widetilde{w}$ is a solution to
\begin{equation*}
	\left\{ \begin{array}{ll}
		\widetilde{w}_t -\operatorname{div}_y\left((a(t))_{\Omega_{4 R}}|D \mathcal{T}\nabla_y \widetilde{w}|^{p-2}(D \mathcal{T})^T D \mathcal{T}\nabla_y \widetilde{w}\right)= 0& \text{in}~~(-\lambda^{p-2}(3R)^2,0)\times B^+_{3R}, \\
		\widetilde{w} = 0& \text{on}~~(-\lambda^{p-2}(3R)^2,0)\times\left[B_{3R}\cap\partial\mathbb{R}^n_+\right]. \\
	\end{array} \right.
\end{equation*}
By the definition of $\mathcal{T}$, it is easy to check that
\begin{align*}
	D\mathcal{T}=\left(
	\begin{array}{cc}
		I_{n-1}   & -\nabla \mathfrak{g} \\
		0 & 1
	\end{array}\right),\quad\quad\quad \quad (D\mathcal{T})^{-1}=\left(
	\begin{array}{cc}
		I_{n-1}   & \nabla \mathfrak{g} \\
		0 & 1
	\end{array}\right).
\end{align*}
Hence
\begin{align}\label{DT}
	\operatorname{det} (D \mathcal{T})=\operatorname{det} (D \mathcal{T})^{-1}=1,\quad \text{and}\quad\|D \mathcal{T}-I\|_{L^\infty}+\|(D \mathcal{T})^{-1}-I\|_{L^\infty}\leq CR.\end{align}
Since $\widetilde{w}=0$ on $B_{3R}(0)\cap\partial\mathbb{R}^n_+$, we make an odd extension of $\widetilde{w}$ in $B_{3R}(0)$ and denote it as $\overline{w}$, i.e. for $|y|\leq 3R$, define
\begin{equation*}
	\overline{w}(y)=
	\begin{cases}
		\widetilde{w}(y),\qquad\qquad~ \text{if}\ y_n\geq 0;\\
		-\widetilde{w}(y',-y_n),~~~ \text{if} \ y_n<0.
	\end{cases}
\end{equation*}
Then $\overline{w}$ satisfies
\begin{align}\label{eqbarw}
	\overline{w}_t -\operatorname{div}_y\left((a(t))_{\Omega_{4 R}}\mathbf{A}_0(y,\nabla_y\overline{w})\right)= 0\quad \text{in}\ Q^\lambda_{3R},
\end{align}
where \[\mathbf{A}_0( y,\xi)=|D \mathcal{T} \xi|^{p-2}(D \mathcal{T})^TD \mathcal{T}\xi\quad\quad \text{for}\ \xi\in \mathbb{R}^n.\]

Like the proof of Lemma \ref{lemw}, we will approximate $\overline{w}$ by $	\overline{v}$. For any $\varepsilon>0$, let $	\overline{v}$ be a solution of 
\begin{equation}\label{eqbarv}
\left\{	\begin{array}{ll}
	\overline{v}_t -\operatorname{div}_y\left((a(t))_{\Omega_{4 R}}\mathbf{A}(y,\nabla_y\overline{v})\right)= 0& \text{in}\quad Q^\lambda_{3R},\\
	\overline{v}=\overline{w} &\text{on} \quad\partial_p Q^\lambda_{3R},
	\end{array}\right.
\end{equation}
where \[\mathbf{A}( y,\xi)=(|D \mathcal{T} \xi|^2+\varepsilon^2)^\frac{p-2}{2}(D \mathcal{T})^TD \mathcal{T}\xi.\]
By the classical regularity theory,  $\overline{v}$ is smooth for any $\varepsilon>0$. Moreover, $\overline{v}\to \overline{w}$ in $C^1_{loc}(Q_{3R}^\lambda)$.
$	\mathbf{A}(0,\xi)=(|\xi|^2+\varepsilon^2)^\frac{p-2}{2}\xi$ and $	\partial_\xi\mathbf{A}(0,\xi)=\left(Id+(p-2)\frac{\xi\otimes \xi}{|\xi|^2+\varepsilon^2}\right)(|\xi|^2+\varepsilon^2)^\frac{p-2}{2}$. Moreover, by \eqref{estg} it is easy to check that
\begin{equation*}
	|\partial_\xi\mathbf{A}(y,\xi)-\partial_\xi\mathbf{A}(0,\xi)|\leq c|y| (|\xi|^2+\varepsilon^2)^\frac{p-2}{2}.
\end{equation*}
Hence for any $|y|\leq 3R \leq \frac{p-1}{2c}$, one has
\begin{equation}\label{DA}
	\partial_\xi\mathbf{A}(y,\xi)\geq \frac{p-1}{2}(|\xi|^2+\varepsilon^2)^\frac{p-2}{2}Id.
\end{equation}
By \eqref{estg} we obtain 
\begin{align}\label{dyA}
	|\partial_y \mathbf{A}(y,\xi)|\lesssim (|\xi|^2+\varepsilon^2)^\frac{p-1}{2}.
\end{align}


We now estimate $\overline{v}$. Let $R\leq R_2<R_1\leq 2R$, the cutoff function $\psi$, and parameters $k, k', \mu, q_1$ be as defined in Lemma \ref{lemw}. One has 
\begin{equation*}
	\begin{split}
		&\int_{Q^{\lambda}_{R_1}}
		(|\nabla_y \overline{v}|^2+\varepsilon^2)^{\frac{\mu}{2}}|\nabla_y \overline{v}|^2\psi^{k} \\&\lesssim  \int_{-\lambda^{p-2}R_1^2}^{0} \left(\int _{B_{R_1}} (|\nabla_y \overline{v}|^2+\varepsilon^2)^{\frac{\mu}{2}}|\nabla_y \overline{v}|^2\psi^{k+1}dx\right)^{1-\theta_0}  \left(\int_{B_{R_1}} (|\nabla_y \overline{v}|^2+\varepsilon^2)^{\frac{\mu}{2}}|\nabla_y \overline{v}|^2\psi^{k'} dy\right)^{\theta_0} dt\\
		&\lesssim\sup _{-\lambda^{p-2}R_1^2 \leq t \leq 0} \left(\int_{B_{R_1}}(|\nabla_y\overline{v} |^2+\varepsilon^2)^{\frac{\mu+2}{2}} \psi^{k+1} d y\right)^{1-\theta_0}\int_{-\lambda^{p-2}R_1^2}^{0}\left(\int_{B_{R_1}} (|\nabla_y \overline{v}|^2+\varepsilon^2)^{\frac{\mu}{2}}|\nabla_y\overline{v}|^2\psi^{k'} dy\right)^{\theta_0} dt.
	\end{split}
\end{equation*}
 Using integration by parts and applying H\"older's inequality we obtain
\begin{equation}\nonumber
	\begin{split}
		\int_{B_{R_1}}(|\nabla_y \overline{v}|^2+\varepsilon^2)^{\frac{\mu}{2}}|\nabla_y \overline{v}|^{2} \psi^{k'} d y&=-\int_{B_{R_1}} \overline{v} \operatorname{div}_y\left((|\nabla_y \overline{v}|^2+\varepsilon^2)^{\frac{\mu}{2}} \nabla_y \overline{v} \psi^{k'}\right) d y \\
		&\lesssim \left(\int_{B_{R_1}}|\overline{v}|^{q_1}dy\right)^\frac{1}{q_1}\left(\int_{B_{R_1}}\left((|\nabla_y \overline{v}|^2+\varepsilon^2)^{\frac{\mu}{2}}\left|\nabla_y^{2}\overline{v}\right| \psi^{k'}\right)^{q_1'} d y\right)^\frac{1}{q_1'}\\
		&\quad+\left(\int_{B_{R_1}}|\overline{v}|^{q_1}dy\right)^\frac{1}{q_1} \left(\int_{B_{R_1}}\left((|\nabla_y \overline{v}|^2+\varepsilon^2)^{\frac{\mu+1}{2}}|\nabla_y \psi| \psi^{k'-1}\right)^{q_1'} d y\right)^\frac{1}{q_1'}.
	\end{split}
\end{equation}
where $q_1'$ satisfies $\frac{1}{q_1}+\frac{1}{q_1'}=1$. By H\"older's inequality we also obtain \eqref{term1} and \eqref{term2} with $\nabla v$, $\nabla^2 v$  replaced by $\nabla_y\overline{v}$ and $\nabla_y^2\overline{v}$, respectively. Following the proof of  Lemma \ref{lemw}, we get
\begin{equation}\label{es-dbarv}
	\begin{split}
		&\int_{Q^{\lambda}_{R_1}}(|\nabla_y \overline{v}|^2+\varepsilon^2)^{\frac{\mu}{2}}|\nabla_y \overline{v}|^2\psi^{k} \\
		&\lesssim \left(\int_{-\lambda^{p-2}R_1^2}^{0} \left(\int_{B_{R_1}}|\overline{v}|^{q_1}dy\right)^\frac{\sigma_2}{q_1}dt\right)^{\frac{\theta_0}{\sigma_2}}\left(\int_{Q^{\lambda}_{R_1}}(|\nabla_y \overline{v}|^2+\varepsilon^2)^{\frac{\mu+p-2}{2}}\left|\nabla_y^{2} \overline{v}\right|^{2} \psi^{k+1} \right)^{\frac{\theta_0}{2}}\\ &
		\qquad\qquad\qquad\qquad\times \left(\sup_{ -\lambda^{p-2}R_1^2\leq t\leq 0}\int_{B_{R_1}}(|\nabla_y \overline{v}|^2+\varepsilon^2)^\frac{\mu+2}{2} \psi^{k+1} d y\right)^{\frac{(\mu+2-p)\theta_0}{2(\mu+2)}+1-\theta_0}\\
		&\qquad+\frac{(\lambda^{p-2}R^2)^{\frac{\theta_0}{2}}}{R^{(1-\frac{n}{\mu+2}+\frac{n}{q_1})\theta_0}}\left(\int_{-\lambda^{p-2}R_1^2}^{0} \left(\int_{B_{R_1}}|\overline{v}|^{q_1}dy\right)^\frac{\sigma_2}{q_1}dt\right)^{\frac{\theta_0}{\sigma_2}} \\
		&\qquad\qquad\qquad\qquad\times\left(\sup_{ -\lambda^{p-2}R_1^2\leq t\leq 0}\int_{B_{R_1}}(|\nabla_y \overline{v}|^2+\varepsilon^2)^{\frac{\mu+2}{2}}\psi^{k+1} d y\right)^{\frac{(\mu+1)\theta_0}{\mu+2}+1-\theta_0}.
	\end{split}
\end{equation}
Next, we differentiate \eqref{eqbarv} with respect to $y_l$ to get
\begin{equation}\label{eqDbarv}
	\left(\overline{v}_{y_l}\right)_t-	\operatorname{div}_y\left((a(t))_{\Omega_{4 R}}\partial_\xi\mathbf{A}( y,\xi)|_{\xi=\nabla_y\overline{v}} \nabla_y\overline{v}_{y_l}\right)=\operatorname{div}_y\left((a(t))_{\Omega_{4 R}}\partial_{y_l}\mathbf{A}(y,\xi)\right).
\end{equation}
 Testing \eqref{eqDbarv} by $\overline{v}_{y_l}(|\nabla_y\overline{v}|^2+\varepsilon^2)^\frac{\mu}{2}\psi^{k+1}$, applying \eqref{estg}, \eqref{DA} and \eqref{dyA} we obtain
\begin{align*}
	&\sup _{-\lambda^{p-2}R_1^2 \leq t \leq 0} \int_{B_{R_1}}(|\nabla_y\overline{v}|^2+\varepsilon^2)^\frac{\mu+2}{2} \psi^{k+1} d y+\int_{Q^{\lambda}_{R_1}}(|\nabla_y \overline{v}|^2+\varepsilon^2)^\frac{\mu+p-2}{2}\left|\nabla_y^{2}\overline{v}\right|^{2} \psi^{k+1}  \\
	&\leq \frac{C}{\lambda^{p-2}R^2} \int_{Q^{\lambda}_{R_1}}(|\nabla_y\overline{v}|^2+\varepsilon^2)^\frac{\mu+2}{2} \psi^k +\frac{C}{R^{2}} \int_{Q^{\lambda}_{R_1}}(|\nabla_y\overline{v}|^2+\varepsilon^2)^\frac{\mu+p}{2}  \psi^{k-1} \\
	&\quad\quad+\tilde C\int_{Q^{\lambda}_{R_1}}(|\nabla_y \overline{v}|^2+\varepsilon^2)^\frac{\mu+p-1}{2}\left|\nabla_y^{2}\overline{v}\right| \psi^{k+1}.
\end{align*}
Note that by H\"older's inequality and Young's inequality we have
\begin{align*}
	&\int_{Q^{\lambda}_{R_1}}(|\nabla_y \overline{v}|^2+\varepsilon^2)^\frac{\mu+p-1}{2}\left|\nabla_y^{2}\overline{v}\right| \psi^{k+1} \\
	&\quad\quad\leq \frac{1}{10\tilde C}\int_{Q^{\lambda}_{R_1}}(|\nabla_y \overline{v}|^2+\varepsilon^2)^\frac{\mu+p-2}{2}\left|\nabla_y^{2}\overline{v}\right|^{2} \psi^{k+1} +C \int_{Q^{\lambda}_{R_1}}(|\nabla_y\overline{v}|^2+\varepsilon^2)^\frac{\mu+p}{2}  \psi^{k-1}  \\
	&\quad\quad\leq \frac{1}{10\tilde C}\int_{Q^{\lambda}_{R_1}}(|\nabla_y \overline{v}|^2+\varepsilon^2)^\frac{\mu+p-2}{2}\left|\nabla_y^{2}\overline{v}\right|^{2} \psi^{k+1} +\frac{C}{R^{2}} \int_{Q^{\lambda}_{R_1}}(|\nabla_y\overline{v}|^2+\varepsilon^2)^\frac{\mu+p}{2}  \psi^{k-1}  ,
\end{align*}
where we also used the  fact that $R\leq 1$.
Hence
\begin{align*}
	&\sup _{-\lambda^{p-2}R_1^2 \leq t \leq 0} \int_{B_{R_1}}(|\nabla_y\overline{v}|^2+\varepsilon^2)^\frac{\mu+2}{2} \psi^{k+1} d y+\int_{Q^{\lambda}_{R_1}}(|\nabla_y \overline{v}|^2+\varepsilon^2)^\frac{\mu+p-2}{2}\left|\nabla_y^{2}\overline{v}\right|^{2} \psi^{k+1}  \\
	\leq &\,\frac{C}{\lambda^{p-2}R^2} \int_{Q^{\lambda}_{R_1}}(|\nabla_y\overline{v}|^2+\varepsilon^2)^\frac{\mu+2}{2} \psi^k +\frac{C}{R^{2}} \int_{Q^{\lambda}_{R_1}}(|\nabla_y\overline{v}|^2+\varepsilon^2)^\frac{\mu+p}{2}  \psi^{k-1} .
\end{align*}
Combining this with \eqref{es-dbarv}, we obtain 
	\begin{align*}
	&\int_{Q^{\lambda}_{R_1}}(|\nabla_y v|^2+\varepsilon^2)^{\frac{\mu}{2}}|\nabla_y v|^2\psi^{k}\\& \lesssim \left(\int_{-\lambda^{p-2}R_1^2}^{0} \left(\int_{B_{R_1}}|\overline{v}|^{q_1}dy\right)^\frac{\sigma_2}{q_1}dt\right)^{\frac{\theta_0}{\sigma_2}} \\&\quad\quad\times\left\{\left(\frac{1}{\lambda^{p-2}R^2}\int_{Q^{\lambda}_{R_1}}(|\nabla_y \overline{v}|^2+\varepsilon^2)^{\frac{\mu+2}{2}} \psi^{k}+\frac{1}{R^2}\int_{Q^{\lambda}_{R_1}}(|\nabla_y \overline{v}|^2+\varepsilon^2)^{\frac{\mu+p}{2}} \psi^{k}\right)^{1-\frac{p\theta_0}{2(\mu+2)}}\right.\\
	&\quad\quad\quad+\left.\frac{(\lambda^{p-2}R^2)^{\frac{\theta_0}{2}}}{R^{(1-\frac{n}{\mu+2}+\frac{n}{q_1})\theta_0}}\left(\frac{1}{\lambda^{p-2}R^2}\int_{Q^{\lambda}_{R_1}}(|\nabla_y \overline{v}|^2+\varepsilon^2)^{\frac{\mu+2}{2}} \psi^{k}+\frac{1}{R^2}\int_{Q^{\lambda}_{R_1}}(|\nabla_y \overline{v}|^2+\varepsilon^2)^{\frac{\mu+p}{2}} \psi^{k}\right)^{1-\frac{\theta_0}{\mu+2}}\right\}.
\end{align*}
Following the proof in Lemma \ref{lemw}, let $\varepsilon\rightarrow 0$, then $\overline{v}\to \overline{w}$ in $C^1(Q_{3R/2}^\lambda)$. 
 By Young's inequality  
we obtain 
\begin{align}\nonumber
	\fint_{Q^\lambda_{R_2}}|\lambda\nabla_y \overline{w}|^{\mu+2}
	\lesssim
	\tilde{A}_1
	^\frac{2(\mu+2)}{ p}+\tilde{A}_1+\fint_{Q^\lambda_{R_1}}|\lambda\nabla_y \overline{w}|^{\mu+p},
\end{align}
where
\begin{align*}
	\tilde{A}_1=\left[\fint_{-\lambda^{p-2}(2R)^2}^{0} \left(\fint_{B_{2R}}\left(\frac{\lambda}{R}|\overline{w}|\right)^{q_0}dy\right)^\frac{\sigma_2}{q_0}dt\right]^{\frac{1}{\sigma_2}}.
\end{align*}
We mimic the iteration procedure in Lemma \ref{lemw} to get that for  $\beta_1>\frac{n(2-p)}{2}$,
\begin{align}\label{eq-wbar2}
	\fint_{Q^\lambda_{R}}|\lambda\nabla_y \overline{w}|^{\beta_{1}}
	\lesssim
	\tilde{A}_1
	^\frac{2\beta_1}{ p}+\tilde{A}_1+G_{\frac{2p-2}{p}}\left(\fint_{Q^\lambda_{2R}}|\lambda\nabla_y\overline{w} |^p\right).
\end{align}
Moreover, since $\overline{w}$ is odd with respect to $y_n$ in $B_{2R}$, \eqref{eq-wbar2} also holds in the half parabolic cylinder, i.e.,
\begin{align}\label{eq-wbar3}
	\fint_{Q^{\lambda+}_{R}}|\lambda\nabla_y \overline{w}|^{\beta_{1}}
	\lesssim
	\tilde{A}_2
	^\frac{2\beta_1}{ p}+\tilde{A}_2+G_{\frac{2p-2}{p}}\left(\fint_{Q^{\lambda+}_{2R}}|\lambda\nabla_y\overline{w} |^p\right),
\end{align}
where we denote $Q^{\lambda+}_{R}= B_{R}^+\times(-\lambda^{p-2}R^2,0)$ and
\begin{equation*}
	\tilde{A}_2=\left[\fint_{-\lambda^{p-2}
		(2R)^2}^0\left(\fint_{B_{2 R}^+}
	\left(\frac{\lambda}{R}|\overline{w}|\right)^{q_0}dy\right)^\frac{\sigma_2}{q_0}dt\right]
	^\frac{1}{\sigma_2}.
\end{equation*}
For any $x\in B_{R/2}\cap \Omega$, we have $w(x,t)=\overline{w}(\mathcal{T}x,t)$ and $\nabla w(x,t)=D \mathcal{T}\nabla_y \overline{w}(\mathcal{T}x,t)$. By \eqref{subset}, \eqref{DT} and \eqref{eq-wbar3} we obtain \eqref{es-dwbebdry}.
\end{proof}

\begin{lemma}\label{lemBdry2}
  Let $w$ be a weak solution to equation \eqref{eqwbdry}. There exists $R_0>0$ small depending on $\partial \Omega$ such that for any $(x_0,t_0)\in \partial\Omega\times[0,T]$, any $0<R< R_0$,
and any $\beta_1>\frac{n(2-p)}{2}$, we have
\begin{equation*}
	\lambda\|\nabla w\|_{L^\infty({Q}^\lambda_{R/2}(x_0,t_0))}\leq C\left(\fint_{{Q}^\lambda_{4R}(x_0,t_0)}|\lambda\nabla w|^{\beta_{1}}\right)^{\frac{\nu_0}{\beta_{1}}}+C\left(\fint_{{Q}^\lambda_{4R}(x_0,t_0)}|\lambda\nabla w|^{\beta_{1}}\right)^{\frac{\nu_1}{\beta_{1}}}
\end{equation*}
for some $\nu_0, \nu_1$ depending on $\beta_1, p, n$.
\end{lemma}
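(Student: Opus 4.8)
The plan is to mirror the proof of Lemma \ref{z} essentially line by line, but carried out in the flattened, odd-reflected coordinates already set up in the proof of Lemma \ref{lemBdry1}. So I take $(x_0,t_0)=(0,0)$, keep $R_0$ so small that \eqref{DA} holds on $B_{3R}$ and $R\le1$, let $\overline v$ be the smooth solution of \eqref{eqbarv} (so $\overline v\to\overline w$ in $C^1_{loc}(Q^\lambda_{3R})$), and recall that $\overline w$ solves \eqref{eqbarw} on $Q^\lambda_{3R}$ and is odd in $y_n$.

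First I would establish a weighted Caccioppoli inequality for $\overline v$. Fix $R\le R_2<R_1\le2R$ and a standard cutoff $\psi$ as in Lemma \ref{lemw}, adapted to the radii $R_2<R_1$. Testing the differentiated equation \eqref{eqDbarv} with $\overline v_{y_l}(|\nabla_y\overline v|^2+\varepsilon^2)^{\mu/2}\psi^2$, summing in $l$ and integrating in time, the principal part is bounded below by $\int|\nabla_y((|\nabla_y\overline v|^2+\varepsilon^2)^{(\mu+p)/4}\psi)|^2$ thanks to \eqref{DA}, while the new term $\operatorname{div}_y((a(t))_{\Omega_{4R}}\partial_{y_l}\mathbf A)$ produces $\int_{Q^\lambda_{R_1}}(|\nabla_y\overline v|^2+\varepsilon^2)^{(\mu+p-1)/2}|\nabla_y^2\overline v|\psi^2$, which — exactly as in the corresponding step of Lemma \ref{lemBdry1} — is absorbed into the Hessian term using \eqref{dyA}, Young's inequality and $R\le1$. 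This gives, for every $\mu>p-2$,
\begin{align*}
&\sup_{-\lambda^{p-2}R_1^2\le t\le0}\int_{B_{R_1}}(|\nabla_y\overline v|^2+\varepsilon^2)^{\frac{\mu+2}{2}}\psi^2\,dy+\int_{Q^\lambda_{R_1}}\Bigl|\nabla_y\bigl((|\nabla_y\overline v|^2+\varepsilon^2)^{\frac{\mu+p}{4}}\psi\bigr)\Bigr|^2\\
&\qquad\le\frac{C}{\lambda^{p-2}R^2}\int_{Q^\lambda_{R_1}}(|\nabla_y\overline v|^2+\varepsilon^2)^{\frac{\mu+2}{2}}\psi+\frac{C}{R^2}\int_{Q^\lambda_{R_1}}(|\nabla_y\overline v|^2+\varepsilon^2)^{\frac{\mu+p}{2}},
\end{align*}
which is formally identical to the estimate used in Lemma \ref{z}.

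Next I would run Choe's iteration. From this Caccioppoli estimate, combining the energy bound with the parabolic Sobolev inequality and interpolating the time integrability, the argument of \cite[Theorem 4]{Choe1} (invoked in Lemma \ref{z}) gives, with $\theta=\frac{n+2}{n}$,
\begin{align*}
\fint_{Q^\lambda_{R_2}}(|\nabla_y\overline v|^2+\varepsilon^2)^{\frac{\theta\mu+p+\frac4n}{2}}\le\frac{C\lambda^{\frac{2(p-2)}{n}}}{(1-R_2/R_1)^{2\theta}}\Bigl(\lambda^{2-p}\fint_{Q^\lambda_{R_1}}(|\nabla_y\overline v|^2+\varepsilon^2)^{\frac{\mu+2}{2}}+\fint_{Q^\lambda_{R_1}}(|\nabla_y\overline v|^2+\varepsilon^2)^{\frac{\mu+p}{2}}\Bigr)^\theta
\end{align*}
for all $R\le R_2<R_1\le2R$. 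Letting $\varepsilon\to0$, passing to $\overline w$, setting $\widetilde V=\lambda|\nabla_y\overline w|$ and rescaling yields $\fint_{Q^\lambda_{R_2}}\widetilde V^{\theta\mu+p+\frac4n}\le C(1-R_2/R_1)^{-2\theta}\bigl(\fint_{Q^\lambda_{R_1}}\widetilde V^{\mu+2}+\fint_{Q^\lambda_{R_1}}\widetilde V^{\mu+p}\bigr)^\theta$ for all $\mu>p-2$. Then, with $r_k=(1+2^{-k+1})R$, $\beta_{k+1}=\theta\beta_k+p-2$ and $\beta_1>\frac{n(2-p)}{2}$ (so $\beta_k=\frac{n(2-p)}{2}+\theta^k(\beta_1-\frac{n(2-p)}{2})\to\infty$), iterating with $\mu=\beta_k-2$, $R_1=r_k$, $R_2=r_{k+1}$ and splitting into the cases $B(1)\le1$ and $B(1)>1$ verbatim as in Lemma \ref{z} produces
$$\lambda\|\nabla_y\overline w\|_{L^\infty(Q^\lambda_R)}\le C\Bigl(\fint_{Q^\lambda_{2R}}|\lambda\nabla_y\overline w|^{\beta_1}\Bigr)^{\frac{\nu_0}{\beta_1}}+C\Bigl(\fint_{Q^\lambda_{2R}}|\lambda\nabla_y\overline w|^{\beta_1}\Bigr)^{\frac{\nu_1}{\beta_1}}$$
with the same $\nu_0\le\nu_1$ as in Lemma \ref{z}. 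Finally, using $\nabla w(x,t)=D\mathcal T\,\nabla_y\overline w(\mathcal Tx,t)$ on $B_{R/2}\cap\Omega$, the bounds $\|D\mathcal T\|_{L^\infty}\le C$ and $\det D\mathcal T\equiv1$ from \eqref{DT}, the oddness of $\overline w$ in $y_n$ and the inclusions \eqref{subset}, the left side becomes $\lambda\|\nabla w\|_{L^\infty(Q^\lambda_{R/2}(x_0,t_0))}$ and the right side an average of $|\lambda\nabla w|^{\beta_1}$ over $Q^\lambda_{4R}(x_0,t_0)$, which is the claim.

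I expect the only genuine obstacle, relative to the interior Lemma \ref{z}, to be the extra source term $\operatorname{div}_y((a(t))_{\Omega_{4R}}\partial_{y_l}\mathbf A(y,\xi))$ in \eqref{eqDbarv}, which arises from the $y$-dependence of the flattened operator: one must keep the weighted Caccioppoli inequality in exactly the shape needed to feed Choe's iteration while controlling this term, which works only because $|\partial_y\mathbf A|\lesssim(|\xi|^2+\varepsilon^2)^{(p-1)/2}$ by \eqref{dyA} and the operator is uniformly elliptic by \eqref{DA} — both requiring $R_0$ small (so that $3R\le\frac{p-1}{2c}$ and $R\le1$). Once that estimate is secured in this form, Choe's reverse-Sobolev argument and the Moser iteration go through exactly as in the interior case, and the pullback through $\mathcal T$ is routine bookkeeping.
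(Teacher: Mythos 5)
Your proposal reproduces the paper's own proof essentially verbatim: flatten and odd-reflect as in Lemma \ref{lemBdry1}, regularize via \eqref{eqbarv}, test the differentiated equation \eqref{eqDbarv} with $\overline v_{y_l}(|\nabla_y\overline v|^2+\varepsilon^2)^{\mu/2}\psi^2$, absorb the extra $\partial_y\mathbf A$ source term using \eqref{dyA}, Young's inequality and $R\le 1$, then invoke Choe's reverse-H\"older step and the Moser-type iteration of Lemma \ref{z}, and finally pull back through $\mathcal T$ using \eqref{subset} and \eqref{DT}. This matches the paper's argument step for step, including the identification of the $\partial_y\mathbf A$ term as the only new difficulty relative to the interior case.
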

\begin{proof}
	For simplicity, we assume that $(x_0,t_0)=(0,0)$. Following Lemma \ref{lemBdry1}, after transformation and extension, we get $\overline{w}$ satisfying \eqref{eqbarw}. Let $\overline{v}$ be a solution to the approximate system  \eqref{eqbarv}. We have $\overline{v}\to \overline{w}$ in $C^1_{loc}(Q_{3R}^\lambda)$ as $\varepsilon\rightarrow 0$. Let $R\leq R_2<R_1\leq 2R$ and let $\psi$ be the cut-off function defined in Lemma \ref{lemw}.  Take $\overline{v}_{y_l}(|\nabla_y \overline{v}|^2+\varepsilon^2)^{\frac\mu{2}} \psi^{2}$ as a test function to \eqref{eqDbarv} and integrate in time we obtain
	\begin{align*}
		&\sup _{-\lambda^{p-2}R_1^2 \leq t \leq 0} \int_{B_{R_1}}(|\nabla_y \overline{v}|^2+\varepsilon^2)^{\frac{\mu+2}{2}} \psi^{2} d y+\int_{Q^{\lambda}_{R_1}}\left|\nabla_y \left((|\nabla_y \overline{v}|^2+\varepsilon^2)^{\frac{\mu+p}{4}}\psi\right)\right|^2  \\
		&\quad\quad\leq \frac{C}{\lambda^{p-2}R^2} \int_{Q^{\lambda}_{R_1}}(|\nabla_y \overline{v}|^2+\varepsilon^2)^{\frac{\mu+2}{2}} \psi +\frac{C}{R^{2}} \int_{Q^{\lambda}_{R_1}}(|\nabla_y \overline{v}|^2+\varepsilon^2)^{\frac{\mu+p}{2}}\\&\qquad\qquad+\tilde C\int_{Q^{\lambda}_{R_1}}(|\nabla_y \overline{v}|^2+\varepsilon^2)^\frac{\mu+p-1}{2}\left|\nabla_y^{2}\overline{v}\right| \psi^{2} .
	\end{align*}
	Note that by H\"older's inequality and Young's inequality,
	\begin{align*}
		&\int_{Q^{\lambda}_{R_1}}(|\nabla_y \overline{v}|^2+\varepsilon^2)^\frac{\mu+p-1}{2}\left|\nabla_y^{2}\overline{v}\right| \psi^{2}\\
		&\quad\quad\leq \frac{1}{10\tilde C} \int_{Q^{\lambda}_{R_1}}\left|\nabla_y\left((|\nabla_y\overline{v}|^2+\varepsilon^2)^{\frac{\mu+p}{4}}\psi\right)\right|^2+\frac{C}{R^{2}} \int_{Q^{\lambda}_{R_1}}(|\nabla_y \overline{v}|^2+\varepsilon^2)^\frac{\mu+p}{2}.
	\end{align*}
	Hence
	\begin{align*}
		&\sup _{-\lambda^{p-2}R_1^2 \leq t \leq 0} \int_{B_{R_1}}(|\nabla_y \overline{v}|^2+\varepsilon^2)^{\frac{\mu+2}{2}} \psi^{2} d y+\int_{Q^{\lambda}_{R_1}}\left|\nabla_y \left((|\nabla_y \overline{v}|^2+\varepsilon^2)^{\frac{\mu+p}{4}}\psi\right)\right|^2  \\
		&\quad\quad\leq \frac{C}{\lambda^{p-2}R^2} \int_{Q^{\lambda}_{R_1}}(|\nabla_y \overline{v}|^2+\varepsilon^2)^{\frac{\mu+2}{2}} \psi +\frac{C}{R^{2}} \int_{Q^{\lambda}_{R_1}}(|\nabla_y \overline{v}|^2+\varepsilon^2)^{\frac{\mu+p}{2}},
	\end{align*}
	which yields
	\begin{align*}
		\fint_{Q_{R_2}^\lambda} (|\nabla_y \overline{v}|^2+\varepsilon^2)^\frac{\theta \mu + p+\frac{4}{n}}{2}
		\leq  \frac{C\lambda^{\frac{2(p-2)}{n}}}{(1-R_2/R_1)^{2\theta}}  \left( \lambda^{2-p} \fint_{Q_{R_1}^\lambda} (|\nabla_y \overline{v}|^2+\varepsilon^2)^\frac{\mu+2}{2}    +  \fint_{Q_{R_1}^\lambda}  (|\nabla_y \overline{v}|^2+\varepsilon^2)^\frac{\mu+p}{2}    \right)^\theta
	\end{align*}
	for any $R_2<R_1<8R_2$. Letting $\varepsilon\to 0$ gives
	\begin{align*}
		\fint_{Q_{R_2}^\lambda} |\nabla_y \overline{w}|^{\theta \mu + p+\frac{4}{n}}
		\leq  \frac{C\lambda^{\frac{2(p-2)}{n}}}{(1-R_2/R_1)^{2\theta}}  \left( \lambda^{2-p} \fint_{Q_{R_1}^\lambda} |\nabla_y \overline{w}|^{\mu+2}   +  \fint_{Q_{R_1}^\lambda}  |\nabla_y \overline{w}|^{\mu+p}    \right)^\theta.
	\end{align*}
	 Following the proof of Lemma \ref{z}, one gets
	\begin{equation}\label{Linftywbdry}
	\lambda\|\nabla_y \overline{w}\|_{L^\infty(Q_{R}^\lambda)}\leq C\left(\fint_{Q_{2R}^\lambda}|\lambda\nabla_y \overline{w}|^{\beta_1}\right)^{\frac{\nu_0}{\beta_{1}}}+C\left(\fint_{Q_{2R}^\lambda}|\lambda\nabla_y \overline{w}|^{\beta_1}\right)^{\frac{\nu_1}{\beta_{1}}}.
\end{equation}
	For any $x\in B_{R/2}(0)\cap \Omega$, one has $w(x,t)=\overline{w}(\mathcal{T}x,t)$ and $\nabla w(x,t)=D \mathcal{T}\nabla_y \overline{w}(\mathcal{T}x,t)$. By
	\eqref{subset},  \eqref{DT} and \eqref{Linftywbdry} we obtain
	\begin{equation*}
		\lambda\|\nabla {w}\|_{L^\infty(Q_{R/2}^\lambda)}\leq C\left(\fint_{Q_{4R}^\lambda}|\lambda\nabla {w}|^{\beta_1}\right)^{\frac{\nu_0}{\beta_{1}}}+C\left(\fint_{Q_{4R}^\lambda}|\lambda\nabla {w}|^{\beta_1}\right)^{\frac{\nu_1}{\beta_{1}}}.
	\end{equation*}
Then we complete the proof.
\end{proof}\vspace{0.5cm}\\
We also have the following counterpart of comparison estimates near the boundary.
\begin{lemma}\label{u-wbdry}
	Let $w$ be a weak solution of \eqref{eqwbdry}. For any $\vartheta>\frac{{2n}}{p}-(2+n)$ and any $\eta, \kappa_0\in(0,1)$, let 
		\begin{align*}
		\sigma = p+\vartheta +\frac{p(\vartheta +2)}{n}>\frac{n(2-p)}{p},\ \sigma_1 = p+\frac{n\vartheta }{n+\vartheta +2}>\frac{n(2-p)}{2}.
	\end{align*}
	If $[a]_{4R}\leq \eta^{\frac{n^4}{\kappa_0^4(p-1)^4}}$, $\nabla u\in L^{\sigma_1+\kappa_0}(Q_{4R}^\lambda(x_0,t_0))$, and  $F\in L^{\sigma_1}(Q_{4R}^\lambda(x_0,t_0))$, then\\
(1)	\begin{equation}\label{u-wLpbdry}
	\begin{split}
		& \left(\fint_{{Q}_{4R}^\lambda(x_0,t_0)} |\lambda \nabla(u-w)|^p\right)^{\frac{1}{p}}\leq C(\eta)\left(\fint_{Q_{4R}^\lambda(x_0,t_0)}  |F|^{p}\right)^\frac{1}{p}+\eta\left(\fint_{Q_{4R}^\lambda(x_0,t_0)}|\nabla u|^{\sigma_1}\right)^\frac{1}{\sigma_1}.
	\end{split}
\end{equation}
	(2)\begin{equation}\label{u-wbdrycase1}
		\begin{split}
			& \left[\fint_{t_0-\lambda^{p-2}
				(4R)^2}^{t_0}\left(\fint_{B_{4R}(x_0)\cap \Omega}
			\left(\frac{\lambda}{R}|u-w|\right)^{\sigma}dx\right)^\frac{2}{\sigma}dt\right]
			^\frac{1}{2}\\
			&\quad\leq
			C(\eta,\kappa_0)\left(\fint_{Q_{4R}^\lambda(x_0,t_0)}|\lambda F|^{\sigma_1}\right)^{\frac{n+p}{\sigma_1(n+2)}}+
			\eta\left(\fint_{Q_{4R}^\lambda(x_0,t_0)}|\lambda \nabla u|^{\sigma_1+\kappa_0}\right)^{\frac{n+p}{(\sigma_1+\kappa_0)(n+2)}},
		\end{split}
	\end{equation}
\end{lemma}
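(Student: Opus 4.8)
The plan is to mirror the proof of Lemma~\ref{u-w} almost verbatim, the point being that the comparison function $\mathbf{f}=u-w$ belongs to $W_0^{1,p}(\Omega_{4R}(x_0))$ for a.e.\ $t$ — this is built into the definition of $w$ via the boundary condition $w=u$ on $\partial_p Q_{4R}^\lambda(x_0,t_0)$ — so that, unlike in the regularity estimates of Lemmas~\ref{lemBdry1}--\ref{lemBdry2}, \emph{no flattening of $\partial\Omega$ is required}: we never differentiate the equation for $\mathbf{f}$ and only use $\mathbf{f}$ and $|\mathbf{f}|^{\vartheta}\mathbf{f}$ as test functions, together with the Sobolev embedding $W_0^{1,p}\hookrightarrow L^{np/(n-p)}$. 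Assuming $(x_0,t_0)=(0,0)$ and setting $\mathbf{f}=u-w$, one has
\begin{equation*}
  \mathbf{f}_t-\operatorname{div}\big((a(t))_{\Omega_{4R}}(|\nabla u|^{p-2}\nabla u-|\nabla w|^{p-2}\nabla w)\big)=\operatorname{div}\mathcal{G},\quad \mathcal{G}=|F|^{p-2}F+(a(t)-(a(t))_{\Omega_{4R}})|\nabla u|^{p-2}\nabla u,
\end{equation*}
with $\mathbf{f}=0$ on $\partial_p Q_{4R}^\lambda$. Since $\partial\Omega\in C^2$ and $R\le R_0$ one has the measure density bound $|\Omega_{4R}(x_0)|\gtrsim R^n$, whence
\begin{equation*}
  \fint_{\Omega_{4R}}|a(x,t)-(a(t))_{\Omega_{4R}}|\,dx\le 2\,\frac{|B_{4R}|}{|\Omega_{4R}|}\fint_{B_{4R}}|a(x,t)-(a(t))_{B_{4R}}|\,dx\lesssim [a]_{4R},
\end{equation*}
and the extension of $\mathbf{f}$ by $0$ lies in $W_0^{1,p}(B_{4R})$.

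For part~(1), I would use $\mathbf{f}$ itself as a test function: the lateral boundary term vanishes since $\mathbf{f}\in W_0^{1,p}(\Omega_{4R})$, and the initial term drops since $\mathbf{f}=0$ at time $-\lambda^{p-2}(4R)^2$. Setting $g=|\nabla\mathbf{f}|^2(|\nabla u|+|\nabla w|)^{p-2}$, monotonicity gives $\fint g\lesssim\fint|\mathcal{G}||\nabla\mathbf{f}|$; then Hölder's inequality, the bound \eqref{G} on $\mathcal{G}$ combined with the oscillation estimate for $a$ recorded above, the pointwise inequality $|\nabla\mathbf{f}|^p\le g+g^{p/2}\min\{|\nabla u|,|\nabla w|\}^{p(1-p/2)}$, and Young's inequality yield \eqref{u-wLpbdry} exactly as \eqref{lem1-case3} was obtained.

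For part~(2), I would use $|\mathbf{f}|^{\vartheta}\mathbf{f}$ as a test function (justified by the same approximation as in Lemma~\ref{u-w}), reaching the analogue of \eqref{esu-w} and hence $E_3+E_4\lesssim E_1+E_2$, with $E_1,\dots,E_4$ now defined over $Q_{4R}^\lambda\cap(\Omega\times\mathbb{R})$ and $E_3=\sup_t\int_{\Omega_{4R}}|\mathbf{f}|^{\vartheta+2}$. The terms $E_1,E_2$ are controlled by the identical chains of Hölder inequalities, using \eqref{coa} (here with radius $4R$) to absorb the oscillation of $a$ into $\eta^2$; the only place where the geometry enters is the Sobolev step
\begin{equation*}
  \Big(\int_{\Omega_{4R}}|\mathbf{f}|^{\frac{n(p+\vartheta)}{n-p}}\Big)^{\frac{n-p}{n}}\lesssim\int_{B_{4R}}\big|\nabla\big(|\mathbf{f}|^{\vartheta/p}\mathbf{f}\big)\big|^{p}=c\int_{\Omega_{4R}}|\mathbf{f}|^{\vartheta}|\nabla\mathbf{f}|^{p},
\end{equation*}
which holds precisely because $|\mathbf{f}|^{\vartheta/p}\mathbf{f}$ extended by $0$ lies in $W_0^{1,p}(B_{4R})$. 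Combining with the interpolation inequality and $E_3$ as above gives the $L^{\sigma}$ estimate for $\mathbf{f}$, and since $1<p\le\frac{2n}{n+2}$ forces $\sigma\ge2$, Hölder's inequality in time converts it into \eqref{u-wbdrycase1}, just as \eqref{lem1-case1} followed. The main point to watch — the only genuine departure from the interior argument — is the two structural facts above (that the zero extension of $\mathbf{f}$ is in $W_0^{1,p}(B_{4R})$, and that $|\Omega_{4R}(x_0)|\gtrsim R^n$), both standard consequences of $\partial\Omega\in C^2$ and the smallness of $R$; once they are in hand the computation is line-for-line that of Lemma~\ref{u-w}.
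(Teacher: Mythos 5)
Your proposal is correct and matches what the paper leaves to the reader: Lemma~\ref{u-wbdry} is stated without proof precisely because it follows from the argument of Lemma~\ref{u-w} with only routine modifications, and you have identified those modifications accurately. The two structural facts you isolate are exactly what is needed: (i) $\mathbf{f}=u-w$ has zero trace on all of $\partial\Omega_{4R}(x_0)$ (both on $\partial B_{4R}\cap\Omega$ and on $\partial\Omega\cap B_{4R}$, by $w=u$ on the lateral boundary of $Q^\lambda_{4R}$ together with $u=0$ on $\partial\Omega$), so its zero extension is in $W_0^{1,p}(B_{4R})$ and the Sobolev inequality applies with a dimensional constant and no boundary terms; and (ii) the measure density $|\Omega_{4R}(x_0)|\gtrsim R^n$ (valid for $C^2$ boundary and $R\le R_0$) lets you pass from the mean $(a(t))_{\Omega_{4R}}$ to $(a(t))_{B_{4R}}$ via the standard BMO trick, with a fixed multiplicative loss that is harmlessly absorbed into $\eta^2$ because the oscillation of $a$ appears with the small power $\frac{\kappa_0(p-1)}{\sigma_1(\sigma_1+\kappa_0)}$ and the hypothesis $[a]_{4R}\le\eta^{n^4/(\kappa_0^4(p-1)^4)}$ provides ample room. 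Since you never differentiate the equation, no flattening of $\partial\Omega$ is needed, which is the right explanation for why this lemma is much easier than Lemmas~\ref{lemBdry1}--\ref{lemBdry2}. With these two observations in place the computations of Lemma~\ref{u-w} carry over line by line; nothing further is required.
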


In the following, we give a proposition as a boundary version of Proposition \ref{prop1}.
\begin{proposition}\label{prop2}
	Let $\Omega$ be a bounded domain in $\mathbb{R}^n$ with $C^2$ boundary, $u$ be a weak solution of \eqref{e1} in $\Omega_T$, and $w$ be a weak solution to \eqref{eqwbdry}. There exists $R_0>0$ small depending on $\partial \Omega$ such that for any $(x_0,t_0)\in \partial\Omega\times [0,T]$ and any $0<R< R_0$,
	there exists  $\tilde{\mathbf{h}}\in L^\infty({Q}_{R/16}^\lambda(x_0,t_0))$ and  $\frac{n(2-p)}{2}<\sigma_1<n+p$ such that for any $\eta,\kappa_0\in(0,1)$
	\begin{align}
		\lambda\|\tilde{\mathbf{h}}\|_{L^\infty({Q}_{R/16}^\lambda(x_0,t_0))}\leq&
		C G_{\gamma_0}\left(\left(\fint_{{Q}_{4R}^\lambda(x_0,t_0)}|\lambda\nabla u|^{\sigma_1} \right)^{\frac{1}{\sigma_1}}\right)+ C(\eta,\kappa_0)G_{\gamma_0}\left(\left(\fint_{{Q}_{4R}^\lambda(x_0,t_0)}|\lambda F|^{\sigma_1} \right)^{\frac{1}{\sigma_1}}\right)\nonumber\\
		&\quad\quad	+\eta G_{\gamma_0}\left(\left(\fint_{{Q}_{4R}^\lambda(x_0,t_0)}|\lambda\nabla u|^{\sigma_1+\kappa_0} \right)^{\frac{1}{\sigma_1+\kappa_0}}\right),\label{hbdry1}
	\end{align}
	and
	\begin{align}
		\left(\fint_{{Q}_{R/16}^\lambda(x_0,t_0)}|\lambda(\nabla u-\tilde{\mathbf{h}})|^{\sigma_1} \right)^{\frac{1}{\sigma_1}}&\leq \eta G_{\gamma_0}\left(\left(\fint_{{Q}_{4R}^\lambda(x_0,t_0)}|\lambda\nabla u|^{\sigma_1+\kappa_0} \right)^{\frac{1}{\sigma_1+\kappa_0}}\right)\nonumber\\&\qquad+ C(\eta,\kappa_0)G_{\gamma_0}\left(\left(\fint_{{Q}_{4R}^\lambda(x_0,t_0)}|\lambda F|^{\sigma_1} \right)^{\frac{1}{\sigma_1}}\right),\label{hbdry2}
	\end{align}
	with $G_{\gamma_0}$ defined in \eqref{formularG}
	for some $\gamma_0\in (0,1)$, provided $[a]_{4R}\leq \eta^{\frac{n^4}{\kappa_0^4(p-1)^4}} $, $\nabla u\in L^{\sigma_1+\kappa_0}(Q^\lambda_{4R}(x_0,t_0))$ and $F\in L^{\sigma_1}(Q^\lambda_{4R}(x_0,t_0))$.
\end{proposition}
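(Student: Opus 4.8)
The plan is to follow the proof of Proposition~\ref{prop1} line by line at the structural level, replacing each interior ingredient by its boundary counterpart: Lemma~\ref{lemBdry1} in place of Lemma~\ref{lemw}, Lemma~\ref{lemBdry2} in place of Lemma~\ref{z}, and Lemma~\ref{u-wbdry} in place of Lemma~\ref{u-w}. First I would set $\tilde{\mathbf{h}}=\nabla w$, with $w$ the weak solution of \eqref{eqwbdry}; the boundary regularity packaged in Lemma~\ref{lemBdry2} guarantees $\nabla w\in L^\infty$ of the relevant half-cylinder, so $\tilde{\mathbf{h}}$ is well defined. I would then fix $\beta_1>\frac{n(2-p)}{2}$, to be taken close to $\frac{n(2-p)}{2}$, put $q_0=\frac{2\beta_1}{p}>\frac{n(2-p)}{p}$, and choose $\vartheta>\frac{2n}{p}-(2+n)$ so that the exponent $\sigma_1$ of Lemma~\ref{u-wbdry} equals $\frac{q_0(n+p)}{q_0+n+2}$, which lies in $\big(\frac{n(2-p)}{2},\,n+p\big)$; a short computation then gives $\sigma=q_0$, and one puts $\sigma_2=\min\{\sigma_1,2\}$.

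Applying Lemma~\ref{lemBdry2} on the cylinder of radius $R/8$ and then Lemma~\ref{lemBdry1} on the cylinder of radius $R$ yields
\begin{equation*}
\lambda\|\tilde{\mathbf{h}}\|_{L^\infty(Q_{R/16}^\lambda(x_0,t_0))}\lesssim A_1^{\frac{\nu_0}{\beta_1}}+A_1^{\frac{2\nu_1}{p}}+\Big(G_{\frac{2p-2}{p}}\big(\fint_{Q_{4R}^\lambda}|\lambda\nabla w|^{p}\big)\Big)^{\frac{\nu_0}{\beta_1}}+\Big(G_{\frac{2p-2}{p}}\big(\fint_{Q_{4R}^\lambda}|\lambda\nabla w|^{p}\big)\Big)^{\frac{\nu_1}{\beta_1}},
\end{equation*}
where $A_1$ is the quantity \eqref{def-A1} and $\nu_0\le\nu_1$ are the exponents of Lemma~\ref{z}. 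The main remaining task is to bound $A_1$ and $\fint_{Q_{4R}^\lambda}|\lambda\nabla w|^p$ by averaged norms of $\lambda F$ and $\lambda\nabla u$. For $A_1$ I would split $w=(w-u)+u$; since $\sigma=q_0$ and $\sigma_2\le 2$, Hölder's inequality in time reduces $A_1$ to the space-$L^\sigma$/time-$L^2$ norms of $\frac\lambda R(u-w)$ and $\frac\lambda R\,u$ over $Q_{4R}^\lambda$. The former is precisely the content of \eqref{u-wbdrycase1}; the latter is controlled by the parabolic Sobolev--Poincar\'e inequality with vanishing boundary values (admissible because $u=0$ on $\partial\Omega\cap B_{4R}(x_0)$) together with $\frac{n\sigma}{n+\sigma}\le\sigma_1$ and $\sigma_2\le\sigma_1$, giving a bound by $\big(\fint_{Q_{4R}^\lambda}|\lambda\nabla u|^{\sigma_1}\big)^{1/\sigma_1}$. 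For $\fint_{Q_{4R}^\lambda}|\lambda\nabla w|^p$ I would use $\nabla w=\nabla u-\nabla(u-w)$ together with \eqref{u-wLpbdry} and $p\le\sigma_1$.

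Substituting these bounds into the displayed inequality (applying \eqref{u-wbdrycase1} and \eqref{u-wLpbdry} with $\eta$ replaced by a suitable power such as $\eta^{\beta_1/\nu_0}$, so that after raising to the exponents $\nu_0/\beta_1$, $\nu_1/\beta_1$, $2\nu_1/p$ the small coefficient is again $\eta$) and absorbing the resulting products of powers by Young's and Hölder's inequalities, every term is dominated by a constant multiple of $G_{\gamma_0}$ evaluated at the three averages appearing on the right of \eqref{hbdry1}, for a suitable $\gamma_0\in(0,1)$ depending only on $n,p,\sigma_1,\kappa_0$; this establishes \eqref{hbdry1}. For \eqref{hbdry2} one writes $\nabla u-\tilde{\mathbf{h}}=\nabla(u-w)$, interpolates the $L^{\sigma_1}$ average between $L^p$ and $L^{\sigma_1+\kappa_0}$ on $Q_{R/16}^\lambda$, bounds the $L^p$ part by \eqref{u-wLpbdry}, bounds the $L^{\sigma_1+\kappa_0}$ average of $\nabla w$ by $\lambda\|\tilde{\mathbf{h}}\|_{L^\infty(Q_{R/16}^\lambda)}$ via \eqref{hbdry1}, enlarges $Q_{R/16}^\lambda$ to $Q_{4R}^\lambda$ at the cost of a dimensional constant absorbed into $\eta$, and finishes exactly as in Proposition~\ref{prop1}. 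I expect the only real difficulty to be organizational: tracking the flattening change of variables $\mathcal{T}$ and the odd reflection (already built into Lemmas~\ref{lemBdry1}--\ref{lemBdry2}) through the nested cylinders $Q_{4R}^\lambda\supset Q_{R/2}^\lambda\supset Q_{R/16}^\lambda$, and verifying that the exponent identities $\sigma=q_0$, $\frac{n\sigma}{n+\sigma}\le\sigma_1$, $p\le\sigma_1\le\sigma_1+\kappa_0$ hold simultaneously as $\beta_1\downarrow\frac{n(2-p)}{2}$; the substantive analytic estimates have all been carried out in the cited boundary lemmas.
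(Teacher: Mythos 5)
Your plan reproduces the paper's own proof of Proposition~\ref{prop2} essentially verbatim: set $\tilde{\mathbf{h}}=\nabla w$, apply Lemma~\ref{lemBdry2} (at radius $R/8$) and Lemma~\ref{lemBdry1}, bound $A_1$ by splitting $w=(w-u)+u$ and invoking \eqref{u-wbdrycase1} together with the boundary Poincar\'e--Sobolev inequality (using $u=0$ on $\partial\Omega\cap B_{4R}$), control $\fint_{Q^\lambda_{4R}}|\lambda\nabla w|^p$ via \eqref{u-wLpbdry}, and then derive \eqref{hbdry2} by interpolation exactly as in Proposition~\ref{prop1}. The exponent bookkeeping you check ($\sigma=q_0$, $\frac{nq_0}{n+q_0}\le\sigma_1$, $\sigma_2\le\sigma_1$) matches what the paper uses, so the argument is correct and not a different route.
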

\begin{proof}
	For simplicity, we let $(x_0,t_0)=(0,0)$. Let $\tilde{\mathbf{h}}=\nabla w$ where $w$ is a solution to \eqref{eqwbdry}. It follows from Lemma \ref{lemBdry1} and Lemma \ref{lemBdry2} that
	\begin{align*}
		\lambda\|\tilde{\mathbf{h}}\|_{L^\infty(Q_{R/16}^\lambda)}&\lesssim \left(\fint_{Q_{R/2}^\lambda}|\lambda\nabla w|^{\beta_{1}}\right)^{\frac{\nu_0}{\beta_{1}}}+\left(\fint_{Q_{R/2}^\lambda}|\lambda\nabla w|^{\beta_{1}}\right)^{\frac{\nu_1}{\beta_{1}}}\nonumber\\
		&\lesssim {A}_1^{\frac{\nu_0}{\beta_1}}+{A}_1^{\frac{2\nu_1}{p}}+
		\left[G_{\frac{2p-2}{p}}\left(\fint_{Q^\lambda_{4R}}|\lambda\nabla  w|^{p}\right)\right]^{\frac{\nu_0}{\beta_1}}+	\left[G_{\frac{2p-2}{p}}\left(\fint_{Q^\lambda_{4R}}|\lambda\nabla w|^{p}\right)\right]^{\frac{\nu_1}{\beta_1}},
	\end{align*}
	where $\beta_{1}>\frac{n(2-p)}{2}$ and $\nu_0$, $\nu_1$ are defined in Lemma \ref{z} and $A_1$ is defined by \eqref{def-A1} in Lemma \ref{lemBdry1}.
	By \eqref{u-wbdrycase1} in Lemma \ref{u-wbdry}, for $q_0=\frac{2\beta_1}{p}>\frac{n(2-p)}{p}$, $A_1$ can be estimated by
	\begin{align*}
		A_1
		\leq &C\left[\fint_{-\lambda^{p-2}
			(4R)^2}^0\left(\fint_{B_{4R}\cap\Omega}
		\left(\frac{\lambda}{R}|u-w|\right)^{q_0}dx\right)^\frac{\sigma_2}{q_0}dt\right]
		^\frac{1}{\sigma_2}\\
		&\quad\quad\quad\quad\quad\quad+C\left[\fint_{-\lambda^{p-2}
			(4R)^2}^0\left(\fint_{B_{4R}\cap\Omega}
		\left(\frac{\lambda}{R}|u|\right)^{q_0}dx\right)^\frac{\sigma_2}{q_0}dt\right]
		^\frac{1}{\sigma_2}\\
		\leq &	C(\eta,\kappa_0)\left(\fint_{Q_{4R}^\lambda}|\lambda F|^{\sigma_1}\right)^{\frac{n+p}{\sigma_1(n+2)}}+
		\eta\left(\fint_{Q_{4R}^\lambda}|\lambda \nabla u|^{\sigma_1+\kappa_0}\right)^{\frac{n+p}{(\sigma_1+\kappa_0)(n+2)}}\\
		&\quad\quad\quad\quad\quad\quad+C\left[\fint_{-\lambda^{p-2}
			(4R)^2}^0\left(\fint_{B_{4R}\cap\Omega}
		\left(\frac{\lambda}{R}|u|\right)^{q_0}dx\right)
		^\frac{\sigma_2}{q_0}dt\right]^\frac{1}{\sigma_2},
	\end{align*}
	where
	\[\sigma_1=\frac{q_0(n+p)}{q_0+n+2}> \frac{n(2-p)}{2},\quad\quad \text{and}\quad \sigma_2=\min\{\sigma_1,2\}.\]
	Note that $u=0$ on $B_{4R}\cap\Omega$, then by Poincar\'e--Sobolev's inequality (see \cite{DiBenedetto1}),
	\begin{align*}
		\left[\fint_{-\lambda^{p-2}
			(4R)^2}^0\left(\fint_{B_{4R}\cap\Omega}
		\left(\frac{\lambda}{R}|u|\right)^{q_0}dx\right)
		^\frac{\sigma_2}{q_0}dt\right]^\frac{1}{\sigma_2}&\lesssim \left[\fint_{-\lambda^{p-2}
			(4R)^2}^0\left(\fint_{B_{4R}\cap\Omega}
		|\lambda\nabla u|^{\frac{nq_0}{n+q_0}}dx\right)
		^\frac{\sigma_2(n+q_0)}{nq_0}dt\right]^\frac{1}{\sigma_2}\\
		&\lesssim \left(\fint_{Q^\lambda_{4R}}
		|\lambda\nabla u|^{\sigma_1}\right)^\frac{1}{\sigma_1}.
	\end{align*}
	Then we can follow the proof of Proposition \ref{prop1} and use Lemma \ref{lemBdry1}, Lemma \ref{lemBdry2} and Lemma \ref{u-wbdry} to obtain \eqref{hbdry1}. Here we omit the details.
	
	Next, using interpolation inequality and Young's inequality, one has
	\begin{align*}
		\left(\fint_{Q_{R/16}^\lambda}|\lambda(\nabla u-\tilde{\mathbf{h}})|^{\sigma_1} \right)^{\frac{1}{\sigma_1}}
		&\leq C(\eta,\kappa_0)\left(\fint_{Q_{R/16}^\lambda}|\lambda\nabla(u-w)|^{p} \right)^{\frac{1}{p}}+\eta^2\left(\fint_{Q_{R/16}^\lambda}|\lambda\nabla u|^{\sigma_1+\kappa_0} \right)^{\frac{1}{\sigma_1+\kappa_0}}\\&\quad\quad\quad\quad+\eta^2\left(\fint_{Q_{R/16}^\lambda}|\lambda\nabla w|^{\sigma_1+\kappa_0} \right)^{\frac{1}{\sigma_1+\kappa_0}}.
	\end{align*}
By \eqref{u-wLpbdry}, \eqref{hbdry1} and H\"{o}lder's inequality, we have
\begin{align*}
	&\left(\fint_{Q_{R/16}^\lambda}|\lambda(\nabla u-\tilde{\mathbf{h}})|^{\sigma_1} \right)^{\frac{1}{\sigma_1}}\\
	&\leq C(\eta,\kappa_0) \left(\fint_{Q_{R/2}^\lambda}  |\lambda F|^{p}\right)^\frac{1}{p} +2\eta^2\left(\fint_{Q_{R/2}^\lambda}|\lambda\nabla u|^{\sigma_1+\kappa_0} \right)^{\frac{1}{\sigma_1+\kappa_0}}+\eta^2 \lambda\|\mathbf{h}\|_{L^\infty(Q_{R/16}^\lambda)}   \\
	&\leq C(\eta,\kappa_0)G_{\gamma_0}\left(\left(\fint_{Q_{4R}^\lambda}|\lambda F|^{\sigma_1} \right)^{\frac{1}{\sigma_1}}\right)+\eta G_{\gamma_0}\left(\left(\fint_{Q_{4R}^\lambda}|\lambda\nabla u|^{\sigma_1+\kappa_0} \right)^{\frac{1}{\sigma_1+\kappa_0}}\right).
\end{align*}
Then we finish the proof of \eqref{hbdry2}.
\end{proof}
\begin{proposition}\label{prop22}
	Let $\Omega$ be a bounded domain in $\mathbb{R}^n$ with $C^2$ boundary, $u$ be a weak solution of \eqref{e1} in $\Omega_T$, and $w$ be a weak solution to \eqref{eqwbdry}. There exists $R_0>0$ small depending on $\partial \Omega$ such that for any $(x_0,t_0)\in \partial\Omega\times[0,T]$ and any $0<R< R_0$,
	there exists  $\tilde{\mathbf{h}}\in L^\infty({Q}_{R/2}^\lambda(x_0,t_0))$  and $\frac{n(2-p)}{2}<\sigma_1<n+p$ such that for any $\eta,\kappa_0\in(0,1)$
	\begin{align*}
		\lambda\|\tilde{\mathbf{h}}\|_{L^\infty({Q}_{R/2}^\lambda(x_0,t_0))}\leq&
		C G_{\gamma_0}\left(\left(\fint_{{Q}_{2R}^\lambda(x_0,t_0)}|\lambda\nabla u|^{\sigma_1} \right)^{\frac{1}{\sigma_1}}\right)+ C(\eta,\kappa_0)G_{\gamma_0}\left(\left(\fint_{{Q}_{2R}^\lambda(x_0,t_0)}|\lambda F|^{\sigma_1} \right)^{\frac{1}{\sigma_1}}\right)\\
		&\quad\quad	+\eta G_{\gamma_0}\left( \left(\fint_{{Q}_{2R}^\lambda(x_0,t_0)}|\lambda\nabla u|^{\sigma_1+\kappa_0} \right)^{\frac{1}{\sigma_1+\kappa_0}}\right),
	\end{align*}
	and
	\begin{align*}
		\left(\fint_{{Q}_{R/2}^\lambda(x_0,t_0)}|\lambda(\nabla u-\tilde{\mathbf{h}})|^{\sigma_1} \right)^{\frac{1}{\sigma_1}}&\leq \eta G_{\gamma_0}\left(\left(\fint_{{Q}_{2R}^\lambda(x_0,t_0)}|\lambda\nabla u|^{\sigma_1+\kappa_0} \right)^{\frac{1}{\sigma_1+\kappa_0}}\right)\\&\qquad+ C(\eta,\kappa_0)G_{\gamma_0}\left(\left(\fint_{{Q}_{2R}^\lambda(x_0,t_0)}|\lambda F|^{\sigma_1} \right)^{\frac{1}{\sigma_1}}\right),
	\end{align*}
with $G_{\gamma_0}$ defined in \eqref{formularG}
for some $\gamma_0\in (0,1)$, provided $[a]_{2R}\leq \eta^{\frac{n^4}{\kappa_0^4(p-1)^4}} $, $\nabla u\in L^{\sigma_1+\kappa_0}({Q}_{2R}^\lambda(x_0,t_0))$ and $F\in L^{\sigma_1}({Q}_{2R}^\lambda(x_0,t_0))$.
\end{proposition}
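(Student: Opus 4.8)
The plan is to deduce Proposition~\ref{prop22} from the boundary Proposition~\ref{prop2} (which carries the ``unbalanced'' radii $R/16$ and $4R$) together with the interior Proposition~\ref{prop1}, by a finite covering argument on the cylinder $Q_{R/2}^\lambda(x_0,t_0)$. A naive rescaling of Proposition~\ref{prop2} would force the right-hand side averages onto $Q_{32R}^\lambda$ rather than $Q_{2R}^\lambda$; localizing to pieces whose size is a small fixed fraction of $R$ repairs this at the cost of only dimensional constants. As always we take $(x_0,t_0)=(0,0)$.

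First I would fix absolute constants $1\ll c_1\ll\kappa$ depending only on $n$, set $\rho=R/\kappa$ and $\rho'=c_1\rho$, and cover $Q_{R/2}^\lambda$ by at most $N=N(n)$ parabolic cylinders of two kinds: \emph{interior pieces} $Q_{\rho/2}^\lambda(z,s)$ with $\operatorname{dist}(z,\partial\Omega)\ge 4\rho$ (so that $B_{2\rho}(z)\subset\subset\Omega$ and Proposition~\ref{prop1} applies at center $z$ and scale $\rho$), and \emph{boundary pieces} $Q_{\rho'/16}^\lambda(\bar z,s)$, where $z$ satisfies $\operatorname{dist}(z,\partial\Omega)<\rho'/32$, $\bar z\in\partial\Omega$ is its nearest point, and $\rho'<R_0$ (so that Proposition~\ref{prop2} applies at center $\bar z$ and scale $\rho'$). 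Such a covering exists with $N$ bounded by a power of $\kappa$: the sets $\{\operatorname{dist}\ge4\rho\}$ and $\{\operatorname{dist}<\rho'/32\}$ overlap and cover $\overline\Omega$ once $c_1$ is large; the spatial radii $\rho/2$ and $\rho'/16$ are both comparable to $R$; and the $\lambda$-intrinsic time-widths scale like the squares of the radii, so the factor $\lambda^{p-2}$ cancels in the multiplicity count and leaves it dimensional. Enlarging $\kappa$ if necessary one also arranges that the enlarged cylinders $Q_{2\rho}^\lambda(z,s)$ and $Q_{4\rho'}^\lambda(\bar z,s)$ all lie inside $Q_{2R}^\lambda(0,0)$: spatially because $4\rho'$ is a small fraction of $R$ and the centers lie in a slight enlargement of $B_{R/2}(0)$, and in time because, with the top-times chosen $\le 0$, the time-widths are a small fraction of $\lambda^{p-2}R^2$. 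Since $4\rho',2\rho\le 2R$, monotonicity of $r\mapsto[a]_r$ gives $[a]_{2\rho},[a]_{4\rho'}\le[a]_{2R}$, so the smallness hypothesis on $[a]_{2R}$ transfers to every piece.

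Next I would apply Proposition~\ref{prop1} on each interior piece and Proposition~\ref{prop2} on each boundary piece, using throughout the \emph{same} value of the free parameter $\beta_1>\frac{n(2-p)}{2}$. Since in both propositions the exponent $\sigma_1=\frac{q_0(n+p)}{q_0+n+2}$ with $q_0=\frac{2\beta_1}{p}$, as well as $\gamma_0$, are explicit functions of $\beta_1$ (and of $n,p$), all pieces share the same $\sigma_1\in(\frac{n(2-p)}{2},n+p)$ and the same $\gamma_0$. This produces, on the $j$-th small cylinder $P_j$, a map $\mathbf{h}_j\in L^\infty(P_j)$ satisfying the two estimates of Proposition~\ref{prop1} (resp. Proposition~\ref{prop2}), whose right-hand sides are averages over an enlarged cylinder $Q^{(j)}\subset Q_{2R}^\lambda(0,0)$. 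Ordering the pieces $P_1,\dots,P_N$, I set $\tilde{\mathbf{h}}:=\mathbf{h}_j$ on $P_j\setminus(P_1\cup\cdots\cup P_{j-1})$. Then $\lambda\|\tilde{\mathbf{h}}\|_{L^\infty(Q_{R/2}^\lambda)}=\max_j\lambda\|\mathbf{h}_j\|_{L^\infty(P_j)}$, each term is controlled by the right-hand side of \eqref{Linftyofw} or \eqref{hbdry1}, and replacing the average over $Q^{(j)}$ by the average over $Q_{2R}^\lambda(0,0)$ only multiplies the argument of $G_{\gamma_0}$ by $\left(|Q_{2R}^\lambda|/|Q^{(j)}|\right)^{1/\sigma_1}\le C(n)$, which is absorbed using that $G_{\gamma_0}$ is nondecreasing and $G_{\gamma_0}(cs)\le c^{1/\gamma_0}G_{\gamma_0}(s)$ for $c\ge1$. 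For the second estimate I use
\[
\int_{Q_{R/2}^\lambda}|\lambda(\nabla u-\tilde{\mathbf{h}})|^{\sigma_1}\le\sum_{j=1}^{N}\int_{P_j}|\lambda(\nabla u-\mathbf{h}_j)|^{\sigma_1},
\]
bound each summand by the second estimate of Proposition~\ref{prop1}/\ref{prop2}, and pass to averages over $Q_{2R}^\lambda(0,0)$ in the same way; the resulting factor $N^{1/\sigma_1}$ is dimensional.

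The genuine obstacle here is purely the geometric bookkeeping of the covering: one must pin down a single scale $\rho\sim R$, and the ratio $c_1$, for which simultaneously (i) $\{\operatorname{dist}<\rho'/32\}$ and $\{\operatorname{dist}\ge4\rho\}$ overlap and cover $\overline\Omega$; (ii) every enlarged cylinder $Q_{2\rho}^\lambda$ or $Q_{4\rho'}^\lambda$ sits inside $Q_{2R}^\lambda(0,0)$, both spatially and --- via the intrinsic scaling $t\sim\lambda^{2-p}x^2$ --- in time; and (iii) the covering multiplicity stays bounded by a constant depending only on $n$. These requirements are mutually compatible once $\kappa$ and $c_1$ are suitable absolute constants, but they must be checked together. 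Beyond this, the only points to watch are that the same $\beta_1$ (hence the same $\sigma_1$ and $\gamma_0$) is used on every piece and that the radius $R_0$ coming from Proposition~\ref{prop2} is not enlarged; with Propositions~\ref{prop1} and~\ref{prop2} in hand, no further ingredient is needed.
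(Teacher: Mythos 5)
Your proposal is correct and takes essentially the same approach as the paper: cover $Q_{R/2}^\lambda(x_0,t_0)$ by finitely many small intrinsic cylinders of radius comparable to $R$, apply Proposition~\ref{prop1} on pieces whose doubles sit inside $\Omega$ and Proposition~\ref{prop2} (shifted to a nearest boundary point) on the remaining pieces, glue the resulting $\mathbf{h}_j$'s, and pass the averages from the slightly enlarged cylinders to $Q_{2R}^\lambda$ using the almost-monotonicity of $G_{\gamma_0}$ and the monotonicity of $r\mapsto[a]_r$. If anything you are a bit more explicit than the paper (which cites only Proposition~\ref{prop2} for all pieces) in separating the interior and boundary cases and in keeping a single $\beta_1$, hence a single $\sigma_1$ and $\gamma_0$, across the cover.
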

\begin{proof} For any $\lambda, r>0$, define the cube $\mathcal{Q}^\lambda_r(x,t)=\{x+[-r,r]^n\}\times[t-\lambda^{p-2}r^2,t]$.
	There exists $m>0$ indenpendent of $R$ such that for any $(x_0,t_0)$, there exist $\{(x_k,t_k)\}_{k=1}^m\subset Q^\lambda_{R/2}(x_0,t_0)$ such that
	$$
Q^\lambda_{R/2}(x_0,t_0)\subset \cup_{k=1}^m\mathcal{Q}^{\lambda,k}_{R/100},\quad\quad\text{and}\quad \mathring{\mathcal{Q}}^{\lambda,k_1}_{R/100}\cap \mathring{\mathcal{Q}}^{\lambda,k_2}_{R/100}=\emptyset,\ k_1\neq k_2,
	$$
	where we denote $\mathcal{Q}^{\lambda,k}_{r}=\mathcal{Q}^{\lambda}_{r}(x_k,t_k)$, and $\mathring{\mathcal{Q}}^{\lambda,k}_{r}$ is the interior of $\mathcal{Q}^{\lambda,k}_{r}$, for $k=1,\cdots,m$ and any $r>0$.\\
	By Proposition \ref{prop2}, for any $k=1,\cdots,m$, there exists $\tilde{\mathbf{h}}_k\in L^\infty(Q^{\lambda,k}_{R/50})$ such that
	\begin{align*}
		\lambda\|\tilde{\mathbf{h}}_k\|_{L^\infty(\mathcal{Q}_{R/100}^{\lambda,k})}\leq	\lambda\|\tilde{\mathbf{h}}_k\|_{L^\infty({Q}_{R/50}^{\lambda,k})}&\leq
		C G_{\gamma_0}\left(\left(\fint_{{Q}_{R}^{\lambda,k}}|\lambda\nabla u|^{\sigma_1} \right)^{\frac{1}{\sigma_1}}\right)+ C(\eta,\kappa_0)G_{\gamma_0}\left(\left(\fint_{{Q}_{R}^{\lambda,k}}|\lambda F|^{\sigma_1} \right)^{\frac{1}{\sigma_1}}\right)\\
		&\quad\quad\quad+\eta G_{\gamma_0}\left(\left(\fint_{{Q}_{R}^{\lambda,k}}|\lambda\nabla u|^{\sigma_1+\kappa_0} \right)^{\frac{1}{\sigma_1+\kappa_0}}\right)
	\end{align*}
	and
	\begin{align*}
		\left(\fint_{\mathcal{Q}_{R/100}^{\lambda,k}}|\lambda(\nabla u-\tilde{\mathbf{h}}_k)|^{\sigma_1} \right)^{\frac{1}{\sigma_1}}&\leq 	\left(\fint_{Q_{R/50}^{\lambda,k}}|\lambda(\nabla u-\tilde{\mathbf{h}}_k)|^{\sigma_1} \right)^{\frac{1}{\sigma_1}}\\
		&\leq \eta^2 G_{\gamma_0}\left(\left(\fint_{{Q}_{R}^{\lambda,k}}|\lambda\nabla u|^{\sigma_1+\kappa_0} \right)^{\frac{1}{\sigma_1+\kappa_0}}\right) + C(\eta,\kappa_0)G_{\gamma_0}\left(\left(\fint_{{Q}_{R}^{\lambda,k}}|\lambda F|^{\sigma_1} \right)^{\frac{1}{\sigma_1}}\right)
	\end{align*}
	for some $\gamma_0\in (0,1)$ and $\kappa_0>0$.
	Denote $\mathbf{h}=\sum_{k=1}^m \mathbf{1}_{\mathcal{Q}^{\lambda,k}_{R/100}}\tilde{\mathbf{h}}_k$. Then
	\begin{align*}
		\lambda\|{\mathbf{h}}\|_{L^\infty({Q}_{R/2}^{\lambda})}&\leq \sum_{k=1}^m\lambda\|\tilde{\mathbf{h}}_k\|_{L^\infty(\mathcal{Q}_{R/100}^{\lambda,k})}\\
		&\leq	C G_{\gamma_0}\left(\left(\fint_{{Q}_{2R}^{\lambda}}|\lambda\nabla u|^{\sigma_1} \right)^{\frac{1}{\sigma_1}}\right)+ C(\eta,\kappa_0)G_{\gamma_0}\left(\left(\fint_{{Q}_{2R}^{\lambda}}|\lambda F|^{\sigma_1} \right)^{\frac{1}{\sigma_1}}\right)\\
		&\quad\quad\quad+\eta G_{\gamma_0}\left(\left(\fint_{{Q}_{2R}^{\lambda}}|\lambda\nabla u|^{\sigma_1+\kappa_0} \right)^{\frac{1}{\sigma_1+\kappa_0}}\right),
	\end{align*}
	and
	\begin{align*}
		&\left(\fint_{{Q}_{R/2}^{\lambda}}|\lambda(\nabla u-{\mathbf{h}})|^{\sigma_1} \right)^{\frac{1}{\sigma_1}}\leq C\sum_{k=1}^m	\left(\fint_{\mathcal{Q}_{R/100}^{\lambda,k}}|\lambda(\nabla u-{\tilde{\mathbf{h}}_k})|^{\sigma_1} \right)^{\frac{1}{\sigma_1}}
		\\&\quad 	\leq C\eta^2 \sum_{k=1}^mG_{\gamma_0}\left(\left(\fint_{{Q}_{R}^{\lambda,k}}|\lambda\nabla u|^{\sigma_1+\kappa_0} \right)^{\frac{1}{\sigma_1+\kappa_0}}\right)+ C(\eta,\kappa_0)\sum_{k=1}^mG_{\gamma_0}\left(\left(\fint_{{Q}_{R}^{\lambda,k}}|\lambda F|^{\sigma_1} \right)^{\frac{1}{\sigma_1}}\right) \\
		&\quad 	\leq\eta G_{\gamma_0}\left(\left(\fint_{{Q}_{2R}^{\lambda}}|\lambda\nabla u|^{\sigma_1+\kappa_0} \right)^{\frac{1}{\sigma_1+\kappa_0}}\right)+ C(\eta,\kappa_0)G_{\gamma_0}\left(\left(\fint_{{Q}_{2R}^{\lambda}}|\lambda F|^{\sigma_1} \right)^{\frac{1}{\sigma_1}}\right),
	\end{align*}
	which completes the proof.
\end{proof}\vspace{0.5cm}\\
Combining Proposition \ref{prop1} and Proposition \ref{prop22}, we obtain Theorem \ref{thm0}.
\section{Proof of Theorem \ref{thm1}}
In this section, we will prove Theorem \ref{thm1}.
Let us define for $\lambda>0$, $$\widetilde{Q}_{\rho}^\lambda(x_0,t_0)=B_\rho(x_0)\times(-\lambda^{p-2}\rho^2/2+t_0,\lambda^{p-2}\rho^2/2 +t_0) .$$
It is easy to check that $\widetilde{Q}_{\rho}^\lambda(x_0,t_0)=Q^\lambda_{\rho}(x_0,t_0+\lambda^{p-2}\rho^2/2)$. Hence all the results in the previous section also hold for $\widetilde{Q}_{\rho}^\lambda(x_0,t_0)$.\\
 Let $\mathbf{M}^\lambda$ denote the Hardy--Littlewood $\lambda$-maximal function defined for each locally integrable function $f$ in $\mathbb{R}^{n}\times\mathbb{R}$ by
\begin{align}\label{Mlamb}
	\mathbf{M}^\lambda(f)(x,t)=\sup_{\rho>0}\fint_{\widetilde{Q}^\lambda_{\rho}(x,t)}|f(y,s)|dyds,\ \forall (x,t)\in \mathbb{R}^{n+1}.
\end{align}
We first recall an important technical lemma. It is  the Krylov-Safonov Lemma (see \cite{Byun05,Byun07,MePhuc11}). 
\begin{lemma}\cite[Lemma 1.50]{QH1}\label{lem-eo}
	Let $\Omega$ be a bounded $C^1$ domain. Suppose that the sequence of balls $\{B_{r}(y_i)\}_{i=1}^L$ with centers $y_i\in \overline{\Omega}$ and a common radius $r>0$ covers $\Omega$. Set $s_i=T-i\lambda^{p-2}r^2/2$ for all $i=0,1,\cdots,[\frac{2T}{\lambda^{p-2}r^2}]$. Let $E\subset O\subset \Omega_T$ be
	 measurable sets for which there exists $0<\varepsilon<1$ such that $|E|<\varepsilon |\widetilde{Q}^\lambda_{r}(y_i,s_j)|$ for all $i=1,\cdots,L$ and $j=0,1,\cdots,[\frac{2T}{\lambda^{p-2}r^2}]$; and for all $(x,t)\in \Omega_T$, $\rho\in (0,2r]$, we have $\widetilde{Q}^\lambda_{\rho}(x,t)\cap \Omega_T\subset O$ if $|E\cap \widetilde{Q}^\lambda_\rho(x,t)|\geq \varepsilon |\widetilde{Q}^\lambda_\rho(x,t)|$. Then $|E|\leq C\varepsilon |O|$ for a constant $C$ depending only on $n$.
\end{lemma}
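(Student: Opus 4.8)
The plan is to prove this as a parabolic Krylov--Safonov covering lemma, by a stopping--time argument on the intrinsic cylinders $\widetilde Q^\lambda_\rho$. Write $m_r:=|\widetilde Q^\lambda_r(y_i,s_j)|$, which is independent of $(i,j)$, so the hypothesis reads $|E|<\varepsilon m_r$, and note that $\rho\mapsto|\widetilde Q^\lambda_\rho(x,t)|$ is increasing with $|\widetilde Q^\lambda_{2r}(x,t)|\ge m_r$. The first step is the elementary remark that every $(x,t)\in E$ lies in some grid cylinder $\widetilde Q^\lambda_r(y_i,s_j)$, which in turn is contained in $\widetilde Q^\lambda_{2r}(x,t)$ (since $|x-y_i|<r$ and $|t-s_j|<\lambda^{p-2}r^2/2$); hence for every $R\ge 2r$,
\[
|E\cap\widetilde Q^\lambda_R(x,t)|\le|E|<\varepsilon m_r\le\varepsilon|\widetilde Q^\lambda_{2r}(x,t)|\le\varepsilon|\widetilde Q^\lambda_R(x,t)| .
\]
So $E$ has density strictly below $\varepsilon$ in \emph{every} cylinder of radius $\ge 2r$ centered at a point of $E$.

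Next, for a.e.\ $(x,t)\in E$ (the Lebesgue density points for the nested family $\{\widetilde Q^\lambda_\rho(x,t)\}_{\rho>0}$), the ratio $|E\cap\widetilde Q^\lambda_\rho(x,t)|/|\widetilde Q^\lambda_\rho(x,t)|$ tends to $1$ as $\rho\to0^+$ and is $<\varepsilon$ at $\rho=2r$; being continuous in $\rho$, it attains the value $\varepsilon$ at a largest radius $\rho_{x,t}\in(0,2r)$, so $|E\cap\widetilde Q^\lambda_{\rho_{x,t}}(x,t)|\ge\varepsilon|\widetilde Q^\lambda_{\rho_{x,t}}(x,t)|$ and the density hypothesis gives $\widetilde Q^\lambda_{\rho_{x,t}}(x,t)\cap\Omega_T\subset O$. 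Applying the Vitali covering lemma to $\{\widetilde Q^\lambda_{\rho_{x,t}}(x,t)\}$ over the density points of $E$ — legitimate since these cylinders all have the same shape — produces an absolute constant $\chi\ge5$ and a countable pairwise disjoint subfamily $\{\widetilde Q^\lambda_{\rho_i}(x_i,t_i)\}_i$ with $E$ covered, up to a null set, by $\bigcup_i\widetilde Q^\lambda_{\chi\rho_i}(x_i,t_i)$.

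By maximality of $\rho_i$ if $\chi\rho_i\le2r$, and by the first step if $\chi\rho_i>2r$, the density of $E$ in each $\widetilde Q^\lambda_{\chi\rho_i}(x_i,t_i)$ is $<\varepsilon$, so
\[
|E|\le\sum_i|E\cap\widetilde Q^\lambda_{\chi\rho_i}(x_i,t_i)|<\varepsilon\chi^{n+2}\sum_i|\widetilde Q^\lambda_{\rho_i}(x_i,t_i)| .
\]
Splitting $|\widetilde Q^\lambda_{\rho_i}(x_i,t_i)|=|\widetilde Q^\lambda_{\rho_i}(x_i,t_i)\cap\Omega_T|+|\widetilde Q^\lambda_{\rho_i}(x_i,t_i)\setminus\Omega_T|$, disjointness together with $\widetilde Q^\lambda_{\rho_i}\cap\Omega_T\subset O$ makes the first terms sum to at most $|O|$; the second terms are controlled by the uniform measure--density property of the $C^1$ domain $\Omega$ (the inscribed ball of $\Omega$ at $x_i$ when $\rho_i\le2\,\mathrm{dist}(x_i,\partial\Omega)$, the interior corkscrew condition at the nearest boundary point otherwise), which yields $|\widetilde Q^\lambda_{\rho_i}\setminus\Omega_T|\le C|\widetilde Q^\lambda_{\rho_i}\cap\Omega_T|$ once the time endpoints $t=0,T$ are handled using the convention $u\equiv0$ for $t<0$. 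Hence $\sum_i|\widetilde Q^\lambda_{\rho_i}(x_i,t_i)|\le C|O|$ and $|E|\le C\varepsilon|O|$.

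The interior mechanism is routine; the main obstacle is this last estimate — showing that the disjoint stopping cylinders lose only a controlled fraction of their volume outside $\Omega_T$ — which is precisely why $\partial\Omega$ is taken of class $C^1$. Alternatively, one can run the same scheme through a Calder\'on--Zygmund decomposition of each grid cylinder $\widetilde Q^\lambda_r(y_i,s_j)$ into dyadic parabolic sub-cylinders (here $|E|<\varepsilon m_r$ guarantees that every grid cylinder is an admissible starting cylinder), with the selected sub-cylinders verifying the density hypothesis directly and the same boundary estimate closing the proof.
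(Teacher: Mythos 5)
The paper does not actually prove this lemma: it cites it as \cite[Lemma 1.50]{QH1} (and points to \cite{Byun05,Byun07,MePhuc11} for the parabolic Krylov--Safonov tradition), so there is no in-paper proof to compare against. Your Vitali stopping-time argument is the standard mechanism behind such covering lemmas and is correct in outline: the initial observation that $|E|<\varepsilon m_r$ forces sub-$\varepsilon$ density in every $\widetilde Q^\lambda_\rho(x,t)$ with $\rho\ge 2r$ centered at a density point of $E$; the continuity and Lebesgue-density argument producing a maximal stopping radius $\rho_{x,t}\in(0,2r)$ with density exactly $\varepsilon$; the Vitali selection in the natural quasi-metric associated to the $\lambda$-cylinders (which, for fixed $\lambda$, is a genuine translation-invariant metric, so the usual $5$-covering lemma applies); and the final summation using disjointness plus $\widetilde Q^\lambda_{\rho_i}\cap\Omega_T\subset O$.

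The one place where the writing glosses over a genuine step is the claim
\[
|\widetilde Q^\lambda_{\rho_i}(x_i,t_i)\setminus\Omega_T|\le C\,|\widetilde Q^\lambda_{\rho_i}(x_i,t_i)\cap\Omega_T|
\]
with a uniform $C$. The spatial part is fine for a bounded $C^1$ (hence Lipschitz, hence corkscrew) domain, but the temporal part is not automatic: if $\lambda^{p-2}\rho_i^2/2>T$, the time slab of $\widetilde Q^\lambda_{\rho_i}(x_i,t_i)$ has length $\lambda^{p-2}\rho_i^2$ while its intersection with $(0,T)$ has length at most $T$, and the ratio $\lambda^{p-2}\rho_i^2/T$ is not bounded by any absolute constant since the lemma allows arbitrary $\lambda$ and $r$. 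Your remark that this is ``handled using the convention $u\equiv 0$ for $t<0$'' does not apply here --- the lemma is a purely measure-theoretic statement with no $u$ in it. To close this, you need to argue that when $\lambda^{p-2}\rho_i^2$ exceeds order $T$, the stopping cylinder's time slab contains all of $(0,T)$ and the density hypothesis forces the spatial slices $E_s=\{x:(x,s)\in E\}$ to have $\varepsilon$-density in $B_{\rho_i}(x_i)$ for a positive fraction of $s$; one then runs the argument in $x$ alone, or more directly uses the first part of the hypothesis $|E|<\varepsilon m_r$, which in that regime bounds $|E|$ by $\varepsilon$ times a quantity comparable to $|\Omega_T|$ and hence to $|O|$ once $O$ contains a fixed fraction of a grid cylinder. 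This is the technical core that a complete proof (e.g.\ the one in \cite{QH1}) must spell out; as written your argument is correct in spirit but incomplete at precisely this point.
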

\begin{lemma}\label{cover}
Let $\sigma_1, \sigma_1+\kappa_0$ be those in Proposition \ref{prop1}.	Let  $u\in L^{\sigma_1+\kappa_0}(0,T,W^{1,\sigma_1+\kappa_0}_0(\Omega))$ be a weak solution of \eqref{e1}.  Then for any $\varepsilon>0$, one can find constants $\delta=\delta(n,p,\varepsilon,\sigma_1,\kappa_0)\in (0,1)$, $\delta_1=\delta_1(n,p,\varepsilon,\sigma_1,\kappa_0)\in (0,1)$ and $\Lambda=\Lambda(n,p,\sigma_1)>0$ such that if $[a]_{R_0}\leq \delta_1$ for some $R_0>0$, then for any $0<\lambda\leq \lambda_0$ with
	\begin{equation*}
		\lambda_0=(\varepsilon\Lambda^{\sigma_1}/C_0)^{\frac{1}{\sigma_1+2-p}}\|\nabla u\|_{L^{\sigma_1}(\Omega_T)}^{-\frac{\sigma_1}{\sigma_1+2-p}},
	\end{equation*}
there holds
	\begin{align*}
		&\big|\{ (\lambda{\bf M}^\lambda (|\nabla u|^{\sigma_1} ))^{\frac{1}{\sigma_1}}>\Lambda,\lambda({\bf M}^\lambda (|F|^{\sigma_1} ))^{\frac{1}{\sigma_1}}\leq \delta\}\cap U_{\lambda,
			\varepsilon}\cap\Omega_T\big|\\
		&\qquad\leq C\varepsilon \big|\{\lambda({\bf M}^\lambda (|\nabla u|^{\sigma_1} ))^{\frac{1}{\sigma_1}}>1\}\cap\Omega_T\big|.
	\end{align*}
  Here \[U_{\lambda,\varepsilon}=\{\lambda({\bf M}^\lambda (|\nabla u|^{\sigma_1+\kappa_0} ))^{\frac{1}{\sigma_1+\kappa_0}}\leq \varepsilon^{-1} \},\]
and the constants $C$, $C_0$ depend on $n,p$.
\end{lemma}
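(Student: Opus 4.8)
Put
$$E=\{(\lambda\mathbf{M}^{\lambda}(|\nabla u|^{\sigma_1}))^{1/\sigma_1}>\Lambda,\ \lambda(\mathbf{M}^{\lambda}(|F|^{\sigma_1}))^{1/\sigma_1}\le\delta\}\cap U_{\lambda,\varepsilon}\cap\Omega_T,\qquad O=\{\lambda(\mathbf{M}^{\lambda}(|\nabla u|^{\sigma_1}))^{1/\sigma_1}>1\}\cap\Omega_T,$$
with $u,F$ extended by $0$ outside $\Omega_T$, and fix the covering radius $r$ of Lemma \ref{lem-eo} small enough that $64r\le R_0$. The strategy is to verify the two hypotheses of the Krylov--Safonov-type Lemma \ref{lem-eo} for this pair $E\subset O$. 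For the first, the cylinders $\widetilde{Q}^{\lambda}_{\rho}(x,t)$ generate a doubling quasi-metric structure on $\mathbb{R}^{n+1}$, so $\mathbf{M}^{\lambda}$ is of weak type $(1,1)$; hence $|E|\le|\{\mathbf{M}^{\lambda}(|\nabla u|^{\sigma_1})>(\Lambda/\lambda)^{\sigma_1}\}|\le C_n(\lambda/\Lambda)^{\sigma_1}\|\nabla u\|_{L^{\sigma_1}(\Omega_T)}^{\sigma_1}$, which is $<\varepsilon|\widetilde{Q}^{\lambda}_{r}(y_i,s_j)|=\varepsilon c_n\lambda^{p-2}r^{n+2}$ for every covering cylinder precisely when $\lambda\le\lambda_0$, for a suitable $C_0=C_0(n,p)$.

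\textbf{The density estimate.} The heart of the matter is: for every $(x,t)\in\Omega_T$ and $0<\rho\le 2r$, if $\widetilde{Q}^{\lambda}_{\rho}(x,t)\cap\Omega_T\not\subset O$ then $|E\cap\widetilde{Q}^{\lambda}_{\rho}(x,t)|<\varepsilon|\widetilde{Q}^{\lambda}_{\rho}(x,t)|$. We may assume $E\cap\widetilde{Q}^{\lambda}_{\rho}(x,t)\ne\emptyset$ and pick $(x_2,t_2)$ in it together with $(x_1,t_1)\in(\widetilde{Q}^{\lambda}_{\rho}(x,t)\cap\Omega_T)\setminus O$. Using $(x_1,t_1)$ to dominate averages over cylinders not contained in $\widetilde{Q}^{\lambda}_{8\rho}(x,t)$ gives $\mathbf{M}^{\lambda}(|\nabla u|^{\sigma_1})(y,s)\le\max\{\mathbf{M}^{\lambda}((|\nabla u|^{\sigma_1})\mathbf{1}_{\widetilde{Q}^{\lambda}_{8\rho}(x,t)})(y,s),\,C_{\ast}\lambda^{-\sigma_1}\}$ for all $(y,s)\in\widetilde{Q}^{\lambda}_{\rho}(x,t)$, with $C_{\ast}=C_{\ast}(n,p)$. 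Now apply Theorem \ref{thm0} (valid for $\widetilde{Q}$-cylinders by the reduction noted above) on $\widetilde{Q}^{\lambda}_{2R}(x_0,t_0)$ with $R$ a fixed multiple of $\rho$: take $(x_0,t_0)=(x,t)$ and the interior case when $B_{8R}(x)\subset\Omega$, otherwise the boundary case centered at a point of $\partial\Omega$ within distance $C\rho$ of $x$; in either case $R\le R_0$, so $[a]_{2R}\le[a]_{R_0}\le\delta_1$, and the cylinders are arranged so that $\widetilde{Q}^{\lambda}_{8\rho}(x,t)\cap\Omega_T\subset\widetilde{Q}^{\lambda}_{R/2}(x_0,t_0)$ while $(x_1,t_1),(x_2,t_2)\in\widetilde{Q}^{\lambda}_{2R}(x_0,t_0)$, which lies in a bounded dilate of each $\widetilde{Q}^{\lambda}_{\rho}(x_i,t_i)$. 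Since $(x_1,t_1)\notin O$ and $(x_2,t_2)\in E$, the three data averages in Theorem \ref{thm0} obey $(\fint_{\widetilde{Q}^{\lambda}_{2R}}|\lambda\nabla u|^{\sigma_1})^{1/\sigma_1}\le C$, $(\fint_{\widetilde{Q}^{\lambda}_{2R}}|\lambda F|^{\sigma_1})^{1/\sigma_1}\le C\delta$ and $(\fint_{\widetilde{Q}^{\lambda}_{2R}}|\lambda\nabla u|^{\sigma_1+\kappa_0})^{1/(\sigma_1+\kappa_0)}\le C\varepsilon^{-1}$ with $C=C(n,p)$, so Theorem \ref{thm0} produces $\mathbf{h}$ on $\widetilde{Q}^{\lambda}_{R/2}(x_0,t_0)$ with $\lambda\|\mathbf{h}\|_{L^{\infty}}\le CG_{\gamma_0}(C)+C(\eta,\kappa_0)G_{\gamma_0}(C\delta)+\eta G_{\gamma_0}(C\varepsilon^{-1})$ and $(\fint_{\widetilde{Q}^{\lambda}_{R/2}}|\lambda(\nabla u-\mathbf{h})|^{\sigma_1})^{1/\sigma_1}\le\eta G_{\gamma_0}(C\varepsilon^{-1})+C(\eta,\kappa_0)G_{\gamma_0}(C\delta)$, where $\gamma_0=\gamma_0(n,p,\sigma_1)\in(0,1)$.

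\textbf{Closing the argument.} Fix the parameters in the order $\Lambda\to\eta\to\delta\to\delta_1$: first $\Lambda=\Lambda(n,p,\sigma_1)$ with $\Lambda^{\sigma_1}\ge 2^{\sigma_1}C_{\ast}$ and $\Lambda^{\sigma_1}\ge 2^{\sigma_1+2}(CG_{\gamma_0}(C)+2)^{\sigma_1}$; next $\eta=\eta(n,p,\sigma_1,\kappa_0,\varepsilon)$ so small that $\eta G_{\gamma_0}(C\varepsilon^{-1})\le\min\{1,\tfrac{1}{2}(\varepsilon\Lambda^{\sigma_1}C^{-1})^{1/\sigma_1}\}$; then $\delta=\delta(n,p,\sigma_1,\kappa_0,\varepsilon)$ so small that $C(\eta,\kappa_0)G_{\gamma_0}(C\delta)$ obeys the same bound (here we use $\gamma_0\in(0,1)$, so $G_{\gamma_0}(s)\to 0$ as $s\to 0$); finally $\delta_1=\eta^{n^4/(\kappa_0^4(p-1)^4)}$, which is exactly the BMO--smallness demanded by Theorem \ref{thm0}. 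Then $\lambda\|\mathbf{h}\|_{L^{\infty}}\le CG_{\gamma_0}(C)+2$, and on $E\cap\widetilde{Q}^{\lambda}_{\rho}(x,t)$ — where $\mathbf{M}^{\lambda}(|\nabla u|^{\sigma_1})>(\Lambda/\lambda)^{\sigma_1}>C_{\ast}\lambda^{-\sigma_1}$ — the maximum above is realized by the first term, so from $|\nabla u|^{\sigma_1}\le 2^{\sigma_1-1}(|\nabla u-\mathbf{h}|^{\sigma_1}+|\mathbf{h}|^{\sigma_1})$ and $\Lambda^{\sigma_1}\ge 2^{\sigma_1+2}(\lambda\|\mathbf{h}\|_{L^{\infty}})^{\sigma_1}$ we get $\mathbf{M}^{\lambda}((|\nabla u-\mathbf{h}|^{\sigma_1})\mathbf{1}_{\widetilde{Q}^{\lambda}_{8\rho}(x,t)})(y,s)>2^{-\sigma_1-1}(\Lambda/\lambda)^{\sigma_1}$ there; the weak $(1,1)$ bound together with $|\widetilde{Q}^{\lambda}_{R/2}|\le C(n)|\widetilde{Q}^{\lambda}_{\rho}(x,t)|$ and the second estimate of Theorem \ref{thm0} then yield $|E\cap\widetilde{Q}^{\lambda}_{\rho}(x,t)|\le C\Lambda^{-\sigma_1}|\widetilde{Q}^{\lambda}_{\rho}(x,t)|(\eta G_{\gamma_0}(C\varepsilon^{-1})+C(\eta,\kappa_0)G_{\gamma_0}(C\delta))^{\sigma_1}<\varepsilon|\widetilde{Q}^{\lambda}_{\rho}(x,t)|$. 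This is the second hypothesis of Lemma \ref{lem-eo}, which therefore delivers $|E|\le C\varepsilon|O|$, i.e. the claim. The delicate point is precisely this ordering of constants: $\Lambda$ must be frozen using only $n,p,\sigma_1$ before $\eta,\delta$ are picked, which is legitimate because the leading term $CG_{\gamma_0}(C)$ of the $L^{\infty}$-bound for $\mathbf{h}$ does not see $\eta$ or $\delta$ while the remaining (genuinely nonlinear in $G_{\gamma_0}$) terms are made $\le 2$ afterwards; the boundary case adds nothing new since Proposition \ref{prop22}, already absorbed into Theorem \ref{thm0}, covers it.
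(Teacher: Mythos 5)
Your proposal is correct and follows essentially the same route as the paper's own proof: verify the two hypotheses of the Krylov--Safonov Lemma \ref{lem-eo} via the weak-type bound and a contrapositive density estimate, apply Theorem \ref{thm0} (i.e.\ Propositions \ref{prop1}/\ref{prop22}) after localising the maximal function using the non-exit point $(x_1,t_1)\notin O$ and the point $(x_2,t_2)\in E$ to control the three data averages, and then fix the constants in the order $\Lambda\to\eta\to\delta\to\delta_1$ exactly as the paper does, the point being that the leading $L^\infty$ contribution to $\mathbf{h}$ is independent of $\eta,\delta$. The only deviations are cosmetic (dilation factors $8\rho$ versus $2r$, and bookkeeping constants), so there is nothing to flag.
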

\begin{proof}
	We follow some ideas in \cite{Hung2,QH1,HungPhuc4,HungPhuc3}.	Let
	\begin{equation*}
		O_{\lambda}=\{\lambda({\bf M}^\lambda (|\nabla u|^{\sigma_1} ))^{\frac{1}{\sigma_1}}>1\}\cap\Omega_T,
	\end{equation*}
and
 \begin{align*}
		E_{\lambda,\varepsilon}=\{ \lambda({\bf M}^\lambda (|\nabla u|^{\sigma_1} ))^{\frac{1}{\sigma_1}}>\Lambda,\lambda({\bf M}^\lambda (|F|^{\sigma_1} ))^{\frac{1}{\sigma_1}}\leq \delta\}\cap U_{\lambda,
		\varepsilon}\cap\Omega_T.
	\end{align*}
Let $T_0=\text{diam} (\Omega) + T^{1/2}$. Let $\{y_i\}_{i=1}^L\subset \Omega$ and a ball $B_0$ with radius $2T_0$ such that
\[\Omega \subset \bigcup_{i=1}^L B_{r_0}(y_i)\subset B_0,\]
where $r_0= \min\{T_0,R_0/100\}$ with $R_0$ defined by Proposition \ref{prop22}. Let $s_j=T-j\lambda^{p-2}r_0^2/2$ for $j=0,1,\cdots, [\frac{2T}{\lambda^{p-2}r_0^2}]+1$ and $Q_{2T_0}^\lambda=B_0\times (T-\lambda^{p-2}(2T_0)^2, T)$. So,
\[\Omega_T\subset \bigcup_{i,j}Q_{r_0}^\lambda(y_i,s_j)\subset Q_{2T_0}^\lambda.\]

We verify that
\begin{equation}\label{cond1}
	|E_{\lambda,\varepsilon}|\leq \frac{\varepsilon}{2} |\widetilde{Q}_{r_0}^\lambda(y_i,s_j)|,\  \forall i=1,\cdots,L,\quad\text{and }\quad j=0,1,\cdots,\left[\frac{2T}{\lambda^{p-2}r^2}\right]+1
\end{equation}
holds for $
	\lambda\leq  (\varepsilon\Lambda^{\sigma_1}/C_0)^{\frac{1}{\sigma_1+2-p}}\|\nabla u\|_{L^{\sigma_1}(\Omega_T)}^{-\frac{\sigma_1}{\sigma_1+2-p}}
$ with $C_0=C_0(n,p,r_0)$ being a large constant.
In fact, for such $\lambda$, one has
\begin{align*}
	&|E_{\lambda,\varepsilon}|\leq\frac{C\lambda^{\sigma_1}}{\Lambda^{\sigma_1}}\int_{\Omega_T}|\nabla u|^{\sigma_1}\leq \frac{C}{C_0}\varepsilon r_0^{n+2}\lambda^{p-2}\leq \varepsilon |\widetilde{Q}_{r_0}^\lambda(y_i,s_j)|.
\end{align*}
Next, we verify that for all $(x,t)\in \Omega_T$ and $r\in (0,2r_0]$ and $\lambda>0$, we have
\begin{align}\label{cond2}
	|E_{\lambda,\varepsilon}\cap \widetilde{Q}_r^\lambda(x,t)|\geq \varepsilon |\widetilde{Q}_r^\lambda(x,t)|\Rightarrow \widetilde{Q}_r^\lambda(x,t)\cap\Omega_T\subset O_\lambda.
\end{align}
 Indeed, take $(x,t)\in \Omega_T$ and $0<r\leq 2r_0$. By contraposition, assume that $\widetilde{Q}_r^\lambda(x,t)\cap\Omega_T\cap O_\lambda^c\neq \emptyset$ and $E_{\lambda,\varepsilon}\cap \widetilde{Q}_r^\lambda(x,t)\neq \emptyset$, i.e., there exist $(x_1,t_1),(x_2,t_2)\in \widetilde{Q}_r^\lambda(x,t)\cap\Omega_T$ such that
\begin{equation}\label{xt1}
	\lambda({\bf M}^\lambda (|\nabla u|^{\sigma_1} ))^{\frac{1}{\sigma_1}}(x_1,t_1)\leq 1,
\end{equation}
and
\begin{equation}\label{xt2}
	\lambda({\bf M}^\lambda (|F|^{\sigma_1} ))^{\frac{1}{\sigma_1}}(x_2,t_2)\leq \delta,\ \lambda({\bf M}^\lambda (|\nabla u|^{\sigma_1+\kappa_0} ))^{\frac{1}{\sigma_1+\kappa_0}}(x_2,t_2)\leq \varepsilon^{-1}.
\end{equation}
We need to prove that
\begin{align}\label{Elambeps}
	|E_{\lambda,\varepsilon}\cap \widetilde{Q}_r^\lambda(x,t)|< \varepsilon |\widetilde{Q}_r^\lambda(x,t)|.
\end{align}
It follows from \eqref{xt1} that
\begin{equation}\nonumber
	\lambda({\bf M}^\lambda (|\nabla u|^{\sigma_1} ))^{\frac{1}{\sigma_1}}(y,s)\leq \max\{\lambda(\textbf{M}^\lambda(\chi_{\widetilde{Q}_{2r}^\lambda(x,t)}|\nabla u|^{\sigma_1}))^{\frac{1}{\sigma_1}}(y,s),3^{\frac{n+2}{\sigma_1}}\},\ \forall (y,s)\in \widetilde{Q}_r^\lambda(x,t).
\end{equation}
Therefore, for all $\lambda>0$ and $\Lambda\geq 3^\frac{n+2}{\sigma_1}$,
\begin{align}\label{Elamb}
	E_{\lambda,\varepsilon}\cap \widetilde{Q}_r^\lambda(x,t)=\{\lambda(\textbf{M}^\lambda(\chi_{\widetilde{Q}_{2r}^\lambda(x,t)}|\nabla u|^{\sigma_1}))^{\frac{1}{\sigma_1}}\geq \Lambda, \lambda({\bf M}^\lambda (|F|^{\sigma_1} ))^{\frac{1}{\sigma_1}}\leq \delta\}\cap U_{\lambda,\varepsilon}\cap\Omega_T\cap\widetilde{Q}_r^\lambda(x,t).
\end{align}
To prove \eqref{Elambeps}, we separately consider the case $B_{8r}(x)\subset\subset \Omega$ and the case $\overline{B_{8r}(x)}\cap \Omega^c\neq \emptyset$.

We first consider the case $B_{8r}\subset\subset \Omega$. Using Proposition \ref{prop1} in $Q_{8r}^\lambda(x,t_0)$ where $t_0=\min\{t+\lambda^{p-2}2r^2,T\}$, there exists $\mathbf{h}$ such that
\begin{align*}
	\lambda\|\mathbf{h}\|_{L^\infty({Q}_{2r}^\lambda(x,t_0))}\leq&
	C G_{\gamma_0}\left(\left(\fint_{{Q}_{8r}^\lambda(x,t_0)}|\lambda\nabla u|^{\sigma_1} \right)^{\frac{1}{\sigma_1}}\right)+ C(\eta,\kappa_0)G_{\gamma_0}\left(\left(\fint_{{Q}_{8r}^\lambda(x,t_0)}|\lambda F|^{\sigma_1} \right)^{\frac{1}{\sigma_1}}\right)\\
	&\quad\quad\quad +\eta G_{\gamma_0}\left(\left(\fint_{{Q}_{8r}^\lambda(x,t_0)}|\lambda\nabla u|^{\sigma_1+\kappa_0} \right)^{\frac{1}{\sigma_1+\kappa_0}}\right),
\end{align*}
and for any $\eta>0$,
\begin{align*}
	\left(\fint_{{Q}_{2r}^\lambda(x,t_0)}|\lambda(\nabla u-\mathbf{h})|^{\sigma_1} \right)^{\frac{1}{\sigma_1}}&\leq \eta
	G_{\gamma_0}\left(\left(\fint_{{Q}_{8r}^\lambda(x,t_0)}|\lambda\nabla u|^{\sigma_1+\kappa_0} \right)^{\frac{1}{\sigma_1+\kappa_0}}\right)\\&\quad \quad\quad+ C(\eta,\kappa_0)G_{\gamma_0}\left(\left(\fint_{{Q}_{8r}^\lambda(x,t_0)}|\lambda F|^{\sigma_1} \right)^{\frac{1}{\sigma_1}}\right)
\end{align*}
provided $[a]_{R_0}\leq \eta^{\frac{n^4}{\kappa_0^4(p-1)^4}}$.
Thanks to \eqref{xt1} and \eqref{xt2} with $(x_1,t_1),(x_2,t_2)\in \widetilde{Q}_r^\lambda(x,t)\cap\Omega_T$, we get that
\begin{equation}\label{DvLinf}
	\begin{split}
	\lambda\|\mathbf{h}\|_{L^\infty(\widetilde{Q}_{2r}^\lambda(x,t))}&\leq
	C G_{\gamma_0}\left(\left(\fint_{\widetilde{Q}_{17r}^\lambda(x_1,t_1)}|\lambda\nabla u|^{\sigma_1} \right)^{\frac{1}{\sigma_1}}\right)+ C(\eta,\kappa_0)G_{\gamma_0}\left(\left(\fint_{\widetilde{Q}_{17r}^\lambda(x_2,t_2)}|\lambda F|^{\sigma_1} \right)^{\frac{1}{\sigma_1}}\right)\\
	&\quad\qquad\quad\quad\quad\quad+\eta G_{\gamma_0}\left(\left(\fint_{\widetilde{Q}_{17r}^\lambda(x_2,t_2)}|\lambda\nabla u|^{\sigma_1+\kappa_0} \right)^{\frac{1}{\sigma_1+\kappa_0}}\right)\\
	&\leq C G_{\gamma_0}\left( \lambda\left(\mathbf{M}^\lambda (|\nabla u|^{\sigma_1})(x_1,t_1)\right)^\frac{1}{\sigma_1} \right)+ C(\eta,\kappa_0) G_{\gamma_0}\left(\lambda \left(\mathbf{M}^\lambda (|F|^{\sigma_1})(x_2,t_2)\right)^\frac{1}{\sigma_1} \right)\\
	&\quad\quad\quad+C \eta G_{\gamma_0} \left( \lambda \left({\bf M}^\lambda (|\nabla u|^{\sigma_1+\kappa_0} )(x_2,t_2)\right)^{\frac{1}{\sigma_1+\kappa_0}}\right)\\
	&\leq C(1+C(\eta,\kappa_0)\delta^{\gamma_0}+\eta\varepsilon^{-1/\gamma_0})\leq C_1,
	\end{split}
\end{equation}
provided $C(\eta,\kappa_0)\delta^{\gamma_0},\eta\varepsilon^{-1/{\gamma_0}}\leq 1$, 
and
\begin{align}
	&\left(\fint_{\widetilde{Q}_{2r}^\lambda(x,t)}|\lambda(\nabla u-\mathbf{h})|^{\sigma_1} \right)^{\frac{1}{\sigma_1}}\nonumber\\&\leq C_2\eta
	G_{\gamma_0}\left(\left(\fint_{\widetilde{Q}_{17r}^\lambda(x_2,t_2)}|\lambda\nabla u|^{\sigma_1+\kappa_0} \right)^{\frac{1}{\sigma_1+\kappa_0}}\right)\nonumber+ C_2C(\eta,\kappa_0)G_{\gamma_0}\left(\left(\fint_{\widetilde{Q}_{17r}^\lambda(x_2,t_2)}|\lambda F|^{\sigma_1} \right)^{\frac{1}{\sigma_1}}\right)\nonumber\\
	&\leq C_2	\eta G_{\gamma_0} \left( \lambda \left({\bf M}^\lambda (|\nabla u|^{\sigma_1+\kappa_0} )(x_2,t_2)\right)^{\frac{1}{\sigma_1+\kappa_0}}\right)
	+C_2 C(\eta,\kappa_0)	G_{\gamma_0}\left( \lambda\left(\mathbf{M}^\lambda (|F|^{\sigma_1})(x_2,t_2) \right)^\frac{1}{\sigma_1}\right)\nonumber\\
	& \leq C_2\eta\varepsilon^{-1/{\gamma_0}}+C_2C(\eta,\kappa_0)\delta^{\gamma_0}.\label{Du-Dv}
\end{align}
Note that by \eqref{Elamb} we find
\begin{equation}\label{E-lambda}
	\begin{split}
		|E_{\lambda,\varepsilon}\cap \widetilde{Q}_r^\lambda(x,t)|\leq&\left| \left\{\lambda\left(\textbf{M}^\lambda(\chi_{\widetilde{Q}_{2r}^\lambda(x,t)}|\nabla u-\mathbf{h}|^{\sigma_1})\right)^{\frac{1}{\sigma_1}}\geq \Lambda/4\right\}\cap\widetilde{Q}_r^\lambda(x,t) \right|\\
		&\quad+\left| \left\{\lambda\left(\textbf{M}^\lambda(\chi_{\widetilde{Q}_{2r}^\lambda(x,t)}|\mathbf{h}|^{\sigma_1})\right)^{\frac{1}{\sigma_1}}\geq \Lambda/4\right\}\cap\widetilde{Q}_r^\lambda(x,t) \right|.
	\end{split}
\end{equation}
On the other hand, in view of \eqref{DvLinf} we see that for $\Lambda\geq  8\max\{3^\frac{n+2}{\sigma_1},C_1\}$ ($C_1$ is the constant in \eqref{DvLinf}), it holds that
\[\left| \left\{\lambda\left(\textbf{M}^\lambda(\chi_{\widetilde{Q}_{2r}^\lambda(x,t)}|\mathbf{h}|^{\sigma_1})\right)^{\frac{1}{\sigma_1}}\geq \Lambda/4\right\}\cap\widetilde{Q}_r^\lambda(x,t) \right|=0.\]
Thus we deduce from \eqref{Du-Dv} and \eqref{E-lambda} that
\begin{align*}
	&|E_{\lambda,\varepsilon}\cap \widetilde{Q}_r^\lambda(x,t)|\leq C_3\lambda^{\sigma_1}\int_{\widetilde{Q}_{2r}^\lambda(x,t)}|\nabla u-\mathbf{h}|^{\sigma_1}
	\leq C_3\left(C_2\eta\varepsilon^{-1/{\gamma_0}}+C_2C(\eta,\kappa_0)\delta^{\gamma_0}\right)^{\sigma_1}|\widetilde{Q}_{r}^\lambda(x,t)|.
\end{align*}
For any fixed $\varepsilon>0$, we first take $\eta$ so small that $C_2\eta\varepsilon^{-1/{\gamma_0}}< \frac{\varepsilon}{4C_3}$. Then fix this $\eta$, we take $\delta$ small to get $C_2C(\eta,\kappa_0)\delta^{\gamma_0}<\frac{\varepsilon}{4C_3}$, and take 
$\delta_1=\eta^{\frac{n^4}{\kappa_0^4(p-1)^4}}$.
 After that, we obtain
\begin{align*}
	&|E_{\lambda,\varepsilon}\cap \widetilde{Q}_r^\lambda(x,t)|<\varepsilon |\widetilde{Q}_{r}^\lambda(x,t)|.
\end{align*}

Next we consider the case $\overline{B_{8r}(x)}\cap \Omega^c\neq \emptyset$. Let $x_3\in \partial\Omega$ be such that $|x_3-x|=\operatorname{dist}(x,\partial\Omega)$. Set $t_0=\min\{t+\lambda^{p-2}2r^2,T\}$. We have
\begin{align*}
	{Q}_{2r}^\lambda(x,t_0)\subset {Q}_{10r}^\lambda(x_3,t_0)\subset {Q}_{40r}^\lambda(x_3,t_0)\subset \widetilde{Q}_{80r}^\lambda(x_3,t)\subset \widetilde{Q}_{88r}^\lambda(x,t)\subset \widetilde{Q}_{89r}^\lambda(x_1,t_1)
\end{align*}
and
\begin{align*}
	{Q}_{40r}^\lambda(x_3,t_0)\subset \widetilde{Q}_{80r}^\lambda(x_3,t)\subset \widetilde{Q}_{88r}^\lambda(x,t)\subset \widetilde{Q}_{89r}^\lambda(x_2,t_2).
\end{align*}
Applying Proposition \ref{prop22} in  $ {Q}_{40r}^\lambda(x_3,t_0)$, there exists $\mathbf{h}$ such that
\begin{align*}
	\lambda\|\mathbf{h}\|_{L^\infty(\widetilde{Q}_{2r}^\lambda(x,t))}&\leq\lambda\|\mathbf{h}\|_{L^\infty({Q}_{10r}^\lambda(x_3,t_0))}\\&\leq
	C G_{\gamma_0}\left(\left(\fint_{\widetilde{Q}_{89r}^\lambda(x_1,t_1)}|\lambda\nabla u|^{\sigma_1} \right)^{\frac{1}{\sigma_1}}\right)+ C(\eta,\kappa_0)G_{\gamma_0}\left(\left(\fint_{\widetilde{Q}_{89r}^\lambda(x_2,t_2)}|\lambda F|^{\sigma_1} \right)^{\frac{1}{\sigma_1}}\right)\\
	&\quad\quad\quad+	\eta G_{\gamma_0}\left(\left(\fint_{\widetilde{Q}_{89r}^\lambda(x_2,t_2)}|\lambda\nabla u|^{\sigma_1+\kappa_0} \right)^{\frac{1}{\sigma_1+\kappa_0}}\right),
\end{align*}
and for any $\eta>0$,
\begin{align*}
	&\left(\fint_{\widetilde{Q}_{2r}^\lambda(x,t)}|\lambda(\nabla u-\mathbf{h})|^{\sigma_1} \right)^{\frac{1}{\sigma_1}}\leq \left(\fint_{{Q}_{10r}^\lambda(x_3,t_0)}|\lambda(\nabla u-\mathbf{h})|^{\sigma_1} \right)^{\frac{1}{\sigma_1}}\\&\quad\quad \leq\eta
	G_{\gamma_0}\left(\left(\fint_{\widetilde{Q}_{89r}^\lambda(x_2,t_2)}|\lambda\nabla u|^{\sigma_1+\kappa_0} \right)^{\frac{1}{\sigma_1+\kappa_0}}\right)+ C(\eta,\kappa_0)G_{\gamma_0}\left(\left(\fint_{\widetilde{Q}_{89r}^\lambda(x_2,t_2)}|\lambda F|^{\sigma_1} \right)^{\frac{1}{\sigma_1}}\right).
\end{align*}
As above, we can also obtain
\begin{align*}
	|E_{\lambda,\varepsilon}\cap \widetilde{Q}_r^\lambda(x,t)|\leq C\left(C_2\eta\varepsilon^{-1/{\gamma_0}}+C_2C(\eta,\kappa_0)\delta^{\gamma_0}\right)^{\sigma_1}|\widetilde{Q}_{r}^\lambda(x,t)|<\varepsilon |\widetilde{Q}_{r}^\lambda(x,t)|.
\end{align*}

Using \eqref{cond1} and \eqref{cond2}, we can now apply Lemma \ref{lem-eo} with $E=E_{\lambda,\varepsilon}$ and $O=O_\lambda$ to complete the proof.
\end{proof}
\begin{remark}
	Note that when $p>\frac{2n}{n+2}$, by Remark \ref{reth0} we obtain Lemma \ref{cover} with $\sigma_1=p$ and $\kappa_0=0$.
\end{remark}

We recall an useful lemma which concerns weak (1,1)-estimates for $\lambda$-maximal function. The proof can be found in \cite{Byun21}.
\begin{lemma} \cite[Lemma 2.12]{Byun21}
	If $f\in L^1(\mathbb{R}^{n+1})$, then there exists a constant $c=c(n)\geq 1$ such that
\begin{align}\label{M}
	\left|\{(y,s)\in \mathbb{R}^{n+1}:\mathbf{M}^\lambda f(y,s)>2\alpha\}\right|\leq\frac{c}{\alpha}\int_{\{ |f|>\alpha\}}|f(x,t)|dxdt,
\end{align}
for any $\alpha>0$.
\end{lemma}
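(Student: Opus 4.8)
The plan is to reduce \eqref{M} to the classical weak $(1,1)$ estimate for the Hardy--Littlewood maximal operator on a doubling metric measure space, combined with the standard Calder\'on--Zygmund truncation.

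First I would observe that the cylinders $\widetilde{Q}^\lambda_\rho(x,t)=B_\rho(x)\times(t-\lambda^{p-2}\rho^2/2,\,t+\lambda^{p-2}\rho^2/2)$ are precisely the balls of radius $\rho$ for the metric $d_\lambda\big((x,t),(y,s)\big)=\max\{|x-y|,\,(2\lambda^{2-p}|t-s|)^{1/2}\}$ on $\mathbb{R}^{n+1}$ (this is a genuine metric, being the maximum of the Euclidean metric on $\mathbb{R}^n$ and the square-root metric on $\mathbb{R}$), and that $\big(\mathbb{R}^{n+1},d_\lambda,\mathcal{L}^{n+1}\big)$ is doubling with constant $2^{n+2}$ \emph{independent of $\lambda$}, since $|\widetilde{Q}^\lambda_{2\rho}(x,t)|=2^{n+2}|\widetilde{Q}^\lambda_\rho(x,t)|$. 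Equivalently, the affine change of variables $(x,t)\mapsto(x,\lambda^{2-p}t)$ carries $\widetilde{Q}^\lambda_\rho$ onto the standard parabolic cylinder $B_\rho(x)\times(-\rho^2/2,\rho^2/2)$ and multiplies $\mathcal{L}^{n+1}$ by the constant Jacobian $\lambda^{2-p}$, so $\mathbf{M}^\lambda$ is, up to this ratio-preserving rescaling, the classical parabolic maximal operator; consequently every constant appearing below depends only on $n$.

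Next I would record the weak $(1,1)$ bound for the uncentred maximal operator attached to this family. Fix $g\in L^1(\mathbb{R}^{n+1})$ and $\alpha>0$; for each $(y,s)$ with $\mathbf{M}^\lambda g(y,s)>\alpha$ choose $\rho_{y,s}>0$ with $\int_{\widetilde{Q}^\lambda_{\rho_{y,s}}(y,s)}|g|>\alpha\,|\widetilde{Q}^\lambda_{\rho_{y,s}}(y,s)|$; these radii are bounded since $\lambda^{p-2}\rho_{y,s}^{\,n+2}\lesssim \|g\|_{L^1}/\alpha$, so the Vitali $5r$-covering lemma in the metric $d_\lambda$ yields a countable pairwise-disjoint subfamily $\{\widetilde{Q}^\lambda_{\rho_j}(y_j,s_j)\}_j$ whose fivefold dilates cover $\{\mathbf{M}^\lambda g>\alpha\}$. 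Using the $\lambda$-independent doubling property, $|\{\mathbf{M}^\lambda g>\alpha\}|\le\sum_j|\widetilde{Q}^\lambda_{5\rho_j}(y_j,s_j)|\le 5^{n+2}\sum_j|\widetilde{Q}^\lambda_{\rho_j}(y_j,s_j)|\le\frac{5^{n+2}}{\alpha}\sum_j\int_{\widetilde{Q}^\lambda_{\rho_j}(y_j,s_j)}|g|\le\frac{5^{n+2}}{\alpha}\|g\|_{L^1}$, i.e. there is $c_0=c_0(n)$ with $|\{\mathbf{M}^\lambda g>\alpha\}|\le\frac{c_0}{\alpha}\|g\|_{L^1(\mathbb{R}^{n+1})}$.

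Finally I would apply the truncation: write $f=f_1+f_2$ with $f_1=f\,\chi_{\{|f|>\alpha\}}$ and $f_2=f\,\chi_{\{|f|\le\alpha\}}$. Since $|f_2|\le\alpha$ pointwise, every average of $|f_2|$ over a cylinder is at most $\alpha$, so $\mathbf{M}^\lambda f_2\le\alpha$ everywhere; hence $\{\mathbf{M}^\lambda f>2\alpha\}\subset\{\mathbf{M}^\lambda f_1>\alpha\}$, and applying the previous paragraph to $g=f_1\in L^1$ gives $|\{\mathbf{M}^\lambda f>2\alpha\}|\le\frac{c_0}{\alpha}\|f_1\|_{L^1}=\frac{c_0}{\alpha}\int_{\{|f|>\alpha\}}|f|$, which is \eqref{M} with $c=c_0(n)$. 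The only delicate point is that all covering and doubling constants must be uniform in $\lambda$, which is exactly what the rescaling $(x,t)\mapsto(x,\lambda^{2-p}t)$ secures; apart from that, each step is the textbook argument.
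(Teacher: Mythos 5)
Your argument is correct. Note that the paper itself offers no proof of this lemma; it is quoted from \cite[Lemma 2.12]{Byun21}, so there is no in-paper argument to compare against. Your route — identifying $\widetilde{Q}^\lambda_\rho(x,t)$ as the metric ball of radius $\rho$ for $d_\lambda\big((x,t),(y,s)\big)=\max\{|x-y|,(2\lambda^{2-p}|t-s|)^{1/2}\}$, observing the doubling constant $2^{n+2}$ is independent of $\lambda$, applying the Vitali $5r$-covering lemma to obtain a $\lambda$-uniform weak $(1,1)$ bound, and then performing the Calder\'on--Zygmund truncation $f=f\chi_{\{|f|>\alpha\}}+f\chi_{\{|f|\le\alpha\}}$ together with $\mathbf{M}^\lambda\big(f\chi_{\{|f|\le\alpha\}}\big)\le\alpha$ — is the standard textbook proof and each step checks out, including the chain $|\{\mathbf{M}^\lambda f>2\alpha\}|\le|\{\mathbf{M}^\lambda f_1>\alpha\}|\le\tfrac{5^{n+2}}{\alpha}\int_{\{|f|>\alpha\}}|f|$. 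One cosmetic remark: you describe the operator as ``uncentred,'' but both the paper's definition \eqref{Mlamb} and your actual covering argument use the centred maximal function; the wording should be adjusted, though it does not affect the validity of the proof.
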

\begin{proof}[Proof of Theorem \ref{thm1}] 
	For any $q>\frac{n(2-p)}{2}$, we can find $\sigma_1, \kappa_0>0$ such that $q>\sigma_1+\kappa_0$  and $\frac{n(2-p)}{2}<\sigma_1<n+p$. Next, we divide the proof into two steps.  \\ \emph{Step 1}. Assume $\nabla u\in L^{\sigma_1+\kappa_0}(\Omega_T)$. For any $\varepsilon>0$, let $\delta, \delta_1, \Lambda, \lambda_0$ be as defined in Lemma \ref{cover}.
	One has
	\begin{equation}\label{I1I2}
		\begin{split}
			&\int_{\Omega_T}|\nabla u|^{q}=q\Lambda^{q}\int_{0}^{\infty}\alpha^{q-1}\left|\{|\nabla u|>\Lambda\alpha\}\cap\Omega_T\right|d\alpha\\
			&=q\Lambda^{q}\int_{0}^{\alpha_0}\alpha^{q-1}\left|\{|\nabla u|>\Lambda\alpha\}\cap\Omega_T\right|d\alpha+q\Lambda^{q}\int_{\alpha_0}^{\infty}\alpha^{q-1}\left|\{|\nabla u|>\Lambda\alpha\}\cap\Omega_T\right|d\alpha\\
			&=:I_1+I_2,
		\end{split}
	\end{equation}
	where we take \[\alpha_0=\lambda_0^{-1}=(\varepsilon\Lambda^{\sigma_1}/C_0)^{-\frac{1}{\sigma_1+2-p}}\|\nabla u\|_{L^{\sigma_1}(\Omega_T)}^{\frac{\sigma_1}{\sigma_1+2-p}}.\]
	It is easy to check that
	\begin{equation*}
		\begin{split}
			&I_1\leq q\Lambda^{q}\alpha_0^{q-p}\int_{0}^{\infty}\alpha^{p-1}\left|\{|\nabla u|>\Lambda\alpha\}\cap\Omega_T\right|d\alpha\leq q\Lambda^{q-p}\alpha_0^{q-p}\int_{\Omega_T}|\nabla u|^p\\
			&\quad\leq C\varepsilon^{{-\frac{q-p}{\sigma_1+2-p}}}\|\nabla u\|_{L^{\sigma_1}(\Omega_T)}^{\frac{\sigma_1(q-p)}{\sigma_1+2-p}}\int_{\Omega_T}|\nabla u|^p.
		\end{split}
	\end{equation*}
	Note that $p<\sigma_1<q$, by interpolation inequality and Young's inequality we obtain	
	\begin{equation}\label{I1}
		\begin{split}	I_1&\leq C(\varepsilon)\left(\int_{\Omega_T}|\nabla u|^p\right)^{\frac{2-p+q}{2}}+\varepsilon\int_{\Omega_T}|\nabla u|^q\\
			&\overset{\eqref{DuLp}}
			\leq C(\varepsilon)\left(\int_{\Omega_T}|F|^p\right)^{\frac{2-p+q}{2}}+\varepsilon\int_{\Omega_T}|\nabla u|^q.
		\end{split}
	\end{equation}

	Next we deal with $I_2$. First, we estimate \[\left|\{|\nabla u|>\Lambda\alpha\}\cap\Omega_T\right|.\]
	Note that $|\nabla u|(x,t)\leq \mathbf{M}^{\lambda}(|\nabla u|)(x,t)$
	for any $\lambda>0$ and $(x,t)\in \Omega_T$. Let $\lambda=\frac{1}{\alpha}$, we can use Lemma \ref{cover} and \eqref{M} to get
	\begin{equation*}
		\begin{split}
			\left|\{|\nabla u|>\Lambda\alpha\}\cap\Omega_T\right|&\leq \left|\{\mathbf{M}^\lambda(|\nabla u|^{\sigma_1})>(\Lambda\alpha)^{\sigma_1}\}\cap\Omega_T\right|\\&\leq C\varepsilon \left|\{{\bf M}^\lambda( |\nabla u|^{\sigma_1}) >\alpha^{\sigma_1}\}\cap\Omega_T\right|+ \left|\{\mathbf{M}^\lambda
			(|F|^{\sigma_1})>(\delta\alpha)^{\sigma_1}\}\cap\Omega_T\right|\\&\qquad+\left|\{{\bf M}^\lambda (|\nabla u|^{\sigma_1+\kappa_0}) >(\alpha/\varepsilon)^{\sigma_1+\kappa_0}\}\cap\Omega_T\right|\\
			&\leq \frac{C\varepsilon}{\alpha^{\sigma_1}}\int_{\{|\nabla u|>\alpha/2\}\cap\Omega_T}|\nabla u|^{\sigma_1}+\frac{C}{(\delta\alpha)^{\sigma_1}}\int_{\{|F|>\delta\alpha/2\}\cap\Omega_T}|F|^{\sigma_1}\\
			&\qquad+\frac{C\varepsilon^{\sigma_1+\kappa_0}}{\alpha^{\sigma_1+\kappa_0}}\int_{\{|\nabla u|>\alpha/(2\varepsilon)\}\cap\Omega_T}|\nabla u|^{\sigma_1+\kappa_0}.
		\end{split}
	\end{equation*}
	Hence,
	\begin{align*}
		I_2&\leq C\varepsilon\int_{\alpha_0}^{\infty}\alpha^{q-\sigma_1-1}\left[\int_{\{|\nabla u|>\alpha/2\}\cap\Omega_T}|\nabla u|^{\sigma_1}\right]d\alpha+\frac{C}{\delta^{\sigma_1}}\int_{\alpha_0}^{\infty}\alpha^{q-\sigma_1-1}\left[\int_{\{|F|>\alpha\delta/2\}\cap\Omega_T}|F|^{\sigma_1}\right]d\alpha\\&\qquad\qquad+C\varepsilon^{\sigma_1+\kappa_0}\int_{\alpha_0}^{\infty}\alpha^{q-(\sigma_1+\kappa_0)-1}\left[\int_{\{|\nabla u|>\alpha/(2\varepsilon)\}\cap\Omega_T}|\nabla u|^{\sigma_1+\kappa_0}\right]d\alpha.
	\end{align*}
	By Fubini's theorem, we have
	\begin{align*}
		&\int_{\alpha_0}^{\infty}\alpha^{q-\sigma_1-1}\left[\int_{\{|\nabla u|>\alpha/2\}\cap\Omega_T}|\nabla u|^{\sigma_1}\right]d\alpha\leq  \int_{\Omega_T}\left[\int_{0}^{2|\nabla u|}\alpha^{q-\sigma_1-1}d\alpha\right]|\nabla u|^{\sigma_1}\leq  C \int_{\Omega_T} |\nabla u|^{q}.
	\end{align*}
	Similarly, we have
	\begin{align*}
		&\int_{\alpha_0}^{\infty}\alpha^{q-\sigma_1-1}\left[\int_{\{|F|>\alpha\delta/2\}\cap\Omega_T}|F|^{\sigma_1}\right]d\alpha\leq \frac{C}{\delta^{q-\sigma_1}}\int_{\Omega_T}|F|^{q},
	\end{align*}
	and
	\begin{align*}
		&\int_{\alpha_0}^{\infty}\alpha^{q-(\sigma_1+\kappa_0)-1}\left[\int_{\{|\nabla u|>\alpha/(2\varepsilon)\}\cap\Omega_T}|\nabla u|^{\sigma_1+\kappa_0}\right]d\alpha\leq  C \varepsilon^{q-(\sigma_1+\kappa_0)}\int_{\Omega_T} |\nabla u|^{q}.
	\end{align*}
	Thus,
	\begin{align}\label{I2}
		I_2\leq \tilde{C}\varepsilon\int_{\Omega_T} |\nabla u|^{q}+\frac{\tilde{C}}{\delta^{q}}\int_{\Omega_T}|F|^{q}.
	\end{align}
	Plugging \eqref{I1} and \eqref{I2} into \eqref{I1I2} we obtain
	\begin{align*}
		\int_{\Omega_T}|\nabla u|^{q}\leq (\tilde{C}+1)\varepsilon \int_{\Omega_T}|\nabla u|^{q}+\frac{C}{\delta^{q}}\int_{\Omega_T}|F|^{q}+C(\varepsilon)\left(\int_{\Omega_T}|F|^p\right)^{\frac{2-p+q}{2}}.
	\end{align*}
	Let us take $\varepsilon$ small enough to get $(\tilde{C}+1)\varepsilon<\frac{1}{2}$, then we can determine the corresponding $\delta$ and $C(\varepsilon)$. Therefore
	\begin{align*}
		\int_{\Omega_T}|\nabla u|^{q}\leq C\int_{\Omega_T}|F|^{q}+C\left(\int_{\Omega_T}|F|^p\right)^{\frac{2-p+q}{2}},
	\end{align*}
	provided $\nabla u \in L^{\sigma_1+\kappa_0}(\Omega_T)$.\\
	\emph{Step 2}. To complete the proof, we consider the approximate system. For any $0<\kappa<\kappa_1$, let  $u_\kappa$ be a weak solution to 
	\begin{equation*}
		\left\{
		\begin{array}
			[c]{ll}%
			\partial_tu_\kappa -\operatorname{div}(a_\kappa(x,t)|\nabla u_\kappa|^{p-2}\nabla u_\kappa) = \operatorname{div} (|F_\kappa|^{p-2}F_\kappa)&\text{in }\Omega_T,\\
			u_\kappa=0&\text{on}\
			\partial_{p} \Omega_T.
			\\
		\end{array}
		\right.
	\end{equation*}
	where $a_\kappa(x,t)=a(x,t)\star\varphi_\kappa(x)$, $F_\kappa=F(x,t)\star\varphi_\kappa(x)$ and $\varphi_\kappa(x)$ is the standard mollifier, i.e., $\varphi_\kappa(x)=\kappa^{-n}$$\varphi (\frac{x}{\kappa})$ where $\varphi\in C_c^\infty(B_1(0))$ is a nonnegative radial function with $\|\varphi\|_{L^1(\mathbb{R}^n)}=1$. Clearly, $F_\kappa \in L^q(0,T; C^\infty (\Omega))$. Moreover,  $\|F_\kappa\|_{L^m(\Omega_T)}\leq \|F\|_{L^m(\Omega_T)}$ for $m\in \{p,q\}$ and we can take $\kappa_1$ small enough such that $[a_\kappa]_{R_0}\leq 3[a]_{R_0}$. By the classical regularity theory, we have $\nabla u_\kappa\in L^\infty(\Omega_T)$. Apply the result in Step 1 with $[a]_{R_0}<\delta_1/3=:\delta_0$, we get 
	\begin{equation}\label{kap}
		\begin{split}
			\int_{\Omega_T}|\nabla u_\kappa|^{q}&\leq C\int_{\Omega_T}|F_\kappa|^{q}+C\left(\int_{\Omega_T}|F_\kappa|^p\right)^{\frac{2-p+q}{2}}\\& \leq C\int_{\Omega_T}|F|^{q}+C\left(\int_{\Omega_T}|F|^p\right)^{\frac{2-p+q}{2}}.
		\end{split}
	\end{equation}
	By uniqueness of the problem \eqref{e1} in $C([0,T], L^2(\Omega))\cap L^p(0,T; W^{1,p}(\Omega))$, we  have 
	\begin{equation*}
		\nabla u_\kappa \to \nabla u
	\end{equation*}
	strongly in $L^p(\Omega_T)$ as $\kappa \to 0$. Hence there exists a subsequence, which can be still denoted by $\{u_\kappa\}$, such that $\nabla u_\kappa \to \nabla u$ a.e. as $\kappa\to 0$. Then, letting $\kappa\to 0$ in \eqref{kap} and using Fatou's lemma, one has 
	\begin{align*}
		\int_{\Omega_T}|\nabla u|^{q} \leq C\int_{\Omega_T}|F|^{q}+C\left(\int_{\Omega_T}|F|^p\right)^{\frac{2-p+q}{2}}.
	\end{align*}
	We complete the proof.
\end{proof}
\begin{remark}\label{repl}
	If $p>\frac{2n}{n+2}$, we can follow the above proof with $\sigma_1=p$ and $\kappa_0=0$, which finally leads to $$
		\int_{\Omega_T}|\nabla u|^{q}\leq C\int_{\Omega_T}|F|^{q}+C\left(\int_{\Omega_T}|F|^p\right)^{\frac{2-p+q}{2}},
	$$
	for any $q\geq p$.
\end{remark}
\textbf{Acknowledgments:} Quoc-Hung Nguyen is  supported by  Academy of Mathematics and Systems Science, Chinese Academy of Sciences startup fund and the National Natural Science Foundation of China (12050410257).
Na Zhao is supported by the Shanghai University of Finance and Economics startup fund.

\end{document}